\documentclass[12pt]{amsart}

\usepackage{amsmath, amsopn, amssymb}

\usepackage{a4wide,xspace,natbib,color}

\usepackage{latexsym, amsbsy, epsfig, enumerate}

\usepackage{hyperref}



\usepackage{mathsym}


\newcommand{\suman}{\ensuremath{\sum_{i=1}^{n}}}
\newcommand{\E}{\ensuremath{\mathsf{E}}}
\newcommand{\Es}{\ensuremath{\mathsf{E}\,}}
\newcommand{\EX}{\ensuremath{\mathsf{E}_{\Xb}}}
\newcommand{\var}{\ensuremath{\mathsf{var}}}
\newcommand{\ind}{\ensuremath{\mathbf{1}}}
\newcommand{\Fhat}{\ensuremath{\widehat{F}}}
\newcommand{\Uhat}{\ensuremath{\widehat{U}}\xspace}
\newcommand{\Utilde}{\ensuremath{\widetilde{U}}\xspace}
\newcommand{\Ubhat}{\ensuremath{\widehat{\Ub}}\xspace}
\newcommand{\Ubtilde}{\ensuremath{\widetilde{\Ub}}\xspace}
\newcommand{\Hx}{\ensuremath{H_{\xb}}\xspace}
\newcommand{\hateps}{\ensuremath{\widehat{\eps}}\xspace}

\newcommand{\skorb}{\boldsymbol{\psi}}

\renewcommand{\Pr}{\ensuremath{\mathsf{P}}\xspace}

\newcommand{\epsb}{\ensuremath{\boldsymbol{\eps}}}

\newcommand{\tbn}{\ensuremath{\tb^{(n)}}}

\newcommand{\hatthetab}{\ensuremath{\boldsymbol{\widehat{\theta}}}}

\newcommand{\hatF}{\ensuremath{\widehat{F}}}


\DeclareRobustCommand{\inDist}{\ensuremath{\xrightarrow[n\rightarrow\infty]{d}}}
\DeclareRobustCommand{\inPr}{\ensuremath{\xrightarrow[n\rightarrow\infty]{\mathrm{P}}}}

\newtheorem{theorem}{Theorem}
\newtheorem{corollary}{Corollary}
\newtheorem{lemma}{Lemma}

\theoremstyle{remark}
\newtheorem{remark}{Remark}
\theoremstyle{definition}

\newtheorem*{definition*}{Definition}

\newtheorem{assumpC}{}


\def\boxit#1{\vbox{\hrule\hbox{\vrule\kern6pt
          \vbox{\kern6pt#1\kern6pt}\kern6pt\vrule}\hrule}}

\begin{document}

\vspace*{-1 cm}

\noindent
\title[MPLE in copula semiparametric models]{Maximum pseudo-likelihood estimation based on estimated residuals in copula semiparametric models}
\author{Marek  Omelka$^{1}$, \v{S}\'{a}rka Hudecov\'{a}$^{1}$, Natalie Neumeyer$^2$}

\maketitle

\begin{center}
$^1$ 
Department of Probability and Statistics,
Faculty of Mathematics and Physics,  
Charles University, 
Sokolovsk\'a 83,
186\,75 Praha 8, Czech Republic
\\
$^2$
Department of Mathematics, University of Hamburg, Bundesstrasse 55, 20146 Hamburg, Germany \\
\end{center}
\vspace*{0.6 cm}

\maketitle


\bigskip
\begin{center}
\textcolor{red}{\today}
\end{center}

\begin{abstract}
This paper deals with a situation when one is interested in 
the dependence structure of a multidimensional response variable in the 
presence of a multivariate covariate. It is assumed that the covariate affects only 
the marginal distributions through regression models 
while the dependence structure, which is described 
by a copula, is unaffected.
A parametric estimation of the copula function is considered with 
focus on the maximum pseudo-likelihood method.
It is proved
that under some appropriate regularity assumptions the estimator 
calculated from the residuals is asymptotically equivalent to the estimator 
based on the unobserved errors.  In such case  one can  ignore the fact 
that the response is first adjusted for the effect of the covariate. A Monte Carlo simulation 
study explores (among others) situations where the regularity assumptions 
are not satisfied and the claimed result does not hold. It shows that  in such situations 
the maximum pseudo-likelihood estimator may behave poorly and 
the moment estimation of the copula parameter is of interest.   Our results complement the  results available for nonparametric  estimation of the copula function.
\end{abstract}
%
\noindent {\it Keywords and phrases}: 
 asymptotic normality, copula, moment estimation, pseudo-likelihood, residuals. 

%

\date{\today}
\maketitle

\thispagestyle{empty}

\section{Introduction}


Consider a $d$-dimensional vector $\Yb = (Y_1, \ldots , Y_d)\tr$ of responses and an associated 
$q$-dimensional vector of the covariates $\Xb = (X_1, \ldots,X_{q})\tr$. 
For instance in insurance applications 
one can consider that the response represents various type of payments related to a given 
car accident (medical benefits, income replacement benefits, and allocated expenses for a claimant) 
and the covariates present some additional information (claimant’s age, gravity of accident, 
number of people injured in the accident, \ldots). 

Often we are interested in the conditional distribution of $\Yb$ given the value 
of the covariate. To simplify the situation it is often assumed 
that $\Xb$ affects only the marginal 
distributions of~$Y_j\ (j=1,\dotsc,d)$,  but does not affect the dependence structure of~$\Yb$. 
More formally, it is assumed that there exists a copula $C$ such that the joint conditional distribution 
of $\Yb$ given $\Xb = \xb$ can be for all $\xb \in S_{\Xb}$ (the support 
of $\Xb$) written as 
\begin{equation*}
\Hx(y_{1}, \ldots, y_{d}) = \Pr (Y_{1} \leq y_{1}, \ldots, Y_{d} \leq y_{d} \mid \Xb = \xb ) = C \big(F_{1\xb}(y_1),\ldots,F_{d\xb}(y_d)\big)
\end{equation*}
where $F_{j\xb}(y_j)=\Pr (Y_{j} \leq y_{j}\mid \Xb = \xb)$, $j=1,\dotsc,d$. 
Using this assumption one can proceed in two steps. In the first step one models the effect 
of the covariate on each of the marginal distributions separately (i.e.\ estimating $F_{j\xb}$ 
for each $j \in \{1,\dotsc,d\}$ separately). Having $\Fhat_{j\xb}$ one estimates 
the copula function~$C$ in the second step. 


Nonparametric estimation of the copula function~$C$ (for $d=2$ and $q=1$) was in detail 
considered in \cite{ogv_sjs_2015}. The most interesting result is as follows. 
Suppose that the marginal distributions follow the parametric or even non-parametric 
location scale models, i.e. 
\begin{equation} \label{eq: location-scale model}
 Y_{j} = m_{j}(X) + s_{j}(X)\eps_{j},  \quad 
 \text{where } \eps_{j} \text{ is independent with } X.   
\end{equation}
Note that then $C$ is the copula function corresponding to the 
random vector $(\eps_{1},\eps_{2})\tr$. Then 
\cite{ogv_sjs_2015} proved that (under some regularity assumptions) 
the empirical copula~$\widehat{C}_{n}$ based on the estimated residuals 
from model \eqref{eq: location-scale model} 
is asymptotically equivalent to the empirical copula~$\widetilde{C}_{n}$ calculated 
from the unobserved errors~$\eps_{ji}$. More precisely it was proved that 
\begin{equation} \label{eq: oracle property of Cnhat}
 \sup_{(u_1,u_2) \in [0,1]^2}\sqrt{n}\,\big|\widehat{C}_{n}(u_1, u_2) - \widetilde{C}_{n}(u_1, u_2)\big| 
 = o_{P}(1). 
\end{equation}
This result was generalized to time-series setting by \cite{noh_copula_nts_2019}. 
In \cite{portier2018weak} the authors were even able to drop 
the location-scale assumption~\eqref{eq: location-scale model} but at 
the cost of deriving only a slightly weaker result
(the supremum in~\eqref{eq: oracle property of Cnhat} 
is replaced with $\sup_{[\gamma,1-\gamma]^{2}}$ where $\gamma$ can be 
taken arbitrarily small but positive).   
On the other hand  \cite{cote_genest_omelka_2019} concentrated on 
the parametric form of the location scale model 
\eqref{eq: location-scale model} and generalized the results to $d > 2$, $q > 1$ 
and at the same time relaxed assumptions on~$f_{j\eps}$ (the density of $\eps_{ji}$). 

To complement the results on nonparametric estimation of~$C$ one is naturally interested 
if analogous results hold also for parametric estimation of~$C$. More precisely 
suppose that the copula function~$C$ belongs to the family $\mathcal{C} = \big\{C(\cdot; \ab) : \ab \in \Theta \big\}$ 
and we are interested in estimating the unknown parameter. 
Denote $\alfab$ the true value of the parameter,  
$\widehat{\alfab}_{n}$ the estimator based on the residuals ($\widehat{\eps}_{ji}$) 
and $\widetilde{\alfab}_{n}$ its counterpart based on the true 
(but unobserved) errors ($\eps_{ji}$) from the location-scale model (\ref{eq: location-scale model}). Then in analogy to 
\eqref{eq: oracle property of Cnhat} one would expect that 
$\widehat{\alfab}_{n}$ is (the first-order) asymptotically equivalent to $\widetilde{\alfab}_{n}$, 
i.e. 
\begin{equation} \label{eq: equiv of estim of parameters}
  \sqrt{n}\, \big(\widehat{\alfab}_{n} - \widetilde{\alfab}_{n}  \big) = o_{P}(1). 
\end{equation}
Although the conjecture \eqref{eq: equiv of estim of parameters} seems to be natural, 
to the best of our knowledge there are only limited results specifying the regularity assumptions 
that are needed so that \eqref{eq: equiv of estim of parameters} holds. Some results for the moment-like 
estimators that can be deduced from the convergence of the empirical copula $\widehat{C}_{n}$  
can be found in \cite{noh_copula_nts_2019} and \cite{cote_genest_omelka_2019}. 

In this paper \citep[similarly as in][]{cote_genest_omelka_2019} we 
assume the parametric form of the location-scale model \eqref{eq: location-scale model} 
and concentrate on \textbf{maximum pseudo-likelihood estimation}. This method 
of estimation was in the context of copula models popularised by \cite{genest_et_al_1995} and in more detail investigated in \cite{tsukahara_2005}. 
This method is often preferred to moment-like estimation because 
the resulting estimator has usually a lower asymptotic variance. 

In the econometric (time-series) literature the inference based on 
the residuals is also known as univariate (marginal) filtering 
\citep[see e.g.,][]{bucher2015nonparametric} and the 
result~\eqref{eq: equiv of estim of parameters} is supported 
by many simulation studies. The result is formulated already  
in \cite{chen2006estimation} but there it is presented more on an intuitive level and the precise assumptions (as well as reasoning) are missing. This lack of 
of rigorousness were to some extent  redeemed in the subsequent paper \cite{chan2009statistical} where the authors concentrated 
on the multivariate GARCH-models and presented a lot of interesting ideas how 
to deal with the technical difficulties. But a careful reading of the paper 
reveals that (probably due to the broad scope of the presented results) some of the 
crucial steps in the proofs are missing. 

In our paper we will explore in detail the assumptions that are needed so that 
\eqref{eq: equiv of estim of parameters} holds in the standard i.i.d. setting. 
Even in this relatively simply setting one has to handle many 
technical difficulties. The thing is that it is not 
clear how to make use to of the recent deep results in empirical copula estimation \citep[see e.g.,][]{berghaus2017weak,radulovic2017weak} 
as the densities of many standard copulas 
are unbounded. The only remarkable exception in this aspect is Theorem~3.3 
of \citep{berghaus2017weak}, but the authors 
considered only two dimensional copulas and no covariates. 

We show that although the assumptions that guarantess 
\eqref{eq: equiv of estim of parameters} are mild, they are not satisfied 
for some combinations of commonly used copula functions and marginal densities.
 Roughly speaking 
we illustrate that an unbounded copula density has to be compensated with marginal 
densities that are well behaved not only in the supports of the corresponding distributions, 
but also at the border points of the supports. 
We are convinced that exploring this problem  in this settings is not only of independence 
interest, but it provides also insights to understand what might go wrong when switching 
to more complicated econometric or time-series models (see also the discussion in Section~\ref{sec: discussion}). 

The paper is organised as follows. The main result and the needed assumptions are 
formulated in Section~\ref{sec: main results}.  The theoretical results are illustrated 
in a simulation study in Section~\ref{sec: sim study}.  
All the proofs are given in the Appendices.

\section{Main result} \label{sec: main results}
In what follows we assume that 
for each $j \in \{1,\dotsc,d\}$  there exists 
a \textbf{known} transformation $T_j$ 
increasing on the support of $Y_j$ and \textbf{known} functions $m_{j}(\xb;\thetab_{j})$ 
and $s_{j}(\xb; \thetab_j)$ depending only on an unknown (finite-dimensional) parameter~$\thetab_{j}$ 
such that the random variable 
\begin{equation*} 
 \eps_{j} = \frac{T_{j}(Y_j) -m_{j}(\Xb; \thetab_{j})}{s_{j}(\Xb; \thetab_{j})},  
\end{equation*}
is independent of $\Xb$ with cumulative distribution function $F_{j\varepsilon}$. The distribution of 
the random vector $\epsb = (\eps_1,\dotsc,\eps_d)\tr$ has continuous margins and the 
copula corresponding to  $\epsb$  belongs to the 
families of copulas $\mathcal{C} = \big\{C(\cdot; \ab) : \ab \in \Theta \big\}$ 
and $\Theta \subset \RR^{p}$. 

Our task is to estimate the true value of the copula parameter (say $\alfab$) 
based on the observations $\binom{\Yb_1}{\Xb_1},\dotsc,\binom{\Yb_n}{\Xb_n}$ 
that are assumed to be mutually independent copies of the vector $\binom{\Yb}{\Xb}$.


Let $\Yb_{i} = (Y_{1i},\dotsc,Y_{di})\tr$. As the parameters $\thetab_j$ ($j\in \{1,\dotsc,d\}$)
are in practice unknown, we work with the residuals 
\[
 \widehat{\eps}_{ji} = \frac{T_{j}(Y_{ji}) - m_{j}(\Xb_{i}; \widehat{\thetab}_{j})}{s_{j}(\Xb_{i}; \widehat{\thetab}_{j})}, 
 \quad i=1,\dotsc,n; \ j=1,\dotsc,d,
\]

where $\widehat{\thetab}_j$ is a suitable estimate of ${\thetab}_j$.
%
%
For $j \in \{1,\dotsc,d\}$ let $\hatF_{j\hateps}$ be the marginal empirical distribution function 
of the estimated residuals, i.e. 
\[
 \hatF_{j\hateps}(y) = \frac{1}{n} \suman \ind\{\hateps_{ji} \leq y\}.  
\]

Then the \textbf{maximum pseudo-likelihood estimator} based on the 
residuals is defined as  
%
\[
 \widehat{\alfab}_{n} = \argmax_{\ab \in \Theta} \suman 
 \log \big\{c\big(\Ubhat_{i}; \ab\big)\big\},  
\]
%
where 
\begin{equation} \label{eq: estimated pseudoobservations}
 \Ubhat_{i} = \big(\Uhat_{1i},\dotsc,\Uhat_{di} \big)^{\top} 
  = \tfrac{n}{n+1}\big(\widehat{F}_{1\hateps}(\hateps_{1i}),\dotsc, \widehat{F}_{d\hateps}(\hateps_{di})\big)^{\top}  
\end{equation}
are the estimated pseudo-observations and $c(\ub;\ab)$ is the density of the 
assumed copula family. As it is common in the maximum likelihood theory 
we will consider the estimator $\widehat{\alfab}_{n}$ to be an appropriately chosen 
root of the estimating equations 
\begin{equation} \label{eq: MPL based on residuals}
 \suman \skorb(\Ubhat_{i}; \widehat{\alfab}_{n}) = \mathbf{0}_{p}, 
 \quad \text{where} \quad \skorb(\ub;\ab) = \frac{\partial \log \{c\big(\ub; \ab\big)\}}{\partial \ab}.  
\end{equation} 
Analogously let $\widetilde{\alfab}_{n}$ be the corresponding estimator based on 
the true (but unobserved) errors $\eps_{ji}$. I.e.\ $\widetilde{\alfab}_{n}$ is defined as (an appropriately chosen) 
root of the estimating equations 
\begin{equation} \label{eq: MPL based on errors}
 \suman \skorb(\Ubtilde_{i}; \widetilde{\alfab}_{n}) = \mathbf{0}_{p},
\end{equation} 
where 
\begin{equation} \label{eq: true pseudoobservations}
 \Ubtilde_{i} = \big(\Utilde_{1i},\dotsc,\Utilde_{di} \big)^{\top} 
  = \tfrac{n}{n+1}\big(\widehat{F}_{1\eps}(\eps_{1i}),\dotsc, \widehat{F}_{d\eps}(\eps_{di})\big)^{\top}    
\end{equation}
and $\hatF_{j\eps}$ is the marginal empirical distribution function of the 
(unobserved) errors, i.e. 
\[
 \hatF_{j\eps}(y) = \frac{1}{n} \suman \ind\{\eps_{ji} \leq y\}, 
 \quad j=1,\dotsc,d. 
\]

\subsection{Regularity assumptions on the marginal distributions}

In general we need to assume that the density of the error term~$\eps_{j}$  
should be `well-behaved' on the border of its support. The 
following assumption is close  to assumption F(iii) in  
Appendix~A of \cite{einmahl2008specification}. But our assumption 
is weaker as it allows for distributions with supports 
different from a real line. 



\noindent \textbf{Assumption} $(\mathbf{F}_{j\eps})$:  
For each $j \in \{1,\dotsc,d\}$ the density function $f_{j\eps}$ of $\eps_{j}$ is continuous on the support of~$\eps_{j}$
and there exists $\beta \in [0,\frac{1}{2})$ such that 
\begin{equation} \label{eq: fjeps at quantile} 
 \sup_{u \in (0,1)}\frac{f_{j\eps}\big(F_{j\eps}^{-1}(u)\big)\big(1 + |F_{j\eps}^{-1}(u)|\big)}{u^{\beta}(1-u)^{\beta}} < \infty 
\end{equation}
and 
\begin{equation*} 
 \sup_{u \in (0,1/2)}\frac{f_{j\eps}\big(F_{j\eps}^{-1}(2u)\big)}{f_{j\eps}\big(F_{j\eps}^{-1}(u)\big)} < \infty
\quad \text{ and } \quad 
 \sup_{u \in (1/2,1)}\frac{f_{j\eps}\big(F_{j\eps}^{-1}(1-2u)\big)}{f_{j\eps}\big(F_{j\eps}^{-1}(1-u)\big)} < \infty. 
\end{equation*}
%
Further for some $u_1$, $u_2$ in $(0,1)$ the function  $f_{j\eps}\big(F_{j\eps}^{-1}(u)\big)$ is non-decreasing on $(0,u_{1})$ and non-increasing on $(u_2,1)$.   
\medskip 


Note that assumption $(\mathbf{F}_{j\eps})$ with $\beta = 0$
allows also  for distributions with non-continuous but bounded densities 
(e.g.\ exponential and uniform). 
But as we 
show later, for copula families with 
unbounded densities one needs to assume that $\beta > 0$. 

\begin{remark}\label{remark1}
 The assumption $(\mathbf{F}_{j\eps})$ is formulated so that it covers the general 
case when both the conditional mean as well as the conditional variance 
of $T_{j}(Y_{ji})$ depends on $\Xb_{i}$. From the proofs given in the appendix 
it follows that if one rightly assumes that the conditional variance does not depend on $\Xb_{i}$, 
then one does only location adjustment (i.e.\ $\widehat{\eps}_{ji} = T_{j}(Y_{ji}) - m_{j}(\Xb_{i}; \widehat{\thetab}_{j})$) and assumption \eqref{eq: fjeps at quantile} simplifies to 
\begin{equation*} 
 \sup_{u \in (0,1)}\frac{f_{j\eps}\big(F_{j\eps}^{-1}(u)\big)}{u^{\beta}(1-u)^{\beta}} < \infty.  
\end{equation*}
On the other hand if one rightly assumes that the conditional mean is zero 
then one does only scale adjustment (i.e.\ $\widehat{\eps}_{ji} = \frac{T_{j}(Y_{ji})}{s_{j}(\Xb_{i}; \widehat{\thetab}_{j})}$) and it is sufficient to assume 
\begin{equation*} 
 \sup_{u \in (0,1)}\frac{f_{j\eps}\big(F_{j\eps}^{-1}(u)\big)\,\big|F_{j\eps}^{-1}(u)\big|}{u^{\beta}(1-u)^{\beta}} < \infty.  
\end{equation*}
This last assumption is close to the assumption 2. formulated just before Theorem~2.1 
of \cite{chan2009statistical}. But similarly as when comparing with 
assumption F(iii) in  Appendix~A of \cite{einmahl2008specification}, 
our assumption does not require that the support of the distribution is a real line. 
\end{remark}

\begin{remark} \label{remark about beta equal to zero}
%
As in assumption $(\mathbf{F}_{j\eps})$ 
the function $f_{j\eps}\big(F_{j\eps}^{-1}(u)\big)$ is supposed to be monotone when 
$u$ is close to zero or close to one, then the integrability of $f_{j\eps}$ 
(see Lemma~\ref{lemma density small at infinity}) implies that 
\[
 \lim_{|x| \to \infty} |x|f_{j\eps}(x) = 0. 
\]
%

Thus if
\begin{equation} \label{eq: infinite quantiles}
 \lim_{u \to 0_{+}} F_{j\eps}^{-1}(u) = -\infty
 \quad \Big(\; \lim_{u \to 1_{-}} F_{j\eps}^{-1}(u) = \infty \;\Big),   
\end{equation}
then one gets 
\begin{equation} \label{eq: fjeps diminishing at borders}
 \lim_{u \to 0_{+}} f_{j\eps}\big(F_{j\eps}^{-1}(u)\big)\big(1+|F_{j\eps}^{-1}(u)|\big) = 0 
 \quad \Big(\; \lim_{u \to 1_{-}} f_{j\eps}\big(F_{j\eps}^{-1}(u)\big)\big(1+|F_{j\eps}^{-1}(u)|\big)= 0\;\Big).   
\end{equation}
Note that the above equations are also automatically satisfied if $\beta > 0$
even if \eqref{eq: infinite quantiles} does not hold. Thus one can conclude that 
if \eqref{eq: fjeps diminishing at borders} does not hold, then  $\beta = 0$ 
and the corresponding border of the support is finite, i.e., 
\[
 \lim_{u \to 0_{+}} F_{j\eps}^{-1}(u) > -\infty 
 \quad \Big(\; \lim_{u \to 1_{-}} F_{j\eps}^{-1}(u) < \infty \;\Big).  
\]
%
%
\end{remark}


\medskip 


\subsection{Regularity assumptions on \texorpdfstring{$m_{j}$ and $s_{j}$}{mj and sj}}

The next assumption states that the parametric models can be estimated 
at the standard $\sqrt{n}$-rate and that the location and scale functions 
are sufficiently smooth and integrable.  

\medskip 

\noindent \textbf{Assumption} $\boldsymbol{(ms)}$:  
For each $j \in \{1,\dotsc,d\}$ 
 $\widehat{\thetab}_{j}$ is a $\sqrt{n}$-consistent estimate
of the parameter~$\thetab_j \in \RR^{p_{j}}$. 
The functions  
$m_{j}(\xb;\tb)$ and $s_j(\xb;\tb)$ are (once) differentiable 
with respect to~$\tb$ and  the derivatives are denoted as $m'_{j}(\xb;\tb)$ and $s'_j(\xb;\tb)$.  
Further there exists a neighborhood~$U(\thetab_{j})$  
of the true value of the parameter~$\thetab_{j}$   
such that $\inf_{\xb \in S_{\Xb}, \tb \in U(\thetab_{j})} s_j(\xb;\tb) > 0$ 
and there exists a function $M_{j}: S_{\Xb} \to \RR$ such that  
for each $\xb \in S_{\Xb}$: 
\[
 \sup_{\tb \in U(\thetab_{j})} \big\|\tfrac{m'_j(\xb;\tb)}{s_j(\xb;\tb)}\big\| \leq M_{j}(\xb), 
\qquad 
 \sup_{\tb \in U(\thetab_{j})} \big\|\tfrac{s'_j(\xb;\tb)}{s_j(\xb;\tb)}\big\| \leq M_{j}(\xb), 
\]
and  $ \E \big[M_{j}(\Xb)\big]^{r} < \infty$ for some $r \geq 2$. 
%
Finally, for each $K > 0$ the derivatives $m'_j(\xb;\tb)$ 
and $s'_j(\xb;\tb)$ viewed as functions of $\tb$ 
are continuous at $\thetab_{j}$ uniformly in 
$\xb \in \{\tilde{\xb} \in S_{\Xb}: \|\tilde{\xb}\| \leq K\}$.

\subsection{\texorpdfstring{Regularity assumptions about the copula family~$\mathcal{C}$}{Regularity assumptions about the copula family}}

To formulate the main regularity assumptions about the copula family 
it is useful to introduce the following 
set of functions. 

\begin{definition*}[Class of $\mathcal{J}$- and 
$\widetilde{\mathcal{J}}^{\beta_1, \beta_2}$-functions] 
A~function $\varphi:(0,1)^{d} \to \RR$ is called a~\emph{$\mathcal{J}$-function} 
if $\varphi$ is continuous on $(0,1)^d$ and there exist $\eta \in [0,1)$ and a finite 
constant $M_{1}$ such that for all $\ub \in (0,1)^{d}$
\begin{equation*} 
\left|\varphi(u_1,\dotsc,u_d)\right| \leq \sum_{j=1}^{d}\frac{M_1}{\big[{\min\{u_j, 1-u_j\}}\big]^{\eta}}\,. 
\end{equation*}

Let $\beta_1 \in [0, 1/2)$ and $\beta_{2} \geq 0$ be fixed. We say that 
a~function $\varphi:(0,1)^{d} \to \RR$ is a~\emph{$\widetilde{\mathcal{J}}^{\beta_1, \beta_2}$-function} if it is continuous on $(0,1)^{d}$ and there exists a finite constant $M_{2}$ such that for all $\ub \in (0,1)^{d}$
\begin{equation*}
\left|\varphi(u_1,\dotsc,u_d)\right| \leq 
\sum_{j=1}^{d}\frac{M_2}{\big[{\min\{u_{j},1-u_{j}\}}\big]^{\beta_{1}}}\, .    
\end{equation*}
Further $\left|\varphi^{(j)}(u_1,\dotsc,u_d)\right|\,u_{j}^{\beta_{2}}(1-u_{j})^{\beta_{2}}$ is a $\mathcal{J}$-function 
for all $j \in \{1,\dotsc,d\}$, where 
\[
 \varphi^{(j)}(u_1,\dotsc,u_d) = \frac{\partial \varphi(u_1,\dotsc,u_d)}{\partial u_j}.  
\]
\end{definition*}

Now we are ready to formulate the needed regularity assumptions about the copula family. 
Recall that $\Theta \subset \RR^{p}$, $\alfab$ is the true value of the parameter, and  $c(\ub;\ab)$ is a density corresponding to the copula 
function $C(\ub;\ab)$. 

\bigskip

\noindent \textbf{Assumptions C:} 
\nopagebreak 
\vspace{-2mm}
\begin{assumpC}\label{assump:identifiability}
$c(\ub;\ab_1) = c(\ub;\ab_2)$ for almost all $\ub \in (0,1)^{d}$ 
only if $\ab_1 = \ab_2$.  
\end{assumpC}

\begin{assumpC}\label{assump:smoothness rho}
 The function 
$\log\{c(\ub; \ab)\}$ 
 is continuously differentiable with respect to $\ab$ 
 for all $\ub \in (0,1)^{d}$.
\end{assumpC}

\noindent 
Denote the $k$th element 
of the vector function $\skorb(\ub;\ab) = {\partial \log \{ c(\ub; \ab)\}}/{\partial \ab} $ by $\psi_{k}(\ub;\ab)$.

\begin{assumpC}\label{assump:snice}
 For each $k \in \{ 1,\dotsc,p \}$, the function $\psi_{k}(\cdot;\alfab) \in \widetilde{\mathcal{J}}^{\beta_1,\beta_2}$, where $\beta > \max\{\beta_1 + \tfrac{1}{r-1}, \beta_{2}\}$, 
for $\beta$ introduced in  
assumption~$(\mathbf{F}_{j\eps})$ and $r$ in assumption~$\boldsymbol{(ms)}$.  
\end{assumpC}



\begin{assumpC}\label{assump:differentiability}
The function $\skorb(\ub; \ab)$ is assumed to be continuously differentiable
with respect to~$\ab$ for all $\ub \in (0,1)^{d}$.  
Further there exist an  open neighborhood~$\mathcal{U} \subset \Theta$ 
of $\alfab$ and  a dominating function~$h(\ub) \in \mathcal{J}$ such that
 $\partial\skorb(\ub; \ab)/\partial\ab\tr$ is continuous
 in $(0,1)^d \times \mathcal{U}$  and 
\[
 \max_{k,\ell \in \{1,\dotsc,p\}}\, \sup_{\ab \in \mathcal{U}}
  \big|\tfrac{\partial \psi_{k}(\ub; \ab)}{\partial a_{\ell}}\big|
 \leq h(\ub).
\]
\end{assumpC}

\begin{assumpC}\label{assump:invertibility}
The $p\times p$ (Fisher information) matrix 
$I(\alfab) 
 = -\Es \big\{ \partial\skorb(\Ub;\ab)/{\partial\ab \tr}\big|_{\ab = \alfab}\big\}$, where 
 \begin{equation*} 
 \Ub = \big(U_{1},\dotsc,U_{d}\big) \tr 
 = \big(F_{1\eps}(\eps_{1}),\dotsc,F_{d\eps}(\eps_{d})) \tr,   
 \end{equation*}
 is finite and nonsingular.
\end{assumpC}

\begin{remark} \label{remark about beta1 and beta2}
Note that the score functions of the commonly used one-parameter 
bivariate copula families with unbounded densities (e.g.\ Clayton, Gumbel, Normal, Student, \ldots) 
can be bounded by 
\[
 |\psi(u_1,u_2; a)| \leq  M_{3} \sum_{j=1}^{2} \big|\log(u_{j})+\log(1-u_{j})\big| 
\]
and its derivative as 
\[
 |\psi^{(j)}(u_1,u_2; a)| \leq  \frac{M_3}{\big[{\min\{u_j, 1-u_j\}}\big]} 
  + M_{3}\sum_{j'=1}^{2} \big|\log(u_{j'})+\log(1-u_{j'})\big|, \quad j=1,2 
\]
for a sufficiently large but finite constant~$M_{3}$ \citep[see also][]{chen2006estimation_ts}. 
Thus in Assumption~\ref{assump:snice} one can consider  $\beta_1$ and $\beta_2$ arbitrarily 
close to zero but positive. 
%
 

%
%
\end{remark}

Assumption~\ref{assump:snice} is inspired by  \cite{chan2009statistical}. 
Note that generally speaking this assumption is more strict 
than the corresponding assumptions of \cite{tsukahara_2005} 
that are based on $U$-shaped functions. The advantage 
of assumption~\ref{assump:snice} is that 
it enables to derive bounds that depend only on the marginal 
distributions. The price that we pay for this advantage does 
not seem to be big because we are not aware of a standard 
copula family that does not meet \ref{assump:snice} with 
$\beta_{1}$ and $\beta_{2}$ arbitrarily small positive constants. 

\medskip 

Note that assumption~\ref{assump:snice} implies that $\beta > 0$, which does not allow 
for marginal densities~$f_{j\eps}$ that are bounded but possibly 
discontinuous at a border point (e.g.\ exponential 
or uniform distributions). As shown in simulations in Section~\ref{sec: sim study} 
the aimed result~\eqref{eq: equiv of estim of parameters} indeed does not hold 
in general when the marginal densities~$f_{j\eps}$ are not continuous. 

Nevertheless a  closer inspection of the proof shows that 
$\beta >0$ is 
needed to get a control over a possibly unbounded 
score function $\skorb(\ub;\ab)$.  But there are commonly used copula 
families (e.g.\ Frank, Ali-Mikhail-Haq, Plackett) for which  
the score function $\skorb(\ub;\ab)$ and its derivatives are bounded. It is of interest  
to formulate an alternative to assumptions~\ref{assump:snice} and \ref{assump:differentiability}  separately as it allows for 
$\beta = 0$ in assumption~$(\mathbf{F}_{j\eps})$,  

\begin{assumpC}\label{assump:very nice}  
The function $\skorb(\ub; \ab)$ is bounded and continuously differentiable
with respect to~$\ab$ for all $\ub \in (0,1)^{d}$.  
Further there exists an open neighborhood~$\mathcal{U}$
of $\alfab$ such that
 $\partial\skorb(\ub; \ab)/\partial\ab\tr$ is continuous
 in $(0,1)^d \times \mathcal{U}$  and 
\[
 \max_{k,\ell \in \{1,\dotsc,p\}}\, \sup_{\ab \in \mathcal{U}}
  \sup_{\ub \in (0,1)^d}\big|\tfrac{\partial \psi_{k}(\ub; \ab)}{\partial a_{\ell}}\big|
 < \infty 
\quad \text{and} \quad 
 \max_{j \in \{1,\dotsc,d\}} \max_{k \in \{1,\dotsc,p\}}\,  \sup_{\ub \in (0,1)^d}\big|\tfrac{\partial \psi_{k}(\ub; \alfab)}{\partial u_{j}}\big|
 < \infty.  
\]
\end{assumpC}




\smallskip

\subsection{Main results}

Now we are ready to formulate the main results of the paper. 


\begin{theorem} \label{thm equiv of MPL estim}
 Suppose that assumptions  $\boldsymbol{(ms)}$,  
\ref{assump:identifiability}-\ref{assump:invertibility} 
and $(\mathbf{F}_{j\eps})$ with $\beta > 0$ are satisfied. 
Then with probability going to one there exist consistent roots 
(say $\widehat{\alfab}_{n}$ and $\widetilde{\alfab}_{n}$)
of the estimating equations~\eqref{eq: MPL based on residuals} 
and~\eqref{eq: MPL based on errors}. 
Further $\widehat{\alfab}_{n}$ and $\widetilde{\alfab}_{n}$ 
satisfy \eqref{eq: equiv of estim of parameters}. %
\end{theorem}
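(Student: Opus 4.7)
The plan is the standard two-step Z-estimator argument: first establish existence and consistency of both roots, and then Taylor-expand the two estimating equations around $\alfab$, reducing the proof to a single remainder bound that captures the effect of replacing the unobserved errors by the residuals. The hard work concentrates entirely in that remainder, which must be shown to be $o_P(1)$.

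For consistency, I would follow the lines of \cite{genest_et_al_1995} and \cite{tsukahara_2005} for the error-based estimator $\widetilde{\alfab}_n$, using assumptions~\ref{assump:identifiability}, \ref{assump:smoothness rho}, \ref{assump:differentiability} and \ref{assump:invertibility} together with the $\mathcal{J}$-domination to obtain uniform convergence of the score on a neighbourhood of $\alfab$; standard Z-estimator theory then produces a consistent root with probability going to one. For the residual-based estimator $\widehat{\alfab}_n$ I would piggyback on this by proving
\begin{equation*}
\sup_{\ab \in \mathcal{U}}\Big\|\tfrac{1}{n}\suman \big[\skorb(\Ubhat_i;\ab) - \skorb(\Ubtilde_i;\ab)\big]\Big\| = o_P(1),
\end{equation*}
via a first-order Taylor expansion of $\skorb$ in $\ub$, the $\mathcal{J}$-type dominating function $h$ from assumption~\ref{assump:differentiability}, the $\sqrt{n}$-consistency of $\widehat{\thetab}_j$ from $\boldsymbol{(ms)}$ and the boundary control of $(\mathbf{F}_{j\eps})$.

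A Taylor expansion of~\eqref{eq: MPL based on residuals} and~\eqref{eq: MPL based on errors} around $\alfab$, together with consistency and assumption~\ref{assump:invertibility}, then yields
\begin{equation*}
\sqrt{n}\big(\widehat{\alfab}_n - \widetilde{\alfab}_n\big) = I(\alfab)^{-1}\,\tfrac{1}{\sqrt{n}}\suman \big[\skorb(\Ubhat_i;\alfab) - \skorb(\Ubtilde_i;\alfab)\big] + o_P(1),
\end{equation*}
so that the claim reduces to showing $R_n := n^{-1/2}\suman [\skorb(\Ubhat_i;\alfab) - \skorb(\Ubtilde_i;\alfab)] = o_P(1)$. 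A further Taylor expansion in $\ub$ writes $R_n = \sum_{j=1}^d n^{-1/2}\sum_{i=1}^n \skorb^{(j)}(\Ubtilde_i^{*};\alfab)(\Uhat_{ji} - \Utilde_{ji})$ for intermediate points $\Ubtilde_i^{*}$, after which I would decompose
\begin{equation*}
\Uhat_{ji} - \Utilde_{ji} \;=\; \tfrac{n}{n+1}\big\{[\hatF_{j\hateps}(\hateps_{ji}) - F_{j\eps}(\hateps_{ji})] - [\hatF_{j\eps}(\eps_{ji}) - F_{j\eps}(\eps_{ji})] + [F_{j\eps}(\hateps_{ji}) - F_{j\eps}(\eps_{ji})]\big\}.
\end{equation*}
The last bracket is linearised via the mean value theorem as approximately $f_{j\eps}(\eps_{ji})(\hateps_{ji} - \eps_{ji})$, which under $\boldsymbol{(ms)}$ becomes a well-controlled linear functional of $\sqrt{n}(\widehat{\thetab}_j - \thetab_j)$; the first two brackets combine into an empirical-process fluctuation indexed by residuals versus errors.

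The main obstacle is controlling this sum near the boundary of $(0,1)^d$, where $\skorb^{(j)}(\ub;\alfab)$ explodes like $[u_j(1-u_j)]^{-\beta_2}$ times a $\mathcal{J}$-function (assumption~\ref{assump:snice}) at exactly the points where $|\hateps_{ji} - \eps_{ji}|$ is hardest to bound and the residual-indexed empirical process is most irregular. The precise role of the balance $\beta > \max\{\beta_1 + 1/(r-1),\beta_2\}$ is to ensure that the weight~\eqref{eq: fjeps at quantile} in $(\mathbf{F}_{j\eps})$ damps the boundary singularities after the change of variables into the uniform scale and absorbs the $M_j$-moment cost coming from $\boldsymbol{(ms)}$. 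I would handle this by truncation: split each coordinate into an inner region $[c_n/n,\, 1-c_n/n]$ with a slowly growing $c_n$, treat the inner part by standard empirical-process weak convergence, and bound the boundary contribution by combining moment and maximal inequalities with the tail controls of $(\mathbf{F}_{j\eps})$ and the growth bounds from assumption~\ref{assump:snice}. This boundary analysis will be the bulk of the work and broadly follows the strategy of~\cite{chan2009statistical}, adapted to the wider class of marginal distributions (in particular those whose support is not all of $\mathbb{R}$) admitted here.
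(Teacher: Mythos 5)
Your overall architecture (existence and consistency via standard Z-estimator theory, reduction to $R_n=o_P(1)$, a mean-value expansion in $\ub$, and a boundary truncation) matches the paper's, which verifies the conditions of Theorem~A.10.2 of Bickel--Klaassen--Ritov--Wellner and delegates the analysis to Lemmas~\ref{lemma: convergence in probability of functions of ranks}--\ref{lemma linear bound for hatU}. However, there is a genuine gap at the heart of your plan: the interior contribution to $R_n$ is \emph{not} negligible term by term, and your roadmap never identifies the first-order cancellation that makes the theorem true. In your three-bracket decomposition, the first two brackets equal $[\hatF_{j\hateps}(\hateps_{ji})-\hatF_{j\eps}(\hateps_{ji})]$ plus an oscillation of the error empirical process; the oscillation is indeed negligible by equicontinuity, but the difference $\hatF_{j\hateps}-\hatF_{j\eps}$ is a \emph{drift}, not a fluctuation: by Lemma~\ref{lemma about hatFjhateps} it equals $f_{j\eps}(\cdot)\,\E_{\Xb}\big[\tfrac{m'_j}{s_j}+F_{j\eps}^{-1}(u)\tfrac{s'_j}{s_j}\big]^{\top}(\hatthetab_j-\thetab_j)$ up to a weighted $o_P(n^{-1/2})$ remainder. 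Summed against $n^{-1/2}\sum_i\psi_k^{(j)}(\Ub_i^{*})$ this produces an $O_P(1)$ contribution (the term $A_n$ in \eqref{eq: An asymptotically}). Your third bracket, linearised as $f_{j\eps}(\eps_{ji})(\hateps_{ji}-\eps_{ji})$, produces another $O_P(1)$ contribution proportional to $\sqrt{n}(\hatthetab_j-\thetab_j)$ (the term $B_n$, see \eqref{eq: Bn1}--\eqref{eq: Bn2}). The statement holds only because these two limits are exact negatives of each other, i.e.\ $B_n=-A_n+o_P(1)$; calling the first two brackets a ``fluctuation'' and the third ``well-controlled'' obscures this, and a reader bounding the pieces separately will find each converging to a nonzero limit and the proof will stall. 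Exhibiting this cancellation is the substantive content of the theorem, not the boundary control.

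A secondary, concrete problem is your truncation at $[c_n/n,\,1-c_n/n]$. The truncation level must satisfy two competing constraints: it must be small enough that the boundary sum of the possibly exploding score is $o_P(1)$, but large enough that the relative error $|\Uhat_{jk}-U_{jk}|/U_{jk}$ is uniformly $o_P(1)$ on the inner set, which is what licenses the Shorack-type ratio bounds and the weighted empirical-process tightness argument (Lemmas~\ref{lemma Ujhat minus Ujtilde} and~\ref{lemma linear bound for hatU}). The paper takes $\delta_n=n^{-1/\lambda}$ with $2(1-\beta+\tfrac{1}{r-1})<\lambda<2(1-\beta_1)$, a window that is nonempty precisely because of the condition $\beta>\beta_1+\tfrac{1}{r-1}$ in Assumption~\ref{assump:snice}. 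With $\delta_n\asymp c_n/n$ the required bound $\delta_n^{\beta-1}n^{-1/2}\to 0$ fails for every $\beta<\tfrac12$, so the linearisation of $\Uhat_{jk}-\Utilde_{jk}$ would not hold uniformly over your inner region. This is fixable by choosing the truncation rate correctly, but as stated the boundary strategy would not go through.
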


The next theorem say that if assumption \ref{assump:very nice} is satisfied then 
one can also include the case $\beta = 0$ in assumption $(\mathbf{F}_{j\eps})$.   
Thus for instance if one (rightly) assumes that $C$ is a Frank copula 
then the marginal distributions of the errors are allowed to be also uniform or 
exponential. 


\begin{theorem} \label{thm equiv of MPL estim for nice copulas}
 Suppose that assumptions  
$\boldsymbol{(ms)}$,   
\ref{assump:identifiability}, \ref{assump:smoothness rho}, 
\ref{assump:invertibility}, \ref{assump:very nice} and 
$(\mathbf{F}_{j\eps})$ are satisfied. 
Then the statement of Theorem~\ref{thm equiv of MPL estim} holds. 
\end{theorem}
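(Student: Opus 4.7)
The plan is to follow the same three-stage architecture as the proof of Theorem~\ref{thm equiv of MPL estim}, while exploiting the blanket boundedness granted by Assumption~\ref{assump:very nice} to dispense with the strict positivity condition $\beta>0$ in $(\mathbf{F}_{j\eps})$. The three stages are: (i) existence of consistent roots $\widehat{\alfab}_n$ and $\widetilde{\alfab}_n$; (ii) linearization of both estimating equations around $\alfab$; and (iii) the key estimate $n^{-1/2}\sum_{i=1}^n [\skorb(\Ubhat_i;\alfab) - \skorb(\Ubtilde_i;\alfab)] = o_P(1)$.

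Stages (i) and (ii) proceed along classical lines and are essentially verbatim from the proof of Theorem~\ref{thm equiv of MPL estim}. Boundedness of $\skorb$ together with Assumptions~\ref{assump:identifiability} and \ref{assump:invertibility} yields consistent roots via standard M-estimation arguments; the uniform bound on $\partial\skorb/\partial\ab\tr$ in a neighbourhood of $\alfab$ then justifies a mean-value expansion in $\ab$, leading to
\[
\sqrt{n}\bigl(\widehat{\alfab}_n - \widetilde{\alfab}_n\bigr) \;=\; I(\alfab)^{-1}\cdot\frac{1}{\sqrt{n}}\sum_{i=1}^n \bigl[\skorb(\Ubhat_i;\alfab) - \skorb(\Ubtilde_i;\alfab)\bigr] + o_P(1),
\]
so that the theorem reduces to showing that the displayed sum is $o_P(1)$.

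For stage (iii) I would Taylor-expand componentwise in $\ub$ along the segment joining $\Ubtilde_i$ to $\Ubhat_i$, writing
\[
\psi_k(\Ubhat_i;\alfab) - \psi_k(\Ubtilde_i;\alfab) = \sum_{j=1}^d \psi_k^{(j)}(\Ubtilde_i;\alfab)\bigl(\Uhat_{ji} - \Utilde_{ji}\bigr) + R_{ki},
\]
and then substitute $\Uhat_{ji} - \Utilde_{ji} = \tfrac{n}{n+1}\bigl[\widehat{F}_{j\hateps}(\hateps_{ji}) - \widehat{F}_{j\eps}(\eps_{ji})\bigr]$. The key ingredient is the Bahadur-type representation of this last difference established in the proof of Theorem~\ref{thm equiv of MPL estim} under $\boldsymbol{(ms)}$ and $(\mathbf{F}_{j\eps})$, which yields an i.i.d.\ linear approximation plus a uniform remainder of order $o_P(n^{-1/2})$. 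Because the weights $\psi_k^{(j)}(\Ubtilde_i;\alfab)$ are uniformly bounded by Assumption~\ref{assump:very nice}, the weighted average of the linear approximation is handled by Chebyshev's inequality, while the weighted remainder is absorbed into $o_P(1)$; the second-order term $R_{ki}$ is similarly dominated using boundedness of $\psi_k^{(j)}$ and the overall $O_P(n^{-1/2})$ rate of $\|\Ubhat_i - \Ubtilde_i\|$.

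The main obstacle, and the reason for a separate statement, is to verify that in this weighted sum none of the boundary pathologies that forced $\beta>0$ in Theorem~\ref{thm equiv of MPL estim} reappear when $(\mathbf{F}_{j\eps})$ allows $\beta=0$. There the corresponding bounds rely on integrals of the form $\int_0^1 |\psi_k^{(j)}(\cdot)|/[\min\{u_j,1-u_j\}]^{\beta_2}\,\mathrm{d}u_j$ and analogous quantities, which are treated via the $\widetilde{\mathcal{J}}^{\beta_1,\beta_2}$-structure and in turn force $\beta>0$ through $(\mathbf{F}_{j\eps})$. In the present setting these integrals collapse into bounded constants, so the chain of estimates from Theorem~\ref{thm equiv of MPL estim} closes even for bounded but discontinuous marginal densities such as the exponential and uniform, precisely the cases advertised in the discussion preceding the theorem.
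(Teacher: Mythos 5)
Your overall architecture is the right one and matches the paper's: the proof of Theorem~\ref{thm equiv of MPL estim} is re-run with the boundary-weighted machinery replaced by crude bounds, which is exactly how the paper proceeds (it substitutes Lemma~\ref{lemma: as normality of functions of ranks second} for Lemma~\ref{lemma: as normality of functions of ranks} and replaces the dominating function~$h$ by a constant). However, two steps in your stage (iii) do not close as written. First, the ``Bahadur-type representation established in the proof of Theorem~\ref{thm equiv of MPL estim}'' (Lemma~\ref{lemma Ujhat minus Ujtilde}) is proved only uniformly over the trimmed index set $\mathrm{J}_{jn}^{X}=\{i: U_{ji}\in[\delta_n,1-\delta_n],\,M_j(\Xb_i)\le a_n\}$; its proof works with $u\in[\delta_n/2,1-\delta_n/2]$ and its remainder degrades at the boundary. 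When $\beta=0$ you cannot invoke it for indices with $U_{ji}$ near $0$ or $1$, and this is precisely where the new difficulty sits: the paper needs a genuinely new global bound (Lemma~\ref{lemma about hatFjhateps second}, a Donsker-class argument over all of $(0,1)$, yielding Lemma~\ref{lemma Ujhat minus Ujtilde second}) to control the contribution of $i\in\mathrm{K}_{n}^{X}$, and even that bound is $O_P(n^{-1/2})$ only after dividing by $1+M_j(\Xb_i)$, which is unbounded. Your claimed ``overall $O_P(n^{-1/2})$ rate of $\|\Ubhat_i-\Ubtilde_i\|$'' is therefore not available; the boundary indices must be treated separately via $\tfrac{1}{n}\sum_{i\in\mathrm{K}_n^X}(1+M_j(\Xb_i))=o_P(1)$.

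Second, and more seriously, the leading linear term $n^{-1/2}\sum_i\psi_k^{(j)}(\Ub_i;\alfab)(\Uhat_{ji}-\Utilde_{ji})$ is \emph{not} $o_P(1)$ by Chebyshev's inequality. Each increment $\Uhat_{ji}-\Utilde_{ji}$ carries a drift of exact order $n^{-1/2}$, namely $f_{j\eps}(\eps_{ji})\E_{\Xb}[\cdot]\tr(\hatthetab_j-\thetab_j)+f_{j\eps}(\eps_{ji})(\hateps_{ji}-\eps_{ji})$, so the normalized weighted sum is generically $O_P(1)$; it vanishes only because the recentering of the residuals is exactly compensated by the induced shift of their marginal empirical distribution functions --- the cancellation $B_n=-A_n+o_P(1)$ in the proof of Lemma~\ref{lemma: as normality of functions of ranks}, which exploits the independence of $\epsb$ and $\Xb$ to factorize the limiting expectations. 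Without making this cancellation explicit the argument does not close. A smaller point: with only $\sup_{\ub}|\psi_k^{(j)}(\ub;\alfab)|<\infty$ from \ref{assump:very nice}, your second-order term $R_{ki}$ is bounded by $O(1)\,|\Uhat_{ji}-\Utilde_{ji}|$, whose normalized sum is again only $O_P(1)$; killing it requires the continuity of $\psi_k^{(j)}$ together with a localization to $\Ub_i\in[\delta,1-\delta]^d$, which is why the paper keeps the derivative at the mean-value point $\Ub_i^{*}$ and then passes to $\Ub_i$ by uniform continuity on compacta.
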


The above theorems imply that when fitting the copula~$C$ one 
can (under the stated assumptions) ignore the fact that he/she 
is working with estimated residuals ($\widehat{\eps}_{ij}$) instead of unobserved 
errors ($\eps_{ij}$). As it is known (and it also follows from the 
proof of Theorem~\ref{thm equiv of MPL estim}) the asymptotic 
distribution of $\widetilde{\alfab}_{n}$ is normal. Thus thanks 
to \eqref{eq: equiv of estim of parameters} one can conclude that 
also $\widehat{\alfab}_{n}$ is asymptotically normal.

\begin{corollary} \label{cor as normality of mpl}
 Suppose that the assumptions either of Theorem~\ref{thm equiv of MPL estim} 
or~\ref{thm equiv of MPL estim for nice copulas} hold. Then with 
probability going to one there exists a consistent 
root~$\widehat{\alfab}_{n}$ 
of~\eqref{eq: MPL based on residuals}. This root satisfies 
\[
 \sqrt{n} \big(\widehat{\alfab}_{n} - \alfab\big) 
 \inDist \mathsf{N}_{p}(\boldsymbol{0}, \Sigma),
 \quad \Sigma =  I^{-1}(\alfab)\,
  \var\big(\widetilde{\boldsymbol{\psi}}\big(\Ub) \big)
 \, I^{-1}(\alfab), 
\]
where $\widetilde{\boldsymbol{\psi}}\big(\ub) = \big(\widetilde{\psi}_{1}(\ub),\dotsc,\widetilde{\psi}_{p}(\ub)\big)\tr$ with 
\begin{equation} \label{eq: elements of psi}
 \widetilde{\psi}_{k}(\ub) =  \psi_{k}(\ub; \alfab) 
  + \sum_{j=1}^{d} \int_{[0,1]^{d}} \big[\ind\{u_{j} \leq v_j\}-v_{j}\big] \psi_{k}^{(j)}(\vb; \alfab)\,\mathrm{d} C(\vb), \quad k=1,\dotsc,p. 
\end{equation}

\end{corollary}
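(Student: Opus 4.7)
The plan is to reduce the problem to the classical asymptotics of the oracle estimator~$\widetilde{\alfab}_{n}$ and then to identify the influence function~$\widetilde{\boldsymbol{\psi}}$. By Theorem~\ref{thm equiv of MPL estim} (or Theorem~\ref{thm equiv of MPL estim for nice copulas}), with probability tending to one there exist consistent roots $\widehat{\alfab}_{n}$ and $\widetilde{\alfab}_{n}$ with $\sqrt{n}(\widehat{\alfab}_{n}-\widetilde{\alfab}_{n}) = o_{P}(1)$. Thus it is enough to prove $\sqrt{n}(\widetilde{\alfab}_{n}-\alfab) \inDist \mathsf{N}_{p}(\mathbf{0},\Sigma)$ and invoke Slutsky's lemma.

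First I would carry out the standard M-estimator Taylor expansion. Applying assumption~\ref{assump:differentiability} (or~\ref{assump:very nice}) to $\sum_{i=1}^{n}\skorb(\Ubtilde_{i};\widetilde{\alfab}_{n})=\mathbf{0}$ and expanding around $\alfab$ yields
\[
 \sqrt{n}\,\big(\widetilde{\alfab}_{n}-\alfab\big)
 = -\Big[\tfrac{1}{n}\suman \tfrac{\partial \skorb(\Ubtilde_{i};\ab)}{\partial \ab\tr}\Big|_{\ab=\ab^{*}_{n}}\Big]^{-1}\, \tfrac{1}{\sqrt{n}}\suman \skorb(\Ubtilde_{i};\alfab),
\]
where $\ab^{*}_{n}$ lies between $\widetilde{\alfab}_{n}$ and $\alfab$. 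Combining the consistency of $\widetilde{\alfab}_{n}$ with the dominating function $h\in\mathcal{J}$ from assumption~\ref{assump:differentiability} (which is $C$-integrable because $\beta_{1},\eta<1/2$) and continuity in $\ab$, a uniform law of large numbers delivers the convergence of the bracketed matrix to $I(\alfab)$, which is invertible by assumption~\ref{assump:invertibility}.

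The crux is then the asymptotic linearization of $\frac{1}{\sqrt{n}}\suman\skorb(\Ubtilde_{i};\alfab)$ into an i.i.d.\ sum with summand $\widetilde{\boldsymbol{\psi}}(\Ub_{i})$. For each coordinate $k$, I would Taylor-expand $\psi_{k}(\Ubtilde_{i};\alfab)$ around $\Ub_{i}$ in the $u$-arguments, use the fact that $\widetilde{U}_{ji} - U_{ji} = \widehat{F}_{j\eps}(\eps_{ji}) - F_{j\eps}(\eps_{ji}) + O(n^{-1})$, and obtain
\[
 \tfrac{1}{\sqrt{n}}\suman \psi_{k}(\Ubtilde_{i};\alfab) = \tfrac{1}{\sqrt{n}}\suman \psi_{k}(\Ub_{i};\alfab)
 + \sum_{j=1}^{d}\tfrac{1}{\sqrt{n}}\suman \psi_{k}^{(j)}(\Ub_{i};\alfab)\big[\widehat{F}_{j\eps}(\eps_{ji})-F_{j\eps}(\eps_{ji})\big] + o_{P}(1).
\]
Swapping the order of summation in the correction term, writing $\widehat{F}_{j\eps}(\eps_{ji})-F_{j\eps}(\eps_{ji}) = \frac{1}{n}\sum_{\ell}\{\ind[U_{j\ell}\le U_{ji}]-U_{ji}\}$, and applying a conditional law of large numbers replaces the inner average over $i$ by the integral $\int [\ind\{u_{j\ell}\le v_{j}\}-v_{j}]\psi_{k}^{(j)}(\vb;\alfab)\,\ud C(\vb)$. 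Rearranging one obtains exactly $\frac{1}{\sqrt{n}}\sum_{\ell}\widetilde{\psi}_{k}(\Ub_{\ell}) + o_{P}(1)$ with $\widetilde{\psi}_{k}$ as in~\eqref{eq: elements of psi}. A multivariate CLT then yields
\[
 \tfrac{1}{\sqrt{n}}\suman \skorb(\Ubtilde_{i};\alfab) \inDist \mathsf{N}_{p}\big(\mathbf{0},\var(\widetilde{\boldsymbol{\psi}}(\Ub))\big),
\]
and combining with the matrix limit gives the stated $\Sigma = I^{-1}(\alfab)\var(\widetilde{\boldsymbol{\psi}}(\Ub))\,I^{-1}(\alfab)$.

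The main obstacle is the linearization step: controlling the Taylor remainder and justifying the swap of summation uniformly when $\skorb$ and its derivatives are allowed to blow up at the boundary of $(0,1)^{d}$. This is where assumption~\ref{assump:snice} (via the $\widetilde{\mathcal{J}}^{\beta_{1},\beta_{2}}$-class with $\beta>\beta_{2}$) is critical, since the bounds on $\psi_{k}^{(j)}$ are precisely tuned to absorb the fluctuations $\widehat{F}_{j\eps}(\eps_{ji})-F_{j\eps}(\eps_{ji})$ near $0$ and $1$, whereas under~\ref{assump:very nice} the same step is immediate by boundedness. The remaining ingredients (uniform law of large numbers for the Jacobian and integrability of $\widetilde{\boldsymbol{\psi}}$) are straightforward consequences of the $\mathcal{J}$-function bounds already used in the proof of Theorem~\ref{thm equiv of MPL estim}.
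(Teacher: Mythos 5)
Your proposal is correct and follows essentially the same route as the paper: the paper gives no separate proof of this corollary because the asymptotic representation $\sqrt{n}(\widehat{\alfab}_{n}-\alfab)=I^{-1}(\alfab)\,n^{-1/2}\sum_{i}\widetilde{\boldsymbol{\psi}}(\Ub_{i})+o_{P}(1)$ is already established at the end of the proof of Theorem~\ref{thm equiv of MPL estim} (via Theorem~A.10.2 of BKRW), after which only the CLT for the i.i.d.\ sum is needed. Your explicit Taylor expansion of the estimating equation and the linearization of $n^{-1/2}\sum_{i}\skorb(\Ubtilde_{i};\alfab)$ into $n^{-1/2}\sum_{i}\widetilde{\boldsymbol{\psi}}(\Ub_{i})$ are exactly the steps the paper packages into that general M-estimation theorem and into Corollary~\ref{corollary: as normality of functions of ranks} (which itself defers the boundary control to Lemma~2 of the cited reference), so the two arguments coincide in substance.
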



\section{Simulation study} \label{sec: sim study} 
A Monte Carlo study was conducted in order to illustrate  the theoretical conclusions and to show 
how the finite sample  
performance of the maximum pseudo-likelihood estimator depends on the 
level of violation of the regularity assumptions. 

\subsection{Settings}
To keep the presentation as clear as possible we concentrate on a bivariate 
response variable  (some results for a three-dimensional case can be found in the Supplementary material) 
following the model
\begin{equation}
\label{mod1}
Y_{1i}=\theta_{10} + \theta_{11} X_i +\eps_{1i}, \qquad 
Y_{2i}=\theta_{20}+\theta_{21} X_i+\eps_{2i}, \qquad i=1,\dotsc,n.
\end{equation}
%
The joint cumulative distribution function $H(y_1,y_2)$ of the random vector $(\eps_{1i},\eps_{2i})^\top$ is $C\big(F_{1\eps}(y_1),F_{2\eps}(y_2)\big)$, where $C$ is a copula
and $F_{1\eps}$, $F_{2\eps}$ are marginal distribution functions. The following five copula families were considered for $C$: Clayton, Frank, Gumbel, Gaussian, and Student with 5 degrees of freedom. The  copula parameter $\alpha$ is chosen such that the corresponding Kendall's tau is $\tau=0.5$ or $\tau=0.75$. The marginal distributions were chosen one of the following: 
\begin{itemize}
\item[$-$] $F_{1\eps}$ is standard normal and $F_{2\eps}$ exponential with mean 1 (denoted as N+E), 
\item[$-$]  $F_{1\eps}$ is standard normal and $F_{2\eps}$ uniform on $[-1,1]$ (denoted as N+U), 
\item[$-$]  $F_{1\eps}$ and  $F_{2\eps}$ are both Student $t$ with 5 degrees of freedeom (denoted as t).
\end{itemize} 
The first two situations satisfy the  assumption $(\mathbf{F}_{j\eps})$ only with $\beta=0$. Hence, the result of Theorem~\ref{thm equiv of MPL estim for nice copulas}  applies only if 
\eqref{assump:very nice} holds. 
From the five considered copula families, this is the case only for the Frank copula. On the other hand, the $t$ marginals satisfy $(\mathbf{F}_{j\eps})$ with $\beta>0$ and the assumptions of Theorem~1 hold. Hence, these marginals provide a useful regular benchmark  for a comparison with the first two situations. 

The covariate $X_i$ is generated  from the standard normal distribution 
(Poisson distribution with mean 5 was considered as well, but the results are 
almost identical and are not reported).  
The presented results correspond to the particular choice $\theta_{10}=1$, $\theta_{20}=-1$, $\theta_{11}=1$, and $\theta_{21}=2$. 
The unobserved errors $\eps_{ji}$ are estimated as the residuals after fitting the regression lines (marginally) where the parameters are estimated with the help of the least squares method assuming $s_j\equiv 1$, $j=1,2$, cf. Remark~\ref{remark1}. 

%
%
%
%

The following estimators of the parameter $\alpha$ are compared:
\begin{enumerate}
\renewcommand{\labelenumi}{(\roman{enumi})}
\item (oracle) inversion of Kendall's tau based on the unobserved errors $\widetilde{\alpha}^{(ik)}$;
\item inversion of Kendall's tau based on the residuals $\widehat{\alpha}^{(ik)}$;
\item (oracle) maximum pseudo-likelihood estimator  based on the unobserved errors   $\widetilde{\alpha}^{(pl)}$;
\item maximum pseudo-likelihood method estimator on the residuals  $\widehat{\alpha}^{(pl)}$;
\item modified maximum pseudo-likelihood estimator based on the residuals  $\widehat{\alpha}^{(pl*)}$. 
\end{enumerate} 
%
The latter estimator $\widehat{\alpha}^{(pl*)}$ is inspired by the estimator introduced in the context of single index conditional copulas by \cite{fermanian2018single}. In our situation 
this estimator coincides with the maximum pseudo-likelihood estimator  computed only from $\widehat{\mathbf{U}}_i$ which lie in $[\delta_n,1-\delta_n]^2$, where $\delta_n=D n^{-1/\lambda}$. 
Note that this choice corresponds to the choice $\delta_{n}$ in the proof 
of Theorem~\ref{thm equiv of MPL estim}. 
In the presented simulations we choose $D=1/4$ and $\lambda=1.9$, thus 
in view of Remark~\ref{remark about beta1 and beta2} the statement of 
Theorem~\ref{thm equiv of MPL estim} (or~\ref{thm equiv of MPL estim for nice copulas})  
holds also for $\widehat{\alpha}^{(pl*)}$ provided that the corresponding regularity 
assumptions hold.


In order to have more comparable results for the various copula families, the estimates of the parameters are presented on the Kendall's tau scale. The performance of the estimators is measured by the bias, the standard error (SD), and the root mean square error (RMSE), which are estimated from $1\,000$ random samples of sample sizes   $n=100,\ 1\,000,\ 10\,000$ and whose  100 multiplies are reported,  because 
the obtained quantities are typically of order $10^{-2}$. 
The obtained results for Clayton, Frank and Gaussian copulas are listed in Tables \ref{tab.2dim.clayton}, \ref{tab.2dim.frank}, and \ref{tab.2dim.normal}, while tables for Gumbel and Student copula can be found in the Supplementary material. 
The Monte Carlo simulations were run in R statistical computing environment
\citep{R_2018}. The same starting seed was always used so that the estimates based on the true 
(but unobserved) errors~$\eps_{ij}$ are the same regardless the choice of the marginals $F_{1\eps}$ and $F_{2\eps}$.  These `oracle' estimates are denoted as ``inov'' in the tables and provide 
benchmarks for the estimators calculated from the estimated residuals.

\begin{table}[ht]
\centering
\begin{footnotesize}
\begin{tabular}{ccc|rrr|rrr|rrr}
  \hline
\hline
   $\tau$& margins & estim& \multicolumn{3}{c|}{$n=100$}&\multicolumn{3}{c|}{$n=1\,000$}& \multicolumn{3}{c}{$n=10\,000$}\\
   && & bias & SD &RMSE& bias & SD &RMSE& bias & SD &RMSE \\
 \hline
0.50 & inov & $\widetilde{\alpha}^{(ik)}$ & $-$0.03 & 5.54 & 5.54 & 0.00 & 1.64 & 1.64 & $-$0.01 & 0.53 & 0.53 \\ 
   &  & $\widetilde{\alpha}^{(pl)}$ & 0.33 & 4.90 & 4.91 & 0.01 & 1.49 & 1.49 & 0.00 & 0.48 & 0.48 \\ 
   \cline{2-12}
 & N+E & $\widehat{\alpha}^{(ik)}$ & $-$1.25 & 5.62 & 5.76 & $-$0.27 & 1.64 & 1.67 & $-$0.05 & 0.53 & 0.53 \\ 
   &  & $\widehat{\alpha}^{(pl)}$ & $-$3.91 & 5.54 & 6.78 & $-$2.26 & 2.08 & 3.08 & $-$0.80 & 0.75 & 1.10 \\ 
   &  & $\widehat{\alpha}^{(pl*)}$ & $-$1.94 & 5.30 & 5.65 & $-$1.23 & 1.81 & 2.19 & $-$0.44 & 0.63 & 0.77 \\ 
\cline{2-12}
 & N+U & $\widehat{\alpha}^{(ik)}$ & $-$0.21 & 5.55 & 5.55 & $-$0.03 & 1.63 & 1.63 & $-$0.02 & 0.53 & 0.53 \\ 
   &  & $\widehat{\alpha}^{(pl)}$ & $-$0.84 & 4.86 & 4.93 & $-$0.61 & 1.53 & 1.65 & $-$0.22 & 0.51 & 0.55 \\ 
   &  & $\widehat{\alpha}^{(pl*)}$ & 0.02 & 5.00 & 5.00 & $-$0.13 & 1.50 & 1.51 & $-$0.05 & 0.49 & 0.49 \\ 
\cline{2-12}
 & t & $\widehat{\alpha}^{(ik)}$ & $-$0.15 & 5.58 & 5.58 & $-$0.01 & 1.64 & 1.64 & $-$0.02 & 0.53 & 0.53 \\ 
   &  & $\widehat{\alpha}^{(pl)}$ & 0.10 & 4.96 & 4.96 & $-$0.02 & 1.50 & 1.50 & $-$0.01 & 0.48 & 0.48 \\ 
   &  & $\widehat{\alpha}^{(pl*)}$ & 0.38 & 5.05 & 5.06 & 0.06 & 1.51 & 1.51 & 0.02 & 0.48 & 0.48 \\ 
   \hline
0.75 & inov & $\widetilde{\alpha}^{(ik)}$ & 0.02 & 3.40 & 3.40 & $-$0.01 & 1.01 & 1.01 & 0.01 & 0.31 & 0.31 \\ 
   &  & $\widetilde{\alpha}^{(pl)}$ & $-$0.77 & 3.12 & 3.21 & $-$0.16 & 0.93 & 0.94 & $-$0.01 & 0.28 & 0.28 \\ 
\cline{2-12}
 & N+E & $\widehat{\alpha}^{(ik)}$ & $-$2.14 & 3.70 & 4.27 & $-$0.48 & 1.08 & 1.18 & $-$0.07 & 0.32 & 0.33 \\ 
   &  & $\widehat{\alpha}^{(pl)}$ & $-$9.19 & 5.85 & 10.89 & $-$4.19 & 2.88 & 5.09 & $-$1.57 & 1.14 & 1.94 \\ 
   &  & $\widehat{\alpha}^{(pl*)}$ & $-$6.26 & 4.95 & 7.98 & $-$2.86 & 2.36 & 3.71 & $-$1.07 & 0.94 & 1.43 \\ 
\cline{2-12}
 & N+U & $\widehat{\alpha}^{(ik)}$ & $-$0.24 & 3.39 & 3.40 & $-$0.06 & 1.01 & 1.01 & 0.00 & 0.31 & 0.31 \\ 
   &  & $\widehat{\alpha}^{(pl)}$ & $-$2.99 & 3.27 & 4.43 & $-$1.22 & 1.18 & 1.70 & $-$0.44 & 0.41 & 0.60 \\ 
   &  & $\widehat{\alpha}^{(pl*)}$ & $-$1.63 & 3.15 & 3.55 & $-$0.60 & 1.01 & 1.17 & $-$0.20 & 0.33 & 0.39 \\ 
\cline{2-12}
 & t & $\widehat{\alpha}^{(ik)}$ & $-$0.22 & 3.45 & 3.45 & $-$0.05 & 1.01 & 1.01 & 0.01 & 0.31 & 0.31 \\ 
   &  & $\widehat{\alpha}^{(pl)}$ & $-$1.21 & 3.21 & 3.43 & $-$0.22 & 0.93 & 0.95 & $-$0.02 & 0.28 & 0.28 \\ 
   &  & $\widehat{\alpha}^{(pl*)}$ & $-$1.04 & 3.24 & 3.40 & $-$0.17 & 0.93 & 0.95 & $-$0.01 & 0.28 & 0.28 \\ 
   \hline
\hline
\end{tabular}
\caption{Model \eqref{mod1} with  Clayton copula, quantities multiplied by 100.} 
\label{tab.2dim.clayton}
\end{footnotesize}
\end{table}
 
 \begin{table}[ht]
\centering
\begin{footnotesize}
\begin{tabular}{ccc|rrr|rrr|rrr}
  \hline
\hline
   $\tau$& margins & estim& \multicolumn{3}{c|}{$n=100$}&\multicolumn{3}{c|}{$n=1\,000$}& \multicolumn{3}{c}{$n=10\,000$}\\
   && & bias & SD &RMSE& bias & SD &RMSE& bias & SD &RMSE \\
\cline{2-12}
0.50 & inov & $\widetilde{\alpha}^{(ik)}$ & $-$0.03 & 4.62 & 4.62 & 0.01 & 1.44 & 1.43 & 0.01 & 0.45 & 0.45 \\ 
   &  & $\widetilde{\alpha}^{(pl)}$ & $-$0.03 & 4.51 & 4.50 & 0.01 & 1.42 & 1.42 & 0.01 & 0.45 & 0.45 \\ 
\cline{2-12}
 & N+E & $\widehat{\alpha}^{(ik)}$ & $-$0.45 & 4.68 & 4.70 & $-$0.05 & 1.44 & 1.44 & 0.00 & 0.45 & 0.45 \\ 
   &  & $\widehat{\alpha}^{(pl)}$ & $-$0.45 & 4.55 & 4.57 & $-$0.05 & 1.43 & 1.43 & 0.00 & 0.45 & 0.45 \\ 
   &  & $\widehat{\alpha}^{(pl*)}$ & $-$0.21 & 4.84 & 4.84 & $-$0.04 & 1.46 & 1.46 & 0.00 & 0.45 & 0.45 \\ 
\cline{2-12}
 & N+U & $\widehat{\alpha}^{(ik)}$ & $-$0.08 & 4.65 & 4.65 & 0.00 & 1.44 & 1.43 & 0.00 & 0.45 & 0.45 \\ 
   &  & $\widehat{\alpha}^{(pl)}$ & $-$0.08 & 4.53 & 4.53 & 0.00 & 1.42 & 1.42 & 0.01 & 0.45 & 0.45 \\ 
   &  & $\widehat{\alpha}^{(pl*)}$ & 0.09 & 4.85 & 4.85 & 0.01 & 1.45 & 1.45 & 0.01 & 0.45 & 0.45 \\ 
   \hline
0.75 & inov & $\widetilde{\alpha}^{(ik)}$ & $-$0.11 & 2.50 & 2.50 & 0.00 & 0.74 & 0.74 & 0.00 & 0.23 & 0.23 \\ 
   &  & $\widetilde{\alpha}^{(pl)}$ & $-$0.53 & 2.45 & 2.50 & $-$0.06 & 0.74 & 0.74 & 0.00 & 0.23 & 0.22 \\ 
\cline{2-12}
 & N+E & $\widehat{\alpha}^{(ik)}$ & $-$1.17 & 2.79 & 3.02 & $-$0.14 & 0.76 & 0.77 & $-$0.01 & 0.23 & 0.23 \\ 
   &  & $\widehat{\alpha}^{(pl)}$ & $-$1.59 & 2.77 & 3.19 & $-$0.19 & 0.76 & 0.78 & $-$0.02 & 0.23 & 0.23 \\ 
   &  & $\widehat{\alpha}^{(pl*)}$ & $-$1.42 & 2.90 & 3.23 & $-$0.17 & 0.77 & 0.79 & $-$0.01 & 0.23 & 0.23 \\ 
\cline{2-12}
 & N+U & $\widehat{\alpha}^{(ik)}$ & $-$0.25 & 2.53 & 2.54 & $-$0.01 & 0.74 & 0.74 & 0.00 & 0.23 & 0.23 \\ 
   &  & $\widehat{\alpha}^{(pl)}$ & $-$0.69 & 2.50 & 2.59 & $-$0.07 & 0.74 & 0.74 & 0.00 & 0.23 & 0.23 \\ 
   &  & $\widehat{\alpha}^{(pl*)}$ & $-$0.57 & 2.62 & 2.68 & $-$0.05 & 0.76 & 0.76 & 0.00 & 0.23 & 0.23 \\ 
   \hline
\hline 
\end{tabular}
\caption{Model \eqref{mod1} with Frank copula, quantities multiplied by 100.} 
\label{tab.2dim.frank}
\end{footnotesize}
\end{table}

\begin{table}[ht]
\centering
\begin{footnotesize}
\begin{tabular}{ccc|rrr|rrr|rrr}
  \hline
\hline
   $\tau$& margins & estim& \multicolumn{3}{c|}{$n=100$}&\multicolumn{3}{c|}{$n=1\,000$}& \multicolumn{3}{c}{$n=10\,000$}\\
   && & bias & SD &RMSE& bias & SD &RMSE& bias & SD &RMSE \\
 \hline
0.50 & inov & $\widetilde{\alpha}^{(ik)}$ & 0.03 & 4.94 & 4.94 & $-$0.07 & 1.53 & 1.53 & 0.00 & 0.48 & 0.48 \\ 
   &  & $\widetilde{\alpha}^{(pl)}$ & 1.07 & 4.51 & 4.63 & 0.10 & 1.39 & 1.40 & 0.03 & 0.44 & 0.44 \\ 
\cline{2-12}
 & N+E & $\widehat{\alpha}^{(ik)}$ & $-$0.43 & 4.97 & 4.99 & $-$0.17 & 1.53 & 1.54 & $-$0.02 & 0.48 & 0.48 \\ 
   &  & $\widehat{\alpha}^{(pl)}$ & 0.32 & 4.54 & 4.55 & $-$0.21 & 1.41 & 1.43 & $-$0.06 & 0.45 & 0.45 \\ 
   &  & $\widehat{\alpha}^{(pl*)}$ & 0.99 & 4.90 & 5.00 & 0.08 & 1.46 & 1.46 & 0.03 & 0.45 & 0.45 \\ 
\cline{2-12}
 & N+U & $\widehat{\alpha}^{(ik)}$ & $-$0.06 & 4.97 & 4.97 & $-$0.08 & 1.53 & 1.53 & 0.00 & 0.48 & 0.48 \\ 
   &  & $\widehat{\alpha}^{(pl)}$ & 0.87 & 4.53 & 4.62 & $-$0.01 & 1.40 & 1.39 & $-$0.01 & 0.44 & 0.44 \\ 
   &  & $\widehat{\alpha}^{(pl*)}$ & 1.36 & 4.86 & 5.05 & 0.22 & 1.46 & 1.47 & 0.07 & 0.45 & 0.45 \\ 
\cline{2-12}
 & t & $\widehat{\alpha}^{(ik)}$ & 0.04 & 4.99 & 4.98 & $-$0.07 & 1.53 & 1.53 & 0.00 & 0.48 & 0.48 \\ 
   &  & $\widehat{\alpha}^{(pl)}$ & 1.08 & 4.55 & 4.67 & 0.09 & 1.40 & 1.40 & 0.02 & 0.44 & 0.44 \\ 
   &  & $\widehat{\alpha}^{(pl*)}$ & 1.42 & 4.90 & 5.10 & 0.21 & 1.46 & 1.48 & 0.06 & 0.45 & 0.45 \\ 
   \hline
0.75 & inov & $\widetilde{\alpha}^{(ik)}$ & 0.16 & 2.79 & 2.80 & $-$0.02 & 0.89 & 0.89 & 0.00 & 0.27 & 0.27 \\ 
   &  & $\widetilde{\alpha}^{(pl)}$ & 0.06 & 2.53 & 2.53 & $-$0.02 & 0.80 & 0.80 & 0.00 & 0.25 & 0.25 \\ 
\cline{2-12}
 & N+E & $\widehat{\alpha}^{(ik)}$ & $-$1.02 & 2.93 & 3.10 & $-$0.24 & 0.90 & 0.93 & $-$0.04 & 0.27 & 0.27 \\ 
   &  & $\widehat{\alpha}^{(pl)}$ & $-$1.81 & 2.81 & 3.34 & $-$0.73 & 0.95 & 1.20 & $-$0.21 & 0.30 & 0.37 \\ 
   &  & $\widehat{\alpha}^{(pl*)}$ & $-$1.01 & 2.77 & 2.95 & $-$0.40 & 0.88 & 0.97 & $-$0.10 & 0.27 & 0.29 \\ 
\cline{2-12}
 & N+U & $\widehat{\alpha}^{(ik)}$ & $-$0.08 & 2.80 & 2.80 & $-$0.05 & 0.89 & 0.89 & $-$0.01 & 0.27 & 0.27 \\ 
   &  & $\widehat{\alpha}^{(pl)}$ & $-$0.48 & 2.52 & 2.56 & $-$0.27 & 0.82 & 0.86 & $-$0.09 & 0.25 & 0.27 \\ 
   &  & $\widehat{\alpha}^{(pl*)}$ & 0.00 & 2.61 & 2.60 & $-$0.05 & 0.81 & 0.81 & $-$0.01 & 0.25 & 0.25 \\ 
\cline{2-12}
 & t & $\widehat{\alpha}^{(ik)}$ & 0.14 & 2.82 & 2.82 & $-$0.02 & 0.89 & 0.89 & $-$0.01 & 0.27 & 0.27 \\ 
   &  & $\widehat{\alpha}^{(pl)}$ & 0.03 & 2.56 & 2.56 & $-$0.02 & 0.79 & 0.79 & 0.00 & 0.25 & 0.25 \\ 
   &  & $\widehat{\alpha}^{(pl*)}$ & 0.20 & 2.62 & 2.62 & 0.02 & 0.80 & 0.80 & 0.02 & 0.25 & 0.25 \\ 
   \hline
\hline
\end{tabular}
\caption{Model \eqref{mod1} with  Gaussian copula, quantities multiplied by 100.} 
\label{tab.2dim.normal}
\end{footnotesize}
\end{table}

\subsection{Findings}

As it is well known \citep{genest_et_al_1995,tsukahara_2005} in case of no covariates 
the maximum pseudo-likelihood is usually more efficient than the moment like estimators. 
This is illustrated by the performance of the estimators $\widetilde{\alpha}^{(ik)}$ and 
$\widetilde{\alpha}^{(pl)}$ that are calculated from the errors~$\eps_{ij}$. The question 
of interest is if this property continues to hold also for estimators that are calculated 
from the residuals (i.e., in the presence of covariates). 

Generally speaking one can conclude that in agreement with our theoretical results  
the maximum pseudo-likelihood estimator $\widehat{\alpha}^{(pl)}$ outperforms 
$\widehat{\alpha}^{(ik)}$ in situations for which our regularity assumptions 
are satisfied (see Table~\ref{tab.2dim.frank} and the rows corresponding to $t$-marginals 
in Tables~\ref{tab.2dim.clayton} and~\ref{tab.2dim.normal}). For these situations the 
modified maximum pseudo-likelihood estimator $\widehat{\alpha}^{(pl*)}$ is of no interest. 

On the other hand the performance of 
$\widehat{\alpha}^{(pl)}$ may deteriorate significantly 
if the regularity assumptions are not met. The problems are generally worse for larger 
values of Kendall's tau (a stronger dependence). It is also interesting that 
exponential margins (rows denoted as N+E) are much more problematic than uniform margins 
(rows denoted as N+U).  

As illustrated in Table~\ref{tab.2dim.clayton} one should be in particular 
careful when fitting the Clayton copula (and also the Gumbel copula as 
illustrated in the Supplementary material). 
Then $\widehat{\alpha}^{(pl)}$ performs significantly worse than $\widehat{\alpha}^{(ik)}$ 
in cases of non-regular margins combined with a strong dependence ($\tau=0.75$). The problems 
can be to some extent prevented by considering the modified estimator 
$\widehat{\alpha}^{(pl*)}$ in particular in case of uniform margins (N+U). Thus 
while for Frank copula  the modified estimator  $\widehat{\alpha}^{(pl*)}$ is of no 
interest, for the Clayton (and the Gumbel) copula it presents an interesting alternative 
to the `standard' pseudo maximum-likelihood estimator.

The results for the Gaussian copula (see Table~\ref{tab.2dim.normal}) are of independence 
interest. Note that although the density of the copula function is unbounded,  
the estimator $\widehat{\alpha}^{(pl)}$  
performs better than $\widehat{\alpha}^{(ik)}$ for $\tau = 0.5$ even in case 
of exponential margins (N+E). And this holds true for uniform margins (N+U) even for $\tau=0.75$.  
This raises a question whether a milder assumptions than $(\mathbf{F}_{j\eps})$ would 
be sufficient for the Gaussian copula. 


\medskip 
 
 An analogous simulation study was conducted also for a system of three linear regressions, where the vector of innovations was sampled from  $C\big(F_{1\eps}(y_1),F_{2\eps}(y_2),F_{3\eps}(y_3)\big)$ with
 the marginals $F_{1\eps}$ and $F_{2\eps}$ being standard normal and  $F_{3\eps}$ either exponential (with mean~1)  or uniform on $[-1,1]$. As the obtained results are very similar to the results for model~\eqref{mod1}, they are not presented here, but can be found in the Supplementary material. The common important finding 
is that the pseudo-likelihood estimator   $\widehat{\alpha}^{(pl)}$ may 
perform poorly 
(and noticeably worse compared to  $\widehat{\alpha}^{(ik)}$) for copula families with unbounded densities 
even in cases when only one of the marginals does not satisfy the regularity assumption  while the remaining ones are regular. 

%

\section{Conclusions and further discussions} 
\label{sec: discussion}

As illustrated in the previous section one should be careful when a copula with an 
unbounded density is fitted with the help of the maximum pseudo-likelihood method. 
Although the assumptions of Theorem~\ref{thm equiv of MPL estim} are not strict one 
should keep in mind that they are not satisfied for distributions with a non-continuous 
error density function~$f_{j\eps}$ (e.g., uniform distribution, exponential distribution, \ldots). 
Although such situations are probably rare in practice, there are applications 
in which for instance uniform errors can naturally appear  \citep[see e.g.,][]{schechtman1986estimating}. 


One of the possible next steps would be to generalize the results into the time-series context and to find 
the assumptions so that the results claimed in \cite{chen2006estimation} hold. Based 
on our results for i.i.d. setting and our simulation study 
we conjecture that the method 
of the pseudo-likelihood estimation can be problematic when the marginal models have exponential innovations (or more generally 
positive or bounded innovations with discontinuous density) 
\citep[see e.g.][]{lawrance1985modelling, davis1989estimation, andel1989non, andel1992nonnegative, nielsen2003likelihood}
and one uses $\sqrt{n}$-consistent estimators of the model parameters.

Note that in models where (based on our findings) the use 
of maximum pseudo-likelihood estimation is questionable, one 
can consider 
the method of moments \citep[see e.g., Section~5.5.1 of][]{mcneil2005quantitative,brahimi2012semiparametric}.  
As proved in \cite{cote_genest_omelka_2019} 
many moment estimators based on residuals 
satisfy~\eqref{eq: equiv of estim of parameters} under less restrictive assumptions 
on the marginal error density~$f_{j\eps}$. In particular 
for standard two-dimensional copulas the method of the inversion 
of Kendall's tau can present a `robust' alternative. 
It is usually only slightly less efficient if no covariates are 
present, but in the presence of covariates it can perform significantly better than 
the maximum pseudo-likelihood estimator.  

For the sake of brevity 
we concentrated only on 
estimation of the copula parameter. We conjecture that also 
other procedures (e.g., procedures for goodness-of-fit testing) 
that make use of the maximum pseudo-likelihood 
estimator $\widehat{\alfab}_{n}$ calculated from the residuals 
will be valid provided that next to our assumptions also 
some standard regularity assumptions for these 
procedures are satisfied. 

\section*{Acknowledgments}
 M.\ Omelka gratefully acknowledges support from the grant GACR 19-00015S. The research of \v{S}.\ Hudecov\'{a} was supported by the grant GACR 18-01781Y. 
 N.\ Neumeyer gratefully acknowledges support from the DFG (Research Unit FOR 1735 Structural Inference in Statistics: Adaptation and Efficiency).
 
\appendix

\renewcommand{\theequation}{A\arabic{equation}}
\setcounter{equation}{0}

\section{Proofs of the main results}

Note that the estimated pseudoobservations $\Ubhat_{i}$ given by \eqref{eq: estimated pseudoobservations} can be viewed as estimates of 
`unobserved' pseudoobservations $\Ubtilde_{i}$ (given in \eqref{eq: true pseudoobservations}) 
 which can be further viewed as estimates of $\Ub_{i}$, given by 
 \begin{equation*} 
 \Ub_{i} = \big(U_{1i},\dotsc,U_{di}\big) \tr 
 = \big(F_{1\eps}(\eps_{1i}),\dotsc,F_{d\eps}(\eps_{di})) \tr.    
 \end{equation*}
To prove 
Theorem~\ref{thm equiv of MPL estim} we need some technical 
results about the `closeness' of $\Uhat_{ji}$ (the $j$-th element of $\Ubhat_{i}$) to 
$\Utilde_{ji}$ and $U_{ji}$. 

As we will show later one does not need to handle $\Uhat_{ji}$ if 
either $U_{ji}$ is close to zero or one or if $M_{j}(\Xb_i)$ is too large. 
This is formalised as follows. Introduce the set of indices 
\begin{equation} \label{eq: Jjdeltan}
 \mathrm{J}_{jn}^{X} = \big\{i \in \{1,\dotsc,n\}: U_{ji} \in [\delta_n, 1-\delta_{n}], 
  M_{j}(\Xb_i) \leq a_{n} 
 \big\},   
\end{equation}
where 
%
%
\begin{equation} \label{eq: deltan and an}
 \delta_{n} = \frac{1}{n^{1/\lambda}},  
\quad \text{and} \quad 
 a_{n} = n^{1/(\lambda_{x} r)}, 
  \quad \text{for some }  \lambda \geq 1 
 \text{ and } 0 < \lambda_{x} \leq \lambda.    
\end{equation}
%
 

%
%

The following lemma gives an upper bound on the number of indices~$i$ for which it holds that 
$U_{ji} \not \in [\delta_n, 1-\delta_n]$ or $M_{j}(\Xb_i) > a_{n}$. 

\begin{lemma} \label{lemma bound on prob of large Mj} 
Let $\delta_{n}$  and $a_{n}$ satisfy \eqref{eq: deltan and an} 
and assumption~$\boldsymbol{(ms)}$ holds. Then 
\begin{equation*}
\frac{1}{n} \sum_{i=1}^{n} \ind\big\{U_{ji} \not \in [\delta_n, 1-\delta_n] 
 \text{ or } M_{j}(\Xb_i) > a_n\Big\}   
= O_{P}\big(\tfrac{1}{n^{1/\lambda}} \big),  
\end{equation*}
which further implies that 
%
\begin{equation*} 
\Pr\bigg(\sum_{i=1}^{n} \ind\big\{U_{ji} \not \in [\delta_n, 1-\delta_n] 
 \text{ or } M_{j}(\Xb_i) > a_n\Big\}   
 \leq n^{1-1/\lambda}\log n \bigg) \ntoinfty 1. 
\end{equation*}
\end{lemma}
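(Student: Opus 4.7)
The plan is to handle the two events in the indicator separately using a union bound, then apply Markov's inequality twice, with the first argument exploiting the probability integral transform and the second exploiting the $r$-th moment condition from assumption $\boldsymbol{(ms)}$.

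First I would write
\[
\mathbf{1}\big\{U_{ji} \notin [\delta_n,1-\delta_n] \text{ or } M_j(\mathbf{X}_i) > a_n \big\}
\leq \mathbf{1}\{U_{ji} \notin [\delta_n,1-\delta_n]\} + \mathbf{1}\{M_j(\mathbf{X}_i) > a_n\}
\]
and bound the expectations of the two resulting averages separately. Because $F_{j\varepsilon}$ is continuous, $U_{ji} = F_{j\varepsilon}(\varepsilon_{ji})$ is uniform on $[0,1]$, so $\Pr(U_{ji} \notin [\delta_n, 1-\delta_n]) = 2\delta_n = 2 n^{-1/\lambda}$. For the second term, assumption $\boldsymbol{(ms)}$ gives $\mathsf{E}[M_j(\mathbf{X})]^r < \infty$, so Markov's inequality yields
\[
\Pr\big(M_j(\mathbf{X}_i) > a_n\big) \leq \frac{\mathsf{E}[M_j(\mathbf{X})]^r}{a_n^r} = \frac{\mathsf{E}[M_j(\mathbf{X})]^r}{n^{1/\lambda_x}}.
\]
Since $\lambda_x \leq \lambda$, this is $O(n^{-1/\lambda})$ as well.

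Combining, the expectation of the average in the lemma is bounded by $C\,n^{-1/\lambda}$ for some finite constant $C$, so the first claim $O_P(n^{-1/\lambda})$ follows directly from Markov's inequality. For the second claim, apply Markov's inequality to the sum itself:
\[
\Pr\bigg(\sum_{i=1}^n \mathbf{1}\{\cdots\} > n^{1-1/\lambda} \log n \bigg) \leq \frac{n \cdot C n^{-1/\lambda}}{n^{1-1/\lambda}\log n} = \frac{C}{\log n} \xrightarrow[n \to \infty]{} 0,
\]
which gives exactly the stated probability bound.

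There is no real obstacle here — the only subtlety is that one must verify that $U_{ji}$ genuinely has the uniform distribution (which follows from continuity of $F_{j\varepsilon}$, guaranteed by the continuity of $f_{j\varepsilon}$ in assumption $(\mathbf{F}_{j\varepsilon})$) and that the choice of $a_n$ combined with $\lambda_x \leq \lambda$ makes the marginal-on-$X$ tail bound no worse than the marginal-on-$U$ tail bound, so that both contributions are of the same order $n^{-1/\lambda}$.
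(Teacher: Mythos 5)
Your proof is correct and follows essentially the same route as the paper: a union bound on the two events, the bound $2\delta_n$ for the uniform variable $U_{ji}$, Markov's inequality applied to $M_j^r(\Xb_i)$ for the covariate term, and then Markov's inequality once more on the non-negative average (your explicit verification of the second displayed claim via $C/\log n \to 0$ is the step the paper leaves implicit). No gaps.
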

\begin{proof}
Denote 
\[
 p_{n} = \Pr\big(U_{ji} \not \in [\delta_n, 1-\delta_n] 
\text{ or } M_{j}(\Xb_i) > a_n\big) 
\]
and note that thanks to~\eqref{eq: deltan and an} and Markov's inequality 
(applied to $M_{j}^{r}(\Xb_{i})$)
\begin{align*}
\notag 
 p_{n}   &\leq \Pr\big(U_{ji} \not \in [\delta_n, 1-\delta_n] \big)
 + \Pr\big(M_{j}(\Xb_i) > a_n \big) 
\\
& \leq 2\delta_n + \E\, \tfrac{M_{j}^{r}(\Xb_{i})}{a_n^{r}} 
= O\big(\tfrac{1}{n^{1/\lambda}}\big) 
+ O\big(\tfrac{1}{n^{1/\lambda_{x}}} \big)
= O\big(\tfrac{1}{n^{1/\lambda}}\big). 
%
\end{align*}
%

Now as the random variable $\frac{1}{n} \sum_{i=1}^{n} \ind\big\{U_{ji} \not \in [\delta_n, 1-\delta_n] 
 \text{ or } M_{j}(\Xb_i) > a_n\Big\}$ is non-negative one can 
use once more Markov's inequality to conclude that 
\[
 \frac{1}{n} \sum_{i=1}^{n} \ind\big\{U_{ji} \not \in [\delta_n, 1-\delta_n] 
 \text{ or } M_{j}(\Xb_i) > a_n\Big\} = O_{P}(p_{n}). 
\]


%
\end{proof}

\subsection{Some results on statistics with ranks calculated 
from residuals}

%

%
\begin{lemma}\label{lemma: convergence in probability of functions of ranks}
Suppose that assumptions $(\mathbf{F}_{j\eps})$ and $\boldsymbol{(ms)}$ 
hold and that $\varphi$ is a~$\mathcal{J}$-function.
Then 
\[
 \frac{1}{n} \suman \varphi(\Ubhat_{i})  \inPr \Es \varphi(\Ub).
\]
\end{lemma}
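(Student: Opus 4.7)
My plan is to compare the sum at the estimated pseudo-observations with its counterpart at the true uniforms $\Ub_i$ and then invoke a classical law of large numbers. Write
\[
\frac{1}{n}\suman \varphi(\Ubhat_{i}) - \Es\varphi(\Ub) \;=\; \underbrace{\frac{1}{n}\suman \bigl[\varphi(\Ubhat_{i})-\varphi(\Ub_{i})\bigr]}_{\Delta_n} \;+\; \underbrace{\frac{1}{n}\suman\varphi(\Ub_{i}) - \Es\varphi(\Ub)}_{\xi_n}.
\]
The term $\xi_n$ tends to zero almost surely by Kolmogorov's SLLN: the $\Ub_i$ are i.i.d.\ copies of $\Ub$, and integrability of $\varphi(\Ub)$ follows from the $\mathcal{J}$-bound $|\varphi(\ub)|\le\sum_j M_1/[\min(u_j,1-u_j)]^\eta$ with $\eta<1$ together with the uniform marginals, since $\Es[\min(U_j,1-U_j)^{-\eta}]=2^{\eta}/(1-\eta)<\infty$.

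To handle $\Delta_n$ I would partition the indices along the lines of Lemma~\ref{lemma bound on prob of large Mj}: let $\mathrm{J}_n=\bigcap_{j=1}^d \mathrm{J}_{jn}^X$, with $\delta_n=n^{-1/\lambda}$ and $a_n=n^{1/(\lambda_x r)}$, and choose $\lambda\ge 1$ so that $\eta<1/\lambda$ (feasible because $\eta<1$) and $\lambda_x\in(0,\lambda]$ so small that $n\,\Pr(M_j(\Xb)>a_n)=o(1)$ (using the $r$th moment condition in $\boldsymbol{(ms)}$). For $i\in\mathrm{J}_n^c$, since $\Uhat_{ji}\in[\tfrac{1}{n+1},\tfrac{n}{n+1}]$ the crude $\mathcal{J}$-bound gives $|\varphi(\Ubhat_i)|\le dM_1(n+1)^\eta$; combined with $|\mathrm{J}_n^c|\le n^{1-1/\lambda}\log n$ with probability tending to $1$ from Lemma~\ref{lemma bound on prob of large Mj}, this yields
\[
\tfrac{1}{n}\!\sum_{i\in\mathrm{J}_n^c}\!|\varphi(\Ubhat_i)|=O_P\!\bigl(n^{\eta-1/\lambda}\log n\bigr)=o_P(1).
\]
The companion sum $\tfrac{1}{n}\sum_{i\in\mathrm{J}_n^c}|\varphi(\Ub_i)|$ I would bound in expectation using the independence of $\epsb_i$ and $\Xb_i$: the $M_j$-induced exceedances factor as $\Es|\varphi(\Ub)|\cdot\Pr(M_j>a_n)=o(1)$, while the boundary exceedances $\{U_{ji}\notin[\delta_n,1-\delta_n]\}$ vanish by dominated convergence applied to the integrable envelope $|\varphi(\Ub)|$.

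For $i\in\mathrm{J}_n$ the heart of the argument is a uniform rate for $\Ubhat_i-\Ub_i$. Decomposing
\begin{align*}
\Uhat_{ji}-U_{ji}
&=\tfrac{n}{n+1}\bigl[\widehat{F}_{j\hateps}(\hateps_{ji})-\widehat{F}_{j\eps}(\hateps_{ji})\bigr]+\tfrac{n}{n+1}\bigl[\widehat{F}_{j\eps}(\hateps_{ji})-F_{j\eps}(\hateps_{ji})\bigr]\\
&\quad+\tfrac{n}{n+1}\bigl[F_{j\eps}(\hateps_{ji})-F_{j\eps}(\eps_{ji})\bigr]-\tfrac{1}{n+1}U_{ji},
\end{align*}
I would control the second term by Glivenko--Cantelli, the third term via the mean-value theorem combined with $f_{j\eps}(x)(1+|x|)\le K$ (from $(\mathbf{F}_{j\eps})$) and the Taylor expansion $|\hateps_{ji}-\eps_{ji}|=O_P((1+|\eps_{ji}|)M_j(\Xb_i)/\sqrt n)=O_P(a_n/\sqrt n)$ on $\mathrm{J}_n$ (from $\sqrt n$-consistency of $\hatthetab_j$ and $\boldsymbol{(ms)}$), and the first term by an oscillation bound for the empirical process over a rank shift of order $a_n/\sqrt n$. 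These estimates together should give $\sup_{i\in\mathrm{J}_n}\|\Ubhat_i-\Ub_i\|\to 0$ polynomially, and in particular place $\Ubhat_i$ inside the enlarged cube $[\tfrac12\delta_n,1-\tfrac12\delta_n]^d$ for every $i\in\mathrm{J}_n$ with probability tending to one; uniform continuity of $\varphi$ on this cube then delivers the required $o_P(1)$ control of the good-index contribution.

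The principal obstacle is this last step: the $\mathcal{J}$-class carries no Lipschitz or derivative information, yet $\varphi$ is evaluated on a family of shrinking cubes $[\tfrac12\delta_n,1-\tfrac12\delta_n]^d$ on which the modulus of uniform continuity can deteriorate as $\delta_n\to 0$. The polynomial rate of $\sup_{i\in\mathrm{J}_n}\|\Ubhat_i-\Ub_i\|$ must therefore be shown to beat the potential blow-up of this modulus, which forces a delicate balancing of the exponents $\lambda$, $\lambda_x$ and the $\sqrt n$ rate coming from $\boldsymbol{(ms)}$. Getting this balance right, and confirming that the bound is uniform over all $\mathcal{J}$-functions with the given $\eta$, is where the technical effort will concentrate.
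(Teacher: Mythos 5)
Your skeleton is the same as the paper's (law of large numbers for $\xi_n$, then a good/bad index split driven by $\delta_n$ and $a_n$ as in Lemma~\ref{lemma bound on prob of large Mj}, uniform closeness of $\Ubhat_i$ to $\Ub_i$ on the good set, continuity of $\varphi$), but two of your steps do not go through as written.

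First, your bound on the bad-index contribution is too crude. You bound each term by $dM_1(n+1)^\eta$ and multiply by the count $n^{1-1/\lambda}\log n$, arriving at $O_P(n^{\eta-1/\lambda}\log n)$ and the requirement $\eta<1/\lambda$. But $\lambda$ is not free: the uniform control of $\Ubhat_i-\Ub_i$ on $\mathrm{J}_n^X$ rests on Lemma~\ref{lemma Ujhat minus Ujtilde}, whose hypotheses force $\lambda>2\bigl(1-\beta+\tfrac{1}{r-1}\bigr)$; with $r=2$ this already means $\lambda>3$, hence $1/\lambda<1/3$, and your bound diverges for any $\eta\in[1/3,1)$, which the class $\mathcal{J}$ permits. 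The paper avoids this by exploiting that the $\widehat{U}_{ji}$, $i=1,\dotsc,n$, are distinct ranks $i/(n+1)$: the sum of $[\min\{\widehat{U}_{ji},1-\widehat{U}_{ji}\}]^{-\eta}$ over any $m$ indices is at most $2\sum_{i\le m/2}(i/(n+1))^{-\eta}=O(n^{\eta}m^{1-\eta})$, which after dividing by $n$ and inserting $m=dn^{1-1/\lambda}\log n$ gives $O\bigl(n^{-(1-\eta)/\lambda}(\log n)^{1-\eta}\bigr)=o(1)$ for every $\eta<1$ and every $\lambda\ge1$, with no extra constraint. Your argument needs this order-statistics refinement.

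Second, the step you flag as the "principal obstacle" is indeed unresolved in your write-up, but the resolution is not a delicate exponent balance against the modulus of continuity on the shrinking cube $[\delta_n/2,1-\delta_n/2]^d$ — that route genuinely fails, since a $\mathcal{J}$-function carries no quantitative modulus information. The paper instead inserts a second, \emph{fixed} truncation level $\delta>0$: on $\mathrm{J}_n^X\cap\mathrm{J}_\delta$ (where $\Ub_i\in[\delta,1-\delta]^d$) one only ever needs uniform continuity of $\varphi$ on the fixed compact cube $[\delta/2,1-\delta/2]^d$, combined with $\max_{i\in\mathrm{J}_n^X}\|\Ubhat_i-\Ub_i\|=o_P(1)$; on $\mathrm{J}_n^X\cap\mathrm{K}_\delta$ both $\tfrac1n\sum|\varphi(\Ubhat_i)|$ and $\tfrac1n\sum|\varphi(\Ub_i)|$ are $O(\delta^{1-\eta})$ (the former again via the rank argument, the latter by the law of large numbers), and one lets $\delta\to0$ only after $n\to\infty$. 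Incorporating this fixed-$\delta$ three-term decomposition, together with the rank-based summation above, closes both gaps; only a qualitative $o_P(1)$ closeness on $\mathrm{J}_n^X$ is then needed, not a polynomial rate.
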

\begin{proof}
As $\varphi$ is a $\mathcal{J}$-function, it is easy to show that 
the expectation $\Es \varphi(\Ub)$  
exists and is finite. Thus thanks to the law of large numbers it is sufficient to show 
\begin{equation} \label{eq: diff of J}
 D_{n} = \left|\frac{1}{n} \suman \varphi(\Ubhat_{i}) 
 - \frac{1}{n} \suman \varphi(\Ub_{i}) 
  \right| \inPr 0.
\end{equation}

Let $\mathrm{J}_{jn}^{X}$ and $\delta_{n}$  be as in~\eqref{eq: Jjdeltan} and 
\eqref{eq: deltan and an}, where $\lambda$ and $\lambda_{x}$ are chosen 
so that they satisfy the assumptions of 
Lemma~\ref{lemma Ujhat minus Ujtilde}. Then this lemma together 
with the standard Glivenko-Cantelli theorem for the empirical distribution 
function~$\Fhat_{j\eps}$ implies that 
\begin{align}
 \notag 
 \max_{j \in \{1,\dotsc,d\}} &\max_{i \in \mathrm{J}_{jn}^{X}} 
 \big|\Uhat_{ji} - U_{ji}\big|  
\\
\label{eq: Uhat close to U in pr}
& \leq 
 \max_{j \in \{1,\dotsc,d\}} \max_{i \in \mathrm{J}_{jn}^{X}} 
 \big|\Uhat_{ji} - \widetilde{U}_{ji}\big| + 
\max_{j \in \{1,\dotsc,d\}} \max_{i \in \mathrm{J}_{jn}^{X}} 
 \big|\widetilde{U}_{ji} - U_{ji}\big|
  = o_{P}\big(1\big).    
\end{align}
Now introduce
\begin{equation} \label{eq: JdeltanX and KdeltanX}
\mathrm{J}_{n}^{X} = \cap_{j=1}^{d} \mathrm{J}_{jn}^{X}, 
\qquad \text{and} \qquad 
\mathrm{K}_{n}^{X} = \{1,\dotsc,n\}\setminus \mathrm{J}_{n}^{X}   
\end{equation}
and note that with the help of \eqref{eq: Uhat close to U in pr} 
\begin{equation} \label{eq: Ubhat close to Ub in pr}
 \max_{i \in \mathrm{J}_{n}^{X}} 
 \big\|\Ubhat_{i} - \Ub_{i}\big\| = o_{P}(1).    
\end{equation}
As the above equation is not guaranteed for $i \in \mathrm{K}_{n}^{X}$, 
we need to take care about the sets of indices 
$\mathrm{J}_{n}^{X}$ and $\mathrm{K}_{n}^{X}$ separately. That is why  
we bound $D_{n}$ given by \eqref{eq: diff of J} as 
\begin{equation}
\label{eq: bound on diff J 1}
 D_{n} \leq 
\frac{1}{n} \sum_{i  \in \mathrm{K}_{n}^{X}} \big|\varphi(\Ubhat_{i})\big| 
+ 
 \frac{1}{n} \sum_{i \in \mathrm{K}_{n}^{X}} \big|\varphi(\Ub_{i})\big| 
+ \bigg|
\frac{1}{n} \sum_{i \in \mathrm{J}_{n}^{X}} \varphi(\Ubhat_{i}) - \frac{1}{n} \sum_{i \in \mathrm{J}_{n}^{X}} \varphi(\Ub_{i})
  \bigg|. 
%
\end{equation}
In what follows we show that each term on the right-hand side of 
\eqref{eq: bound on diff J 1} is asymptotically negligible. 
\smallskip 

\noindent \textit{Dealing with the first term in \eqref{eq: bound on diff J 1}}


As $\varphi$ is a $\mathcal{J}$-function one can bound 
\begin{equation*}
  \frac{1}{n} \sum_{i \in \mathrm{K}_{n}^{X}} \left| \varphi(\Ubhat_{i})\right|
  \leq   \sum_{j=1}^{d} 
 \frac{M_{1}}{n} \sum_{i  \in \mathrm{K}_{n}^{X}}
   \frac{1}{\big[\min\{\widehat{U}_{ji},1-\widehat{U}_{ji}\}\big]^{\eta}}\,. 
\end{equation*}
Now by  Lemma~\ref{lemma bound on prob of large Mj} 
(with probability going to one) 
there are at most $d n^{1-1/\lambda}\log n$ indices~$i$ 
for which there exists $j \in \{1,\dotsc,d\}$ such that 
$U_{ji} \not \in [\delta_n,1-\delta_n]$  or $M_{j}(\Xb_i) > a_n$.  
Thus one can choose the indices~$i$ for which 
$\frac{1}{[\min\{\widehat{U}_{ji},1-\widehat{U}_{ji}\}]^{\eta}}$ 
takes the biggest values and gets that 
(with probability going to one) 
\begin{align} 
\notag 
  \frac{1}{n} \sum_{i \in \mathrm{K}_{n}^{X}} \left|\varphi(\Ubhat_{i})\right| 
 &\leq \sum_{j=1}^{d} \frac{M_{1}}{n} \Bigg[\sum_{i=1}^{\lceil \frac{d}{2}n^{1-1/\lambda}\log n \rceil} 
  \frac{1}{\big(\tfrac{i}{n+1}\big)^{\eta}}
 + \sum_{i=\lfloor n - \frac{d}{2} n^{1 - 1/\lambda}\log n \rfloor}^{n} 
  \frac{1}{\big(1-\tfrac{i}{n+1}\big)^{\eta}}
 \Bigg] 
\\
\label{eq: sum KnX of phi at Uhat}
 & \leq 2\,d^2\,M_{1}\,n^{-(1-\eta)/\lambda} (\log n)^{1-\eta}\,(1+o(1)) = o(1). 
\end{align}

\medskip 

\noindent \textit{Dealing with the second term in \eqref{eq: bound on diff J 1}}
\nopagebreak 

Note that $\E \big|\varphi(\Ub_{i})\big| < \infty$ implies that 
\begin{align*}
 \E\,&\bigg[ \frac{1}{n} \sum_{i \in \mathrm{K}_{n}^{X}} \big|\varphi(\Ub_{i})\big|  
 \bigg] 
 = \E \Big[ \big|\varphi(\Ub_{i})\big|\,
 \ind\big\{\Ub_{i} \not \in [\delta_n,1-\delta_n]^d 
\ \text{or} \ \max_{1 \leq j \leq d} M_{j}(\Xb_i) > a_{n}\big\}  
\Big] 
\\
 &\leq \E \Big[ \big|\varphi(\Ub_{i})\big|\,
 \ind\big\{\Ub_{i} \not \in [\delta_n,1-\delta_n]^d\big\}
 \Big] 
+ \Es \big|\varphi(\Ub_{i})\big| \,  
 \Pr\Big(\max_{1 \leq j \leq d} M_{j}(\Xb_i) > a_{n}\Big)   
 \ntoinfty 0. 
\end{align*}
Thus 
$
 \frac{1}{n} \sum_{i \in \mathrm{K}_{n}^{X}} \big|\varphi(\Ub_{i})\big|  = o_{P}(1)
$
follows from Markov's inequality.

\medskip 

\noindent \textit{Dealing with the third term in \eqref{eq: bound on diff J 1}}

We use the continuity of the function~$\varphi$. 
To be able to do that we need to stay in the interior of $[0,1]^{d}$. 
Thus for a~given $\delta \in (0, 1/2)$ (that will be specified later on), consider the set
\begin{equation} \label{eq: Idelta}
 \mathbf{I}_{\delta} = \{\ub: \ub \in [\delta, 1-\delta]^{d}\}.
\end{equation}
and introduce the corresponding sets of indices
\begin{equation} \label{eq: Jdelta}
 \mathrm{J}_{\delta} = \big\{i \in \{1,\dotsc,n\}: \Ub_{i} \in \mathbf{I}_{\delta} \big\}, 
 \quad \mathrm{K}_{\delta} = \{1,\dotsc,n\}\setminus \mathrm{J}_{\delta}, 
\end{equation}
where for simplicity of notation we do not stress that both  $\mathrm{J}_{\delta}$ 
and $\mathrm{K}_{\delta}$ depends on~$n$. Now one can bound 
\begin{align}
\notag 
 \bigg|\frac{1}{n} &\sum_{i \in \mathrm{J}_{n}^{X}} \varphi(\Ubhat_{i}) 
   -  \frac{1}{n} \sum_{i \in \mathrm{J}_{n}^{X}} \varphi(\Ub_{i})\bigg|
\\ 
\label{eq: bound on diff J 2}  
& \leq 
 \frac{1}{n} \sum_{i \in \mathrm{J}_{n}^{X} \cap \mathrm{J}_{\delta} } 
\big|\varphi(\Ubhat_{i})  
  - \varphi(\Ub_{i})\big|
%
 + \frac{1}{n} \sum_{i  \in \mathrm{J}_{n}^{X} \cap \mathrm{K}_{\delta}} 
 \big|\varphi(\Ubhat_{i})\big| 
   +  \frac{1}{n} \sum_{i \in \mathrm{J}_{n}^{X} \cap \mathrm{K}_{\delta}} 
 \big|\varphi(\Ub_{i})\big|. 
\end{align}
Note that by the uniform continuity of the function~$\varphi(\cdot)$ on $[\delta/2,1-\delta/2]^d$ 
and \eqref{eq: Ubhat close to Ub in pr} 
one gets that the first term on the right-hand side of \eqref{eq: bound on diff J 2} 
converges to zero in probability. 

\smallskip 

To deal with the second term on the right-hand side of 
\eqref{eq: bound on diff J 2} note that thanks to~\eqref{eq: Ubhat close to Ub in pr}  
 with probability going to one 
\[
 \mathrm{J}_{n}^{X} \cap \mathrm{K}_{\delta} \subseteq 
 \big\{i \in \{1,\dotsc,n\}: \Ubhat_{i} \not \in [2\delta,1-2\delta]^{d}\big\} 
\]

%
%

Thus one can bound 
\begin{align*}
 \frac{1}{n} \sum_{i \in \mathrm{J}_{n}^{X} \cap \mathrm{K}_{\delta}}  \big|\varphi(\Ubhat_{i})\big|  
 &\leq \sum_{j=1}^{d} \frac{M_{1}}{n} \Bigg[\sum_{i=1}^{\lceil (n+1)2\delta \rceil} 
  \frac{1}{\big(\tfrac{i}{n+1}\big)^{\eta}}
 + \sum_{i=\lfloor n - (n+1)2\delta \rfloor}^{n} 
  \frac{1}{\big(1-\tfrac{i}{n+1}\big)^{\eta}}
 \Bigg] 
\\
& \leq 2\,d\,M_{1}\,\tfrac{(2\delta)^{1-\eta}}{1-\eta}\big(1 + o(1)\big), 
\end{align*}
which can be made arbitrarily small by taking $\delta$ small enough. 

\smallskip 

Finally with the help of law of large numbers the third term on the right-hand side of 
\eqref{eq: bound on diff J 2} can be  bounded by 
\begin{align} 
\notag 
  \frac{1}{n} \sum_{i \in \mathrm{J}_{n}^{X} \cap \mathrm{K}_{\delta}} \left|\varphi(\Ub_{i})\right| 
 & 
\leq \frac{1}{n} \sum_{i \in \mathrm{K}_{\delta}} \left|\varphi(\Ub_{i})\right| 
\\
\notag 
 & \leq     
\sum_{j=1}^{d} \frac{M_{1}}{n}\Bigg[
 \suman  
   \frac{1}{U_{ji}^{\eta}}\ind\{U_{ji} \leq \delta\} 
+  
\suman  
   \frac{1}{(1-U_{ji})^{\eta}}\ind\{U_{ji} \geq 1-\delta\} 
\Bigg]
\\
%
\notag 
 & = 2\,d\,M_{1}\, \big(\tfrac{\delta^{1-\eta}}{1-\eta} + o_{P}(1)\big), 
%
%
%
\end{align}
which can be also made arbitrarily small by taking $\delta$ sufficiently 
small and $n$ sufficiently large. 
%
\end{proof}

\begin{lemma} \label{lemma: as normality of functions of ranks} 
Suppose that assumptions $(\mathbf{F}_{j\eps})$ and $\boldsymbol{(ms)}$ hold. 
Let  $\varphi$ be a $\widetilde{\mathcal{J}}^{\beta_1, \beta_2}$-function such that 
$\E\, \{\varphi(\Ub)\} = 0$ and  
$\beta > \max\{\beta_{1} + \frac{1}{r-1},\beta_{2}\}$.  
Then  
\begin{equation}  \label{eq: asympt equiv score statistics}
 \frac{1}{\sqrt{n}} \suman \varphi(\Ubhat_{i})  
  = \frac{1}{\sqrt{n}} \suman \varphi(\Ubtilde_{i}) + o_{P}(1). 
\end{equation}
%
\end{lemma}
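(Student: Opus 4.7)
The plan is to split the sum in \eqref{eq: asympt equiv score statistics} according to the index sets $\mathrm{J}_n^X$ and $\mathrm{K}_n^X$ introduced in \eqref{eq: JdeltanX and KdeltanX}, with the constants $\lambda$ and $\lambda_x$ in \eqref{eq: deltan and an} chosen small enough (concretely $\lambda<2(1-\beta_1)$ and $\lambda_x$ sufficiently small given $r$) that Lemma~\ref{lemma bound on prob of large Mj} yields $|\mathrm{K}_n^X|\le dn^{1-1/\lambda}\log n$ with probability tending to one. Writing
\[ S_n=\frac{1}{\sqrt n}\sum_{i=1}^n\bigl(\varphi(\Ubhat_i)-\varphi(\Ubtilde_i)\bigr) = S_n^{J}+S_n^{K}, \]
with the evident notation, I would first dispose of $S_n^K$ by a direct order-statistic estimate and then handle $S_n^J$ by a first-order Taylor expansion of $\varphi$ combined with a sharp linearisation of $\Uhat_{ji}-\Utilde_{ji}$.

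For $S_n^K$, the $\widetilde{\mathcal{J}}^{\beta_1,\beta_2}$-bound $|\varphi(\ub)|\le\sum_j M_2/[\min\{u_j,1-u_j\}]^{\beta_1}$ combined with the order-statistic argument already used in \eqref{eq: sum KnX of phi at Uhat} gives
\[ \frac{1}{\sqrt n}\sum_{i\in\mathrm{K}_n^X}|\varphi(\Ubhat_i)|\le \frac{Cd^2M_2}{\sqrt n}\,(n+1)^{\beta_1}\,|\mathrm{K}_n^X|^{1-\beta_1}\le C\,n^{1/2-(1-\beta_1)/\lambda}(\log n)^{1-\beta_1}, \]
which is $o(1)$ as soon as $\lambda<2(1-\beta_1)$; the analogous bound for $\varphi(\Ubtilde_i)$ follows by the same ordering applied to $\{\Utilde_{ji}\}_{i=1}^n$.

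For $S_n^J$ I would write
\[ \varphi(\Ubhat_i)-\varphi(\Ubtilde_i)=\sum_{j=1}^d \varphi^{(j)}(\overline{\Ub}_i)\bigl(\Uhat_{ji}-\Utilde_{ji}\bigr), \]
with $\overline{\Ub}_i$ on the segment connecting $\Ubhat_i$ and $\Ubtilde_i$, and then invoke the auxiliary bound on $\Uhat_{ji}-\Utilde_{ji}$ used in the proof of Lemma~\ref{lemma: convergence in probability of functions of ranks}. A one-step linearisation of $\widehat{F}_{j\hateps}(\widehat{\eps}_{ji})-\widehat{F}_{j\eps}(\eps_{ji})$ into $f_{j\eps}(\eps_{ji})(\widehat{\eps}_{ji}-\eps_{ji})+(\widehat{F}_{j\hateps}-\widehat{F}_{j\eps})(\eps_{ji})$, combined with the $\sqrt n$-consistency of $\widehat{\thetab}_j$ in $(\boldsymbol{ms})$, yields uniformly in $i\in\mathrm{J}_n^X$ a bound of the form
\[ |\Uhat_{ji}-\Utilde_{ji}|\le \frac{C\,M_j(\Xb_i)}{\sqrt n}\,f_{j\eps}\bigl(F_{j\eps}^{-1}(U_{ji})\bigr)\bigl(1+|F_{j\eps}^{-1}(U_{ji})|\bigr)+r_{ni},\qquad \max_{i\in\mathrm{J}_n^X}\sqrt n\,|r_{ni}|=o_P(1). \]
Assumption $(\mathbf{F}_{j\eps})$ then bounds the density factor by $C\,U_{ji}^{\beta}(1-U_{ji})^{\beta}$, and the $\widetilde{\mathcal{J}}^{\beta_1,\beta_2}$-bound on $\varphi^{(j)}$ — together with the fact that on $\mathrm{J}_n^X$ the midpoint $\overline U_{j'i}$ has the same order as $U_{j'i}$ and is bounded away from the boundary by at least $\delta_n/2$ — gives $|\varphi^{(j)}(\overline{\Ub}_i)|\le C\,[U_{ji}(1-U_{ji})]^{-\beta_2}\sum_{j'}[\min\{U_{j'i},1-U_{j'i}\}]^{-\eta}$. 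Substituting, multiplying by $1/\sqrt n$, and using independence of $M_j(\Xb_i)$ from $\Ub_i$, the $j$-th contribution to $S_n^J$ becomes an average $n^{-1}\sum_{i\in\mathrm{J}_n^X}$ of an integrable random variable (integrability holding thanks to $\beta-\beta_2>0$, $\eta<1$, and $\E M_j(\Xb)<\infty$); together with the centering $\E\varphi(\Ub)=0$, which forces vanishing of the relevant limit expectations, the law of large numbers yields $S_n^J=o_P(1)$.

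The technical heart of the argument is balancing the singularities permitted in $\varphi$ and $\varphi^{(j)}$ against the smallness of $f_{j\eps}\circ F_{j\eps}^{-1}$ near the boundary and against the $r$-th moment of $M_j(\Xb)$. The hypothesis $\beta>\max\{\beta_1+\tfrac{1}{r-1},\beta_2\}$ is exactly what is needed: the slack $\beta-\beta_2>0$ keeps the boundary integrals finite after the Taylor step, while $\beta-\beta_1>\tfrac{1}{r-1}$ permits a simultaneous choice of $\lambda<2(1-\beta_1)$ (for the $S_n^K$ estimate) and of $\lambda<\lambda_x r$ (for the $M_j$-tail bound in Lemma~\ref{lemma bound on prob of large Mj}) via Hölder with conjugate exponents $r$ and $r/(r-1)$. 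I expect the most delicate point to be the proof of the uniform-in-$i$ linearisation of $\Uhat_{ji}-\Utilde_{ji}$ with remainder that still dies after multiplication by the possibly-singular $\varphi^{(j)}$; this will require a careful stochastic equicontinuity argument for the empirical process of the residuals combined with the monotone-tail condition in assumption~$(\mathbf{F}_{j\eps})$.
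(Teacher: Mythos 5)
Your decomposition into $S_n^J$ and $S_n^K$, the order-statistic bound on $S_n^K$ (requiring $\lambda<2(1-\beta_1)$), and the mean value expansion of $\varphi$ over $\mathrm{J}_n^X$ all match the paper's argument. The gap is in how you conclude $S_n^J=o_P(1)$. After the Taylor step the $j$-th contribution is
\[
\frac{1}{\sqrt n}\sum_{i\in\mathrm{J}_n^X}\varphi^{(j)}(\Ub_i^{*})\big(\Uhat_{ji}-\Utilde_{ji}\big),
\]
and each increment $\Uhat_{ji}-\Utilde_{ji}$ is of exact order $n^{-1/2}$, so this sum is an average $n^{-1}\sum_i$ of $O_P(1)$ terms; the law of large numbers then gives convergence to a limit of the form $\sqrt n(\hatthetab_j-\thetab_j)^{\top}$ times certain expectations, which is $O_P(1)$ but not $o_P(1)$. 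Two concrete defects: (a) by passing to the absolute-value bound on $|\Uhat_{ji}-\Utilde_{ji}|$ you discard the sign structure, and no bound on $|S_n^J|$ obtained that way can be better than $O_P(1)$; (b) the claim that the centering $\E\,\varphi(\Ub)=0$ ``forces vanishing of the relevant limit expectations'' is false --- the expectations that arise are $\E\big[\varphi^{(j)}(\Ub)f_{j\eps}(F_{j\eps}^{-1}(U_j))\big]$ and $\E\big[\varphi^{(j)}(\Ub)f_{j\eps}(F_{j\eps}^{-1}(U_j))F_{j\eps}^{-1}(U_j)\big]$, and these are not annihilated by centering $\varphi$ itself.

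The missing idea is an exact first-order cancellation. The linearisation of $\Uhat_{ji}-\Utilde_{ji}$ (Lemma~\ref{lemma Ujhat minus Ujtilde}) has two leading terms: one from the shift of the empirical marginal c.d.f.\ of the residuals, namely $f_{j\eps}(\eps_{ji})\,\E_{\Xb}\big[\tfrac{m'_j(\Xb;\thetab_j)}{s_j(\Xb;\thetab_j)}+\eps_{ji}\tfrac{s'_j(\Xb;\thetab_j)}{s_j(\Xb;\thetab_j)}\big]^{\top}(\hatthetab_j-\thetab_j)$, giving the term $A_n$ in \eqref{eq: An}; and one from the shift of the residuals themselves, $f_{j\eps}(\eps_{ji})(\hateps_{ji}-\eps_{ji})$, giving $B_n$ in \eqref{eq: Bn}, which after expanding $\hateps_{ji}-\eps_{ji}$ in $\hatthetab_j-\thetab_j$ carries the \emph{individual} factors $\tfrac{m'_j(\Xb_i;\thetab_j)}{s_j}$, $\tfrac{s'_j(\Xb_i;\thetab_j)}{s_j}$ with the opposite sign. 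Weighting by $\varphi^{(j)}$, averaging, and using the law of large numbers (the $\Xb_i$-dependent factors average to their $\E_{\Xb}$ counterparts), one finds $B_n=-A_n+o_P(1)$; only their sum is negligible, not each piece. This cancellation --- not the centering of $\varphi$ --- is what makes the lemma true, and without it your argument does not close. Your handling of the genuinely remainder-type term (the analogue of the paper's $C_n$, controlled via Shorack's inequality and Lemma~\ref{lemma linear bound for hatU}) is plausible, but it addresses only the small part of the problem.
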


\begin{proof}
Let $\mathrm{J}_{n}^{X}$ and $\mathrm{K}_{n}^{X}$ 
be defined as in \eqref{eq: JdeltanX and KdeltanX}. 
Then similarly as in~\eqref{eq: sum KnX of phi at Uhat} 
of the proof of Lemma~\ref{lemma: convergence in probability of functions of ranks} 
one can bound  
%
\begin{equation} \label{eq: sum of phi over Knx}
 \frac{1}{\sqrt{n}} \sum_{i \in \mathrm{K}_{n}^{X}} \varphi(\Ubhat_{i})   
  = O_{P}(n^{\frac{1}{2}-\frac{1-\beta_{1}}{\lambda}}\,\log n), 
\quad  \frac{1}{\sqrt{n}} \sum_{i \in \mathrm{K}_{n}^{X}} \varphi(\Ubtilde_{i}) 
  = O_{P}(\,n^{\frac{1}{2}-\frac{1-\beta_{1}}{\lambda}}\,\log n),    
\end{equation}
where the role of $\eta$ is now taken by $\beta_{1}$. 

In what follows we take $\lambda$ so that 
\[
 2( 1 - \beta + \tfrac{1}{r-1})< \lambda < 2(1-\beta_1)
\]
and $\lambda_{x}$ satisfies~\eqref{eq: lambdax lower bound}. 
Such choices of $\lambda$ and $\lambda_{x}$ guarantee that  
the right-hand sides of \eqref{eq: sum of phi over Knx}
are of order $o_{P}(1)$ and at the same time 
the assumptions of Lemma~\ref{lemma about hatFjhateps} are satisfied 
and one can make use of 
Lemmas~\ref{lemma Ujhat minus Ujtilde} and~\ref{lemma linear bound for hatU}. 
%
%

It is sufficient to show that 
\begin{equation*} 
 \frac{1}{\sqrt{n}} \sum_{i \in \mathrm{J}_{n}^{X}} \varphi(\Ubhat_{i})  
  = \frac{1}{\sqrt{n}} \sum_{i \in \mathrm{J}_{n}^{X}} \varphi(\Ubtilde_{i}) + o_{P}(1). 
\end{equation*}
Note that 
\[
 \mathrm{J}_{n}^{X}
 = \big\{i \in \{1,\dotsc,n\}: \Ub_{i} \in [\delta_n, 1-\delta_{n}]^{d},  
   \max_{1 \leq j \leq d} M_{j}(\Xb_{i}) \leq a_n\big\} ,   
\]
where $\delta_{n}$ and $a_n$ are given in \eqref{eq: deltan and an}. 

\smallskip 

Now by the mean value theorem
\begin{equation}
\label{eq: expansion of Jhat} 
 \frac{1}{\sqrt{n}} \sum_{i \in \mathrm{J}_{n}^{X}} \varphi(\Ubhat_{i})
 =  \frac{1}{\sqrt{n}} \sum_{i \in \mathrm{J}_{n}^{X}} \varphi(\Ubtilde_{i})
 + \sum_{j=1}^{d} \frac{1}{\sqrt{n}} \sum_{i \in \mathrm{J}_{n}^{X}}  \varphi^{(j)}(\Ub_{i}^{*})
    \big(\widehat{U}_{ji} - \Utilde_{ji}\big),
\end{equation}
where $U_{ji}^{*}$ lies between $\widehat{U}_{ji}$ and $\widetilde{U}_{ji}$. Thus to prove the lemma 
it is sufficient to show that 
the second term on the right-hand side of \eqref{eq: expansion of Jhat} diminishes in 
probability.

With the help of Lemma~\ref{lemma Ujhat minus Ujtilde} 
for a fixed $j \in \{1,\dotsc,d\}$ one gets 
\begin{equation*} 
 \frac{1}{\sqrt{n}} \sum_{i \in \mathrm{J}_{n}^{X}} \varphi^{(j)}(\Ub_{i}^{*})
    \big(\Uhat_{ji} - \Utilde_{ji}\big) 
 = A_{n} + B_{n} + C_{n}, 
\end{equation*}
where
\begin{align}
\label{eq: An}
 A_{n} &=  \frac{1}{\sqrt{n}} \sum_{i \in \mathrm{J}_{n}^{X}} 
    \varphi^{(j)}(\Ub_{i}^{*})  f_{j\eps}(\eps_{ji})\Big\{\E_{\Xb}\big[\tfrac{m'_{j}(\Xb, \thetab_{j})}{s_{j}(\Xb; \thetab_{j})}\big] + \eps_{ji}\,\E_{\Xb}\big[\tfrac{s'_{j}(\Xb; \thetab_{j})}{s_{j}(\Xb; \thetab_{j})}\big] \Big\}\tr (\widehat{\thetab}_{j}-\thetab_{j}),
\\
\label{eq: Bn}
B_{n} &=  \frac{1}{\sqrt{n}} \sum_{i \in \mathrm{J}_{n}^{X}} 
 \varphi^{(j)}(\Ub_{i}^{*}) f_{j\eps}(\eps_{ji})\big(\hateps_{ji} - \eps_{ji}\big),
\\
\label{eq: Cn}
C_{n} &=  \frac{o_{P}(1)}{n} \sum_{i \in \mathrm{J}_{n}^{X}} 
\varphi^{(j)}(\Ub_{i}^{*})U_{ji}^{\beta-\gamma}(1-U_{ji})^{\beta-\gamma} \big(1 + M_{j}(\Xb_i)\big),     
\end{align}
and $\gamma > 0$ is taken sufficiently small so that $\beta - \gamma > \beta_2$. 
In what follows we show that $C_{n}$ 
and $A_{n} + B_{n}$ are asymptotically negligible. 

\medskip 

\noindent \textit{Dealing with $C_{n}$}. 
With the help of
Lemma~A3 of \citet{shorack1972functions} and Lemma~\ref{lemma linear bound for hatU} for each $\eps > 0$ 
there exists a positive constant $L$ 
such that the quantity~$C_{n}$ given by \eqref{eq: Cn} can be with 
probability at least $1-\eps$ bounded  by 
\begin{align}
\notag 
\big|C_{n}\big| &\leq \frac{o_{P}(1)}{n} 
  \sum_{i \in \mathrm{J}_{n}^{X}} 
 \big|\varphi^{(j)}(\Ub_{i}^{*})(U_{ji}^{*})^{\beta_2}(1-U_{ji}^{*})^{\beta_2}\big|\,   
\frac{U_{ji}^{\beta-\gamma}(1-U_{ji})^{\beta-\gamma}}{(U_{ji}^{*})^{\beta_2}(1-U_{ji}^{*})^{\beta_2}}  
\big(1 + M_{j}(\Xb_{i})\big)
\\
\notag 
&\leq \frac{o_{P}(1)}{n} 
  \sum_{i \in \mathrm{J}_{n}^{X}} 
  \frac{M_1}{\big[\min_{j=1,\dotsc,d}{\min\{U_{ji}^{*}, 1-U_{ji}^{*}\}}\big]^{\eta}}
 \,\frac{1}{L^{\beta_{2}}} \big(1 + M_{j}(\Xb_{i})\big)
\\ 
\notag 
&= \frac{o_{P}(1)}{n} 
  \sum_{i \in \mathrm{J}_{n}^{X}} 
  \frac{M_1}{\big[\min_{j=1,\dotsc,d}{\min\{U_{ji}, 1-U_{ji}\}}\big]^{\eta}} 
 \,\frac{1}{L^{\beta_{2}+\eta}} \big(1 + M_{j}(\Xb_{i})\big)
\\
\notag 
&  = o_{P}(1) \, O_{P}(1) = o_{P}(1), 
\end{align}
where the law of large numbers  is used on the last line. 

Thus one can concentrate on the quantities $A_{n}$ and $B_{n}$. 

\medskip 

\noindent \textit{Dealing with $A_{n}$}. Note that $A_{n}$ given by \eqref{eq: An} 
can be rewritten as 
\begin{align}
 \notag 
 A_{n} &=  \sqrt{n}\,(\widehat{\thetab}_{j}-\thetab_{j})\tr\,\E_{\Xb}\big[\tfrac{m'_{j}(\Xb, \thetab_{j})}{s_{j}(\Xb; \thetab_{j})}\big]\, \frac{1}{n} \sum_{i \in \mathrm{J}_{n}^{X}} 
    \varphi^{(j)}(\Ub_{i}^{*})  f_{j\eps}(\eps_{ji})
\\
\label{eq: An rewritten}
& \quad + \sqrt{n}\,(\widehat{\thetab}_{j}-\thetab_{j})\tr\,\E_{\Xb}\big[\tfrac{s'_{j}(\Xb; \thetab_{j})}{s_{j}(\Xb; \thetab_{j})}\big]\, \frac{1}{n} \sum_{i \in \mathrm{J}_{n}^{X}} 
    \varphi^{(j)}(\Ub_{i}^{*})  f_{j\eps}(\eps_{ji}) \eps_{ji}. 
\end{align}
Now analogously as in the proof of 
Lemma~\ref{lemma: convergence in probability of functions of ranks} one 
can show that 
%
\begin{align} 
\notag 
 \frac{1}{n} \sum_{i \in \mathrm{J}_{n}^{X}} 
    \varphi^{(j)}(\Ub_{i}^{*}) f_{j\eps}(\eps_{ji}) 
 &= \frac{1}{n} \sum_{i \in \mathrm{J}_{n}^{X}} 
    \varphi^{(j)}(\Ub_{i}^{*}) f_{j\eps}\big(F_{j\eps}^{-1}(U_{ji}) \big) 
\\
\label{eq: sum of deriv of J location}
&= 
 \Es \big[\varphi^{(j)}(\Ub) f_{j\eps}\big(F_{j\eps}^{-1}(U_{j}) \big) \big] + o_{P}(1) 
\end{align}
and also 
\begin{equation}
\label{eq: sum of deriv of J scale}
 \frac{1}{n} \sum_{i \in \mathrm{J}_{n}^{X}} 
  \varphi^{(j)}(\Ub_{i}^{*}) f_{j\eps}(\eps_{ji})\,\eps_{ji} = 
 \Es \big[ \varphi^{(j)}(\Ub) f_{j\eps}\big(F_{j\eps}^{-1}(U_{j}) \big)\,F_{j\eps}^{-1}(U_{j}) 
 \big] 
 + o_{P}(1).  
\end{equation}
Combining \eqref{eq: An rewritten}, \eqref{eq: sum of deriv of J location},  
\eqref{eq: sum of deriv of J scale} and the fact that the estimator $\hatthetab_{j}$ 
is $\sqrt{n}$-consistent  yields 
\begin{align} 
\notag 
 A_{n} &=  
\sqrt{n}\,(\widehat{\thetab}_{j}-\thetab_{j})\tr \,\Es \big[\varphi^{(j)}(\Ub) f_{j\eps}\big(F_{j\eps}^{-1}(U_{j}) \big)\big]  
\E_{\Xb}\big[\tfrac{m'_{j}(\Xb, \thetab_{j})}{s_{j}(\Xb; \thetab_{j})}\big]
\\
\label{eq: An asymptotically} 
& \quad +
\sqrt{n}\,(\widehat{\thetab}_{j}-\thetab_{j})\tr\, \Es \big[\varphi^{(j)}(\Ub) f_{j\eps}\big(F_{j\eps}^{-1}(U_{j}) \big) 
 F_{j\eps}^{-1}(U_j)\big]\E_{\Xb}\big[\tfrac{s'_{j}(\Xb; \thetab_{j})}{s_{j}(\Xb; \thetab_{j})}\big]   + o_{P}(1).    
\end{align}

\noindent \textit{Dealing with $B_{n}$}. Now have a look at the term~$B_{n}$ defined in \eqref{eq: Bn}. One can proceed analogously as above and show that 
\begin{equation} \label{eq: Bn simplified}
 B_{n} = \frac{1}{\sqrt{n}} \sum_{i \in \mathrm{J}_{n}^{X}} \varphi^{(j)}(\Ub_{i})
    f_{j\eps}(\eps_{ji})\big(\hateps_{ji} - \eps_{ji}\big) + o_{P}(1)
 = B_{n1} + B_{n2} + o_{P}(1),  
\end{equation}
where 
\begin{align*}
 B_{n1} &= \frac{1}{\sqrt{n}} \sum_{i \in \mathrm{J}_{n}^{X}} \varphi^{(j)}(\Ub_{i})
    f_{j\eps}\big(F_{j\eps}^{-1}(U_{ji})\big)\big[\tfrac{m_{j}(\Xb_{i};\thetab_{j}) 
     - m_{j}(\Xb_{i}; \widehat{\thetab}_{j})}{s_{j}(\Xb_{i}; \widehat{\thetab}_{j})}\big],  \\ 
%
 B_{n2} &= \frac{1}{\sqrt{n}} \sum_{i \in \mathrm{J}_{n}^{X}} \varphi^{(j)}(\Ub_{i})
   f_{j\eps}\big(F_{j\eps}^{-1}(U_{ji})\big)F_{j\eps}^{-1}(U_{ji})
 \,\big[\tfrac{
s_{j}(\Xb_{i}; \thetab_{j}) - s_{j}(\Xb_{i}; \widehat{\thetab}_{j})}{s_{j}(\Xb_{i}; \widehat{\thetab}_{j})}\big]. 
\end{align*}
Now similarly as in the proof of Lemma~\ref{lemma about hatFjhateps} 
one can show that 
%
\begin{align}
\notag 
 B_{n1} &= \sqrt{n}(\thetab_{j} -\widehat{\thetab}_{j})\tr\,\frac{1}{n} \sum_{i \in \mathrm{J}_{n}^{X}} \varphi^{(j)}(\Ub_{i})
    f_{j\eps}\big(F_{j\eps}^{-1}(U_{ji})\big)\,\tfrac{m'_{j}(\Xb_{i};\thetab_{j}) 
     }{s_{j}(\Xb_{i}; \widehat{\thetab}_{j})} + o_{P}(1)  \\ 
 \label{eq: Bn1} 
&= \sqrt{n}\,\big(\thetab_{j} - \widehat{\thetab}_{j}\big)\tr\,\E \big[\varphi^{(j)}(\Ub)
    f_{j\eps}\big(F_{j\eps}^{-1}(U_{j})\big)\big]\,\E\big[\tfrac{m'_{j}(\Xb;\thetab_{j}) 
     }{s_{j}(\Xb; \thetab_{j})}\big] + o_{P}(1)
\end{align}
and analogously also 
\begin{equation}
 \label{eq: Bn2}
 B_{n2} = \sqrt{n}\,\big(\thetab_{j} - \widehat{\thetab}_{j}\big)\tr\,\E \big[\varphi^{(j)}(\Ub)
    f_{j\eps}\big(F_{j\eps}^{-1}(U_{j})\big)\,F_{j\eps}^{-1}(U_{j})\big]\,\E\Big[\tfrac{s'_{j}(\Xb;\thetab_{j})}{s_{j}(\Xb;\thetab_{j})}\Big] + o_{P}(1).  
\end{equation}
%

Now \eqref{eq: An asymptotically}, \eqref{eq: Bn simplified}, \eqref{eq: Bn1} and \eqref{eq: Bn2} 
yields that $ B_{n} = - A_{n} + o_{P}(1)$,   which was to be proved. 
\end{proof}

The following lemma will be useful for copula families with `nicely bounded' 
score functions. 

\begin{lemma} \label{lemma: as normality of functions of ranks second} 
Suppose that assumptions $(\mathbf{F}_{j\eps})$ and $\boldsymbol{(ms)}$ hold. 
Let $\varphi$ be a $\widetilde{\mathcal{J}}^{0, 0}$-function such that 
$\E\, \{\varphi(\Ub)\} = 0$ and $\varphi^{(j)}$ is bounded for each $j \in \{1,\dotsc,p\}$. 
Then the statement of Lemma~\ref{lemma: as normality of functions of ranks} holds. 
\end{lemma}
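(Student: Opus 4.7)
The plan is to mirror the proof of Lemma~\ref{lemma: as normality of functions of ranks}, but to exploit the fact that $\varphi\in\widetilde{\mathcal{J}}^{0,0}$ makes $\varphi$ uniformly bounded on $(0,1)^{d}$ (apply the definition with $\beta_{1}=0$, which gives $|\varphi|\leq d\,M_{2}$) and, by the extra assumption, so are all the partial derivatives $\varphi^{(j)}$. This global boundedness removes the need for the factors $U_{ji}^{\beta-\gamma}(1-U_{ji})^{\beta-\gamma}$ that were previously used to absorb singularities of $\varphi^{(j)}$, and in particular the lower bound $\lambda>2\bigl(1-\beta+\tfrac{1}{r-1}\bigr)$ disappears. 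I would therefore choose $\lambda\in(1,2)$ together with $\lambda_{x}$ small enough that Lemmas~\ref{lemma Ujhat minus Ujtilde}, \ref{lemma linear bound for hatU} and~\ref{lemma about hatFjhateps} remain applicable even when $\beta=0$.

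With the sets $\mathrm{J}_{n}^{X}$ and $\mathrm{K}_{n}^{X}$ from~\eqref{eq: JdeltanX and KdeltanX}, the contribution over $\mathrm{K}_{n}^{X}$ is disposed of at once by the boundedness of $\varphi$ and Lemma~\ref{lemma bound on prob of large Mj}:
\[
\frac{1}{\sqrt{n}}\,\Big|\sum_{i\in\mathrm{K}_{n}^{X}}\bigl[\varphi(\Ubhat_{i})-\varphi(\Ubtilde_{i})\bigr]\Big|
\leq \frac{2\,\|\varphi\|_{\infty}}{\sqrt{n}}\,\#\mathrm{K}_{n}^{X}
= O_{P}\bigl(n^{1/2-1/\lambda}\log n\bigr)=o_{P}(1),
\]
since $\lambda<2$. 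This step replaces~\eqref{eq: sum of phi over Knx} and requires no further integrability of~$\varphi$.

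On $\mathrm{J}_{n}^{X}$ I would perform the mean value expansion~\eqref{eq: expansion of Jhat} and decompose the resulting error as $A_{n}+B_{n}+C_{n}$ exactly as in~\eqref{eq: An}--\eqref{eq: Cn}. The key simplification is that, using $\|\varphi^{(j)}\|_{\infty}<\infty$, the remainder now satisfies
\[
|C_{n}|\leq \frac{o_{P}(1)\,\|\varphi^{(j)}\|_{\infty}}{n}\sum_{i\in\mathrm{J}_{n}^{X}}\bigl(1+M_{j}(\Xb_{i})\bigr)=o_{P}(1)\cdot O_{P}(1)=o_{P}(1),
\]
so the vanishing weight $U_{ji}^{\beta-\gamma}(1-U_{ji})^{\beta-\gamma}$ from the previous proof is no longer needed. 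The linear terms $A_{n}$ and $B_{n}$ I would analyse exactly as in~\eqref{eq: An rewritten}--\eqref{eq: Bn2}: assumption $(\mathbf{F}_{j\eps})$, even with $\beta=0$, still bounds $f_{j\eps}\bigl(F_{j\eps}^{-1}(u)\bigr)$ and $|F_{j\eps}^{-1}(u)|\,f_{j\eps}\bigl(F_{j\eps}^{-1}(u)\bigr)$ uniformly in $u\in(0,1)$, so the boundedness of $\varphi^{(j)}$ makes the empirical averages in~\eqref{eq: sum of deriv of J location} and~\eqref{eq: sum of deriv of J scale} converge to their finite expectations by the law of large numbers; combined with the $\sqrt{n}$-consistency of $\widehat{\thetab}_{j}$ the same cancellation as before yields $A_{n}+B_{n}=o_{P}(1)$.

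The main obstacle I foresee is purely a bookkeeping one: to confirm that the lower bounds on $\lambda$ and $\lambda_{x}$ coming from the preparatory lemmas on $\Uhat_{ji}-\Utilde_{ji}$ and on $\Fhat_{j\hateps}$ are compatible with the upper bound $\lambda<2$ used above, uniformly in $\beta\in[0,1/2)$. Since those lemmas constrain $(\lambda,\lambda_{x})$ only through the marginal model assumptions $(\mathbf{F}_{j\eps})$ and $\boldsymbol{(ms)}$ and not through the tail of $\varphi$, which is now trivial thanks to the boundedness hypothesis, the set of admissible $(\lambda,\lambda_{x})$ is non-empty and the plan carries through.
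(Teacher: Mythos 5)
Your overall architecture is right, but the pivot of your argument --- choosing $\lambda\in(1,2)$ so that $\#\mathrm{K}_{n}^{X}=O_{P}(n^{1-1/\lambda}\log n)=o_{P}(\sqrt{n})$ --- is incompatible with the lemmas you still need on $\mathrm{J}_{n}^{X}$. The expansion $A_{n}+B_{n}+C_{n}$ that you invoke there rests on Lemma~\ref{lemma Ujhat minus Ujtilde}, whose hypotheses (inherited from Lemma~\ref{lemma about hatFjhateps}) include $\lambda>2\bigl(1-\beta+\tfrac{1}{r-1}\bigr)$; with $\beta=0$ this forces $\lambda>2+\tfrac{2}{r-1}>2$. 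That lower bound does not come from the tail behaviour of $\varphi$ at all --- it is what makes the truncation error from $\ind\{M_{j}(\Xb_{i})>a_{n}\}$ of order $o_{P}(n^{-1/2})$ relative to the weight $w(u)$ near $u=\delta_{n}$ (see the requirement $\tfrac12+\tfrac{\beta}{\lambda}<\tfrac{1}{\lambda_{x}}<r(\tfrac12-\tfrac{1-\beta}{\lambda})$ in that proof), so no choice of $\lambda_{x}$ can rescue it. Hence the set of admissible $(\lambda,\lambda_{x})$ in your plan is empty, contrary to your closing claim, and the crude bound $2\|\varphi\|_{\infty}\,\#\mathrm{K}_{n}^{X}/\sqrt{n}$ on the boundary indices genuinely diverges for the $\lambda$ you are forced to use: $\Pr\bigl(U_{ji}\notin[\delta_{n},1-\delta_{n}]\bigr)=2n^{-1/\lambda}$ exactly, so $\#\mathrm{K}_{n}^{X}$ is of order $n^{1-1/\lambda}\gg\sqrt{n}$ whenever $\lambda>2$.

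The paper's proof avoids this by exploiting the boundedness of $\varphi^{(j)}$ in a different place: it applies the mean value theorem to the \emph{whole} sum $\suman\bigl[\varphi(\Ubhat_{i})-\varphi(\Ubtilde_{i})\bigr]$ (legitimate precisely because $\varphi^{(j)}$ has no singularity at the boundary), and then on $\mathrm{K}_{n}^{X}$ it does not count indices but uses Lemma~\ref{lemma Ujhat minus Ujtilde second} --- which holds for \emph{all} $k$ and carries no constraint on $\lambda$ --- to get $|\Uhat_{ji}-\Utilde_{ji}|\leq O_{P}(n^{-1/2})\bigl(1+M_{j}(\Xb_{i})\bigr)$, whence
\[
\frac{1}{\sqrt{n}}\sum_{i\in\mathrm{K}_{n}^{X}}\bigl|\varphi^{(j)}(\Ub_{i}^{*})\bigr|\,\bigl|\Uhat_{ji}-\Utilde_{ji}\bigr|
\leq\frac{O_{P}(1)}{n}\sum_{i\in\mathrm{K}_{n}^{X}}\bigl(1+M_{j}(\Xb_{i})\bigr)=o_{P}(1)
\]
for \emph{any} $\lambda$, in particular for $\lambda$ large enough that the machinery on $\mathrm{J}_{n}^{X}$ applies. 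If you replace your treatment of $\mathrm{K}_{n}^{X}$ by this one and drop the constraint $\lambda<2$, the rest of your argument --- the simplification of $C_{n}$ via $\|\varphi^{(j)}\|_{\infty}<\infty$ and the cancellation $A_{n}+B_{n}=o_{P}(1)$ using $\sup_{u}f_{j\eps}\bigl(F_{j\eps}^{-1}(u)\bigr)\bigl(1+|F_{j\eps}^{-1}(u)|\bigr)<\infty$ --- goes through as you describe.
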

\begin{proof}
By the mean value theorem
\begin{align*}
 \frac{1}{\sqrt{n}} \suman \varphi(\Ubhat_{i})
=  \frac{1}{\sqrt{n}} \suman \varphi(\Ubtilde_{i})
+ \sum_{j=1}^{d} \frac{1}{\sqrt{n}} \suman   \varphi^{(j)}(\Ub_{i}^{*})
    \big(\widehat{U}_{ji} - \Utilde_{ji}\big). 
\end{align*}
Now take $\lambda > 2(1 + \frac{1}{r})$ and recall the sets of indices 
$\mathrm{J}_{n}^{X}$ of $\mathrm{K}_{n}^{X}$ introduced in~\eqref{eq: JdeltanX and KdeltanX}. 
Then 
\begin{align}
 \notag 
\frac{1}{\sqrt{n}}& \suman   \varphi^{(j)}(\Ub_{i}^{*})
    \big(\widehat{U}_{ji} - \Utilde_{ji}\big) 
\\
%
\label{eq: bounded score expansion}
&= 
 \frac{1}{\sqrt{n}} \sum_{i \in \mathrm{J}_{n}^{X}}   \varphi^{(j)}(\Ub_{i}^{*})
    \big(\widehat{U}_{ji} - \Utilde_{ji}\big)
 + \frac{1}{\sqrt{n}} \sum_{i \in \mathrm{K}_{n}^{X}}   \varphi^{(j)}(\Ub_{i}^{*})
    \big(\widehat{U}_{ji} - \Utilde_{ji}\big). 
\end{align}
Now with the help of Lemma~\ref{lemma Ujhat minus Ujtilde second} 
one can show that the second term on the right-hand side of 
\eqref{eq: bounded score expansion} can be bounded as 
the preceding equation is $o_{P}(1)$
%
\[
\frac{1}{\sqrt{n}} \sum_{i \in \mathrm{K}_{n}^{X}}   \big|\varphi^{(j)}(\Ub_{i}^{*})
    \big(\widehat{U}_{ji} - \Utilde_{ji}\big)\big| 
 \leq \frac{O_{P}(1)}{n} \sum_{i \in \mathrm{K}_{n}^{X}} \big(1+M_{j}(\Xb_{i})\big) = o_{P}(1), 
%
\]
where the last equation follows from Markov's inequality and 
\begin{align*}
 \E\bigg[ \frac{1}{n} \sum_{i \in \mathrm{K}_{n}^{X}} \big(1+M_{j}(\Xb_{i})\big) \bigg] 
 &= \E\big[\big(1+M_{j}(\Xb)\big)
\ind\big\{\Ub \not \in [\delta_n,1-\delta_n]^d 
\; \text{or} \; \max_{1 \leq j \leq d}M_{j}(\Xb) > a_{n}
\big\}\big] 
\\
&= o(1). 
\end{align*}

Finally the first term on the right-hand side of 
\eqref{eq: bounded score expansion}
can be handled analogously as in the proof of Lemma~\ref{lemma: as normality of functions of ranks}. 
\end{proof}

\begin{corollary} \label{corollary: as normality of functions of ranks} 
Suppose that assumptions of Lemma~\ref{lemma: as normality of functions of ranks} 
or Lemma~\ref{lemma: as normality of functions of ranks second}  
are satisfied. Then 
\begin{equation*} 
 \frac{1}{\sqrt{n}} \suman \varphi(\Ubhat_{i})  
  = \frac{1}{\sqrt{n}} \suman \widetilde{\varphi}(\Ub_{i}) + o_{P}(1),
\end{equation*}
where
\begin{align*}
 \widetilde{\varphi}(\ub) = \varphi(\ub)
  + \sum_{j=1}^{d} \int_{[0,1]^{d}} \big[\ind\{u_{j} \leq v_j\}-v_{j}\big] \varphi^{(j)}(\vb)\,\mathrm{d} C(\vb). 
\end{align*}
\end{corollary}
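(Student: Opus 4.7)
The plan is to chain Lemma~\ref{lemma: as normality of functions of ranks} (or Lemma~\ref{lemma: as normality of functions of ranks second} when the score derivatives are bounded) with a Hoeffding--type linearization that converts the sum at the unobserved pseudo--observations $\Ubtilde_{i}$ into the desired sum at the true quantiles $\Ub_{i}$. The cited lemma already supplies
\begin{equation*}
\tfrac{1}{\sqrt{n}}\suman \varphi(\Ubhat_{i}) = \tfrac{1}{\sqrt{n}}\suman \varphi(\Ubtilde_{i}) + o_{P}(1),
\end{equation*}
so only the step $\Ubtilde \mapsto \Ub$ remains; this is a Genest--Ghoudi--Rivest type expansion, rewritten to accommodate the unboundedness of the score.

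For this step I would start from a first--order Taylor expansion $\varphi(\Ubtilde_{i}) = \varphi(\Ub_{i}) + \sum_{j=1}^{d}\varphi^{(j)}(\Ub_{i}^{*})(\Utilde_{ji}-U_{ji})$ and use the identity
\begin{equation*}
\Utilde_{ji} - U_{ji} = \tfrac{1}{n+1}\sum_{k=1}^{n}\bigl[\ind\{U_{jk}\leq U_{ji}\} - U_{ji}\bigr] - \tfrac{U_{ji}}{n+1},
\end{equation*}
where the last summand contributes a uniform $O(n^{-1})$ that is negligible after the $n^{-1/2}$ normalisation. On the bulk set $\{\Ub_{i}\in [\delta_{n},1-\delta_{n}]^{d},\ \max_{j}M_{j}(\Xb_{i})\leq a_{n}\}$ the difference $\Ubtilde_{i}-\Ub_{i}$ is uniformly small by Lemma~\ref{lemma Ujhat minus Ujtilde}, so the $\widetilde{\mathcal{J}}^{\beta_{1},\beta_{2}}$ bound allows me to replace $\varphi^{(j)}(\Ub_{i}^{*})$ by $\varphi^{(j)}(\Ub_{i})$ at cost $o_{P}(1)$, in the same spirit as the treatment of $C_{n}$ in the proof of Lemma~\ref{lemma: as normality of functions of ranks}. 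The complementary boundary contribution is handled by the tail estimate~\eqref{eq: sum of phi over Knx}.

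After swapping the order of summation, the leading correction from index $j$ is the non--symmetric U--statistic
\begin{equation*}
\tfrac{1}{\sqrt{n}(n+1)}\sum_{i,k}\varphi^{(j)}(\Ub_{i})\bigl[\ind\{U_{jk}\leq U_{ji}\}-U_{ji}\bigr]
\end{equation*}
with kernel $h(\ub,\vb)=\varphi^{(j)}(\ub)[\ind\{v_{j}\leq u_{j}\}-u_{j}]$. Since $\E[\ind\{U_{j}'\leq u_{j}\}-u_{j}]=0$, the Hoeffding projections satisfy $h_{1}\equiv 0$ and
\begin{equation*}
h_{2}(\ub) = \int_{[0,1]^{d}}\bigl[\ind\{u_{j}\leq v_{j}\}-v_{j}\bigr]\varphi^{(j)}(\vb)\,\mathrm{d}C(\vb),
\end{equation*}
which is exactly the $j$-th summand in $\widetilde{\varphi}-\varphi$. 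Thus the U--statistic reduces to $\tfrac{1}{\sqrt{n}}\sum_{k}h_{2}(\Ub_{k}) + o_{P}(1)$, and summing over $j$ produces the desired $\widetilde{\varphi}$.

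The main technical obstacle is making the Hoeffding reduction rigorous when $\varphi^{(j)}$ is unbounded, since standard $L^{2}$ U--statistic arguments need not apply. I would handle this by localising to the set $\mathrm{J}_{n}^{X}$ of~\eqref{eq: JdeltanX and KdeltanX}, on which the $\widetilde{\mathcal{J}}^{\beta_{1},\beta_{2}}$ envelope is integrable and the class $\{\vb \mapsto \ind\{u\leq v_{j}\}:u\in[0,1]\}$ is a VC class, yielding uniform convergence with a sharp enough remainder; the complementary contribution is controlled by the same tail bounds used in the proof of Lemma~\ref{lemma: as normality of functions of ranks}. Under the bounded--score hypothesis of Lemma~\ref{lemma: as normality of functions of ranks second} the envelope is a constant and this step simplifies considerably.
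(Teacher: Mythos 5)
Your proposal is correct and its first step coincides exactly with the paper's: invoke Lemma~\ref{lemma: as normality of functions of ranks} (or Lemma~\ref{lemma: as normality of functions of ranks second}) to reduce everything to showing $\tfrac{1}{\sqrt{n}}\suman \varphi(\Ubtilde_{i}) = \tfrac{1}{\sqrt{n}}\suman \widetilde{\varphi}(\Ub_{i}) + o_{P}(1)$. For that remaining step the paper simply cites Lemma~2 of \cite{ogvp_testing_rao_2017} component-wise, whereas you open the black box and reconstruct the classical Ruymgaart/Genest--Ghoudi--Rivest argument: the exact identity $\Utilde_{ji}-U_{ji}=\tfrac{1}{n+1}\sum_{k}[\ind\{U_{jk}\le U_{ji}\}-U_{ji}]-\tfrac{U_{ji}}{n+1}$, a Taylor step, and the Hoeffding decomposition of the resulting two-index statistic, whose first projection vanishes and whose second projection is precisely the $j$-th summand of $\widetilde{\varphi}-\varphi$. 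That computation is right, and your recognition that the unbounded envelope forces a localization/truncation (the naive $L^{2}$ bound on the degenerate part need not hold when $\beta_{2}+\eta$ is not small) is the genuine technical content of the cited external lemma. Two minor corrections: the uniform smallness of $\Ubtilde_{i}-\Ub_{i}$ does not come from Lemma~\ref{lemma Ujhat minus Ujtilde} (which expands $\Uhat_{jk}-\Utilde_{jk}$); it is the elementary Glivenko--Cantelli/DKW bound $\sup_{i}|\Utilde_{ji}-U_{ji}|=O_{P}(n^{-1/2})$, refined near the boundary by Shorack-type weighted bounds. Likewise the restriction $M_{j}(\Xb_{i})\le a_{n}$ in your localization is superfluous here, since neither $\Ubtilde_{i}$ nor $\Ub_{i}$ involves the covariates; only the $\Ub_{i}\in[\delta_{n},1-\delta_{n}]^{d}$ part of the localization is relevant. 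Neither point affects the validity of the argument.
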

\begin{proof}
 With the help of~\eqref{eq: asympt equiv score statistics} it is sufficient to show that 
\begin{equation*} 
 \frac{1}{\sqrt{n}} \suman \varphi(\Ubtilde_{i})  
  = \frac{1}{\sqrt{n}} \suman \widetilde{\varphi}(\Ub_{i}) + o_{P}(1). 
\end{equation*}
But this can be proved component-wise by mimicking 
the proof of Lemma~2 of \cite{ogvp_testing_rao_2017}, where 
the situation with $d=2$ but a more general $\varphi$ depending possibly 
also on $\Xb_{i}$ is considered. 
\end{proof}

\subsection{Proofs of Theorems~\ref{thm equiv of MPL estim} and~\ref{thm equiv of MPL estim for nice copulas}} 
\label{subsec: proof of the main theorem}

\begin{proof}[Proof of Theorem~\ref{thm equiv of MPL estim}]
With the help of Lemmas~\ref{lemma: convergence in probability of functions of ranks} 
and~\ref{lemma: as normality of functions of ranks} the proof can closely follow the 
proof of Lemma~3 in \cite{ogvp_testing_rao_2017}. 
In order to do that define  
\begin{equation} \label{eq: Wn}
\boldsymbol{W}_n(\ab) =  \frac{1}{n}\suman
  \skorb\big(\Ubhat_{i}; \ab\big) 
\qquad \text{and} \qquad 
\boldsymbol{W}(\ab) =  \Es \skorb\big(\Ub; \ab\big).  
\end{equation}
In what follows we show that assumptions of Theorem~A.10.2 of~\citet{BKRW1993} 
are satisfied for $\boldsymbol{W}_n$ and $\boldsymbol{W}$ given by 
\eqref{eq: Wn}.

It follows  from the standard maximum likelihood theory 
that Assumption~(GM0) is satisfied thanks to Assumption~\ref{assump:identifiability}. 
Moreover, Assumptions~\ref{assump:differentiability} and~\ref{assump:invertibility} imply  Assumption~(GM3). Assumption~(GM2) is also satisfied as
thanks to assumption~\ref{assump:snice}  
one can for each $k \in \{1,\dotsc,p\}$ apply Corollary~\ref{corollary: as normality of functions of ranks} 
to $\varphi(\ub) = \psi_{k}(\ub; \alfab)$ 
and get 
\[
\frac{1}{n}\suman \skorb
\big(\Ubhat_{i}; \alfab\big)
= \frac{1}{n} \suman \widetilde{\boldsymbol{\psi}}(\Ub_{i}) + o_{P}(n^{-1/2}),
\]
where $\widetilde{\boldsymbol{\psi}}(\ub)$ was introduced in 
Corollary~\ref{cor as normality of mpl}.

Thus, it remains to check Assumption~(U) from Theorem~A.10.2. Therefore for each $\eps > 0$ 
and for each $k, \ell \in \{1,\dotsc,p\}$, it is sufficient to find a~neighborhood 
$\mathcal{U}_{\eps} =\{\ab\in \mathcal{U}:\,\|\ab-\alfab\|<\eps\}$ such that
\[
 \sup_{\ab \in \mathcal{U}_{\eps}} \left|\frac{1}{n} \suman \frac{\partial \psi_{k}(\Ubhat_{i};\ab)}{\partial a_\ell}\,  - I^{(j, \ell)}(\ab)\right| \leq \epsilon + o_{P}(1),
\]
where $I^{(j, \ell)}(\ab)$ stands for the $(j, \ell)$~element of $I(\ab)$.

For simplicity of notation, let us put $g_{k, \ell}(\ub; \ab) =  \partial \psi_{k}(\ub;\ab)/\partial a_\ell$. Assumption~\ref{assump:differentiability} allows to adapt Lemma~\ref{lemma: convergence in probability of functions of ranks}, which gives
$$
 \frac{1}{n} \suman g_{k,\ell}(\Ubhat_{i}; \alfab) - I^{(k, \ell)}(\alfab)
 = o_{P}(1).
$$
Hence, it remains to show
\begin{equation} \label{eq: Dn}
 D_{n} = \sup_{\ab \in \mathcal{U}_{\eps}} \left|\frac{1}{n} \suman g_{k, \ell}(\Ubhat_{i};\ab) - \frac{1}{n} \suman g_{k, \ell}(\Ubhat_{i};\alfab) \right| \leq \epsilon + o_{P}(1).
\end{equation}
For a~given $\delta\in(0,1/4)$ (that will be specified later on), let us introduce the sets $\mathbf{I}_{\delta}$ and $\mathrm{J}_{\delta}$ as in
\eqref{eq: Idelta} and~\eqref{eq: Jdelta}.
Then the left-hand side of~\eqref{eq: Dn} can be bounded by
\begin{equation}
\label{eq: bound on Dn 1}
 D_{n} \leq \sup_{\ab \in \mathcal{U}_{\eps}} 
  \left|\frac{1}{n} \sum_{i \in  \mathrm{J}_{\delta} \cap \mathrm{J}_{n}^{X}} g_{k, \ell}(\Ubhat_{i};\ab) 
 - \frac{1}{n} \sum_{i \in  \mathrm{J}_{\delta}\cap \mathrm{J}_{n}^{X}} g_{k, \ell}(\Ubhat_{i}; \alfab)\right| 
+ \frac{2}{n} \sum_{i \not \in  \mathrm{J}_{\delta}\cap \mathrm{J}_{n}^{X}} h(\Ubhat_{i}), 
\end{equation}
where $\mathrm{J}_{n}^{X}$ was introduce in~\eqref{eq: JdeltanX and KdeltanX} and $h$ in Assumption \ref{assump:differentiability}.   
Now with probability going to one for each sufficiently large~$n$,
if $\Ub_{i} \in \mathbf{I}_{\delta}$,
then $\Ubhat_{i} \in \mathbf{I}_{\delta/2}$.
Thus for each $\delta\in(0,1/4)$ the term on the right-hand side of \eqref{eq: bound on Dn 1} can be made arbitrarily small (Assumption~\ref{assump:differentiability})
up to $o_{P}(1)$ term by considering a~sufficiently small neighbourhood~$\mathcal{U}_{\eps}$.

Finally, analogously as in the proof of Lemma~\ref{lemma: convergence in probability of functions of ranks}, one can show that
\begin{equation*}
 \frac{1}{n} \sum_{i \not \in  \mathrm{J}_{\delta}\cap \mathrm{J}_{n}^{X}} h(\Ubhat_{i})
  \leq r(\delta), 
\end{equation*}
where $r(\delta) \to 0$ as $\delta \to 0_{+}$.  

Thus we have verified the assumptions of Theorem~A.10.2 of~\citet{BKRW1993} 
which yields that there exists a consistent root 
(say $\widehat{\alfab}_{n}$)
of the estimating equation~\eqref{eq: MPL based on residuals} which has 
the following  asymptotic representation 
\[
  \sqrt{n}\,\big(\widehat{\alfab}_{n} - \alfab \big) = \{ I(\alfab)\}^{-1}
 \frac{1}{\sqrt{n}}\suman
  \widetilde{\boldsymbol{\psi}}\big(\Ub_{i})
   + o_{P}(1),   
\]
where the elements of the vector function $\widetilde{\boldsymbol{\psi}}$  are given in 
\eqref{eq: elements of psi}. 
Note that completely analogously one can show that there exists 
a consistent root (say $\widetilde{\alfab}_{n}$)
of the estimating equation~\eqref{eq: MPL based on errors} 
which has the  same asymptotic representation. This finally 
implies the statement of the theorem. 
\end{proof}

\begin{proof}[Proof of Theorem~\ref{thm equiv of MPL estim for nice copulas}] 
The proof is completely analogous to the proof of Theorem~\ref{thm equiv of MPL estim for nice copulas}. The only difference is that one uses Lemma~\ref{lemma: as normality of functions of ranks second} instead of Lemma~\ref{lemma: as normality of functions of ranks}. In fact the proof is 
even simpler as  thanks to assumption~\ref{assump:very nice} one can take a finite 
constant instead of the function~$h$. 
\end{proof}

\renewcommand{\theequation}{B\arabic{equation}}
\setcounter{equation}{0}

\section{\texorpdfstring{Some results on $\hatF_{j\hateps}$ and $\Uhat_{ji}$}{Some results on ecdf of residuals and estimated pseudo-observations}}

In what follows let $x_{+} = \max\{x,0\}$. 

\begin{lemma} \label{lemma about hatFjhateps}
Suppose that assumptions $(\mathbf{F}_{j\eps})$ and $\boldsymbol{(ms)}$ 
hold. Then for $\delta_n = n^{-1/\lambda}$ where 
$\lambda > 2(1 - \beta + \tfrac{1}{r-1})$   
it holds uniformly in $u \in [\delta_n/2,1-\delta_n/2]$ 
\begin{align}
 \notag 
 \hatF_{j\hateps}\big(F_{j\eps}^{-1}(u)\big) 
 &= \hatF_{j\eps}\big(F_{j\eps}^{-1}(u)\big)
  + f_{j\eps}(F_{j\eps}^{-1}(u))\,\E_{\Xb}\Big[\tfrac{m'_{j}(\Xb;\thetab_{j})}{s_{j}(\Xb;\thetab_{j})}
+ F_{j\eps}^{-1}(u)\,\tfrac{s'_{j}(\Xb;\thetab_{j})}{s_{j}(\Xb;\thetab_{j})}\Big]\tr
 (\widehat{\thetab}_{j}-\thetab_{j})
\\
\label{eq: approximation of hatFjhateps}
  & \qquad  + u^{(\beta - \gamma)_{+}}(1-u)^{(\beta - \gamma)_{+}} o_{P}\big(\tfrac{1}{\sqrt{n}}\big)  
\end{align}
for each $\gamma > 0$ and $j \in \{1,\dotsc,d\}$. 
\end{lemma}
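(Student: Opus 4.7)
The plan is to recast $\ind\{\hateps_{ji}\leq y\}$ as an indicator on $\eps_{ji}$ with a small data-dependent shift, Taylor expand the resulting smoothed piece, and show that the centered empirical-process remainder is $u^{(\beta-\gamma)_{+}}(1-u)^{(\beta-\gamma)_{+}}\,o_{P}(n^{-1/2})$ uniformly in $u\in[\delta_{n}/2,1-\delta_{n}/2]$. Solving $\hateps_{ji}\leq y$ for $T_{j}(Y_{ji})$ and substituting into the defining equation of $\eps_{ji}$ yields $\ind\{\hateps_{ji}\leq y\}=\ind\{\eps_{ji}\leq y+\Delta_{ji}(y)\}$, where
\begin{equation*}
\Delta_{ji}(y) = \tfrac{m_{j}(\Xb_{i};\hatthetab_{j})-m_{j}(\Xb_{i};\thetab_{j})}{s_{j}(\Xb_{i};\thetab_{j})} + y\,\tfrac{s_{j}(\Xb_{i};\hatthetab_{j})-s_{j}(\Xb_{i};\thetab_{j})}{s_{j}(\Xb_{i};\thetab_{j})};
\end{equation*}
Assumption~$\boldsymbol{(ms)}$ and $\sqrt{n}$-consistency of $\hatthetab_{j}$ then give, via the mean value theorem, the uniform bound $|\Delta_{ji}(y)|\leq O_{P}(n^{-1/2})(1+|y|)M_{j}(\Xb_{i})$.

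I would then decompose $\hatF_{j\hateps}(y)-\hatF_{j\eps}(y)=A_{n}(y)+B_{n}(y)$ with smoothed piece $B_{n}(y)=\tfrac{1}{n}\suman[F_{j\eps}(y+\Delta_{ji}(y))-F_{j\eps}(y)]$ and centered empirical-process remainder $A_{n}(y)$. Taylor expanding $F_{j\eps}$ about $y=F_{j\eps}^{-1}(u)$ using continuity of $f_{j\eps}$, then mean-value expanding $\Delta_{ji}$ about $\thetab_{j}$ and applying the law of large numbers to the averages $\tfrac{1}{n}\sum_{i} m'_{j}(\Xb_{i};\thetab_{j})/s_{j}(\Xb_{i};\thetab_{j})$ and $\tfrac{1}{n}\sum_{i} F_{j\eps}^{-1}(u)\,s'_{j}(\Xb_{i};\thetab_{j})/s_{j}(\Xb_{i};\thetab_{j})$ produces precisely the leading linear correction displayed in~\eqref{eq: approximation of hatFjhateps}. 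The second-order Taylor remainder and the LLN fluctuations are absorbed into the stated $u^{(\beta-\gamma)_{+}}(1-u)^{(\beta-\gamma)_{+}}o_{P}(n^{-1/2})$ error by invoking the pointwise bound $f_{j\eps}(F_{j\eps}^{-1}(u))(1+|F_{j\eps}^{-1}(u)|)\leq C\,u^{\beta}(1-u)^{\beta}$ from Assumption~$(\mathbf{F}_{j\eps})$, together with $\E\,[M_{j}(\Xb)]^{r}<\infty$; the small $\gamma>0$ is exactly what is needed to trade the $\beta$-exponent against the $r$-th moment through H\"older's inequality.

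The main obstacle is the uniform control of $A_{n}(F_{j\eps}^{-1}(u))$. My plan is to restrict the sum defining $A_{n}$ to the indices $i\in\mathrm{J}_{jn}^{X}$ from~\eqref{eq: Jjdeltan}, on which $|\Delta_{ji}(F_{j\eps}^{-1}(u))|\leq r_{n}(u):=C\,(1+|F_{j\eps}^{-1}(u)|)\,a_{n}/\sqrt{n}$; the contribution of the complement is negligible by Lemma~\ref{lemma bound on prob of large Mj}. On the reduced sum one applies a Shorack--Wellner type modulus-of-continuity inequality for the empirical process $\sqrt{n}(\hatF_{j\eps}-F_{j\eps})$ over $y$-intervals of length $r_{n}(u)$, producing a uniform bound of order $\sqrt{f_{j\eps}(F_{j\eps}^{-1}(u))\,r_{n}(u)\,\log n}\,/\sqrt{n}$, which combined with $f_{j\eps}(F_{j\eps}^{-1}(u))(1+|F_{j\eps}^{-1}(u)|)\leq C\,u^{\beta}(1-u)^{\beta}$ yields the required weight $u^{(\beta-\gamma)_{+}}(1-u)^{(\beta-\gamma)_{+}}n^{-1/2}$. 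The arithmetic condition $\lambda>2\bigl(1-\beta+\tfrac{1}{r-1}\bigr)$ on $\delta_{n}=n^{-1/\lambda}$, together with the matching admissible $\lambda_{x}$ controlling $a_{n}$, is chosen precisely so that (a) the discarded boundary $u\notin[\delta_{n}/2,1-\delta_{n}/2]$ is already negligible at the stated rate, and (b) $a_{n}$ grows slowly enough that the empirical-process remainder is $o_{P}(1)$ in units of $u^{(\beta-\gamma)_{+}}(1-u)^{(\beta-\gamma)_{+}}n^{-1/2}$. Combining the leading expansion of $B_{n}$ with this remainder bound on $A_{n}$ delivers~\eqref{eq: approximation of hatFjhateps}.
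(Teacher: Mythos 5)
Your overall architecture matches the paper's: rewrite $\ind\{\hateps_{ji}\leq y\}$ as $\ind\{\eps_{ji}\leq y+\Delta_{ji}(y)\}$, truncate at $M_{j}(\Xb_{i})\leq a_{n}$, split off the smoothed drift $B_{n}$ and Taylor-expand it to obtain the leading linear correction, and control a centered remainder $A_{n}$. The drift part is essentially right, with one caveat: replacing $f_{j\eps}$ evaluated at the mean-value point by $f_{j\eps}\big(F_{j\eps}^{-1}(u)\big)$ \emph{uniformly} down to $u=\delta_{n}/2$ and relative to the weight $u^{(\beta-\gamma)_{+}}(1-u)^{(\beta-\gamma)_{+}}$ is not a consequence of mere continuity of $f_{j\eps}$; it requires the doubling and monotonicity conditions in $(\mathbf{F}_{j\eps})$, which is exactly what Lemmas~\ref{lemma about yjx} and~\ref{lemma about cont of fjeps} supply.

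The genuine gap is in your treatment of $A_{n}$. The shifts $\Delta_{ji}(y)$ are heterogeneous across $i$ --- they depend on $\Xb_{i}$ through $m_{j}(\Xb_{i};\cdot)$ and $s_{j}(\Xb_{i};\cdot)$ --- so $A_{n}(y)$ is \emph{not} an increment of the univariate empirical process $\sqrt{n}\,(\hatF_{j\eps}-F_{j\eps})$, and a Shorack--Wellner modulus-of-continuity bound for that process does not apply to it. The obvious repair, sandwiching every $\Delta_{ji}(y)$ by $\pm r_{n}(u)$ with $r_{n}(u)=C\,(1+|F_{j\eps}^{-1}(u)|)\,a_{n}/\sqrt{n}$ so as to reduce to increments of $\hatF_{j\eps}$, destroys the per-observation centering: it leaves behind the deterministic gap $F_{j\eps}(y+r_{n}(u))-F_{j\eps}(y-r_{n}(u))\asymp f_{j\eps}\big(F_{j\eps}^{-1}(u)\big)\big(1+|F_{j\eps}^{-1}(u)|\big)\,a_{n}/\sqrt{n}\leq C\,u^{\beta}(1-u)^{\beta}\,a_{n}/\sqrt{n}$, which is \emph{not} $u^{(\beta-\gamma)_{+}}(1-u)^{(\beta-\gamma)_{+}}\,o(n^{-1/2})$ for small $\gamma$: at $u=1/2$ it is of order $a_{n}n^{-1/2}\gg n^{-1/2}$ since $a_{n}\to\infty$. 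What is actually needed --- and what constitutes the bulk of the paper's proof --- is asymptotic equicontinuity of the \emph{weighted} empirical process $Z_{ni}(\tb,u)$ indexed jointly by $(\tb,u)\in T_{1}\times(0,\tfrac{1}{2}]$ over the sample space of $(\Xb_{i},U_{ji})$, with the weight $1/w(u^{(n)})$ built into the process and the bespoke semimetric $\rho$ of \eqref{eq: rho}; the paper verifies the three conditions of Theorem~2.11.11 of van der Vaart and Wellner for this class (Gaussian domination of $\rho$, the Lindeberg-type condition, the $\rho$-Lipschitz bound on second moments, and the weak-second-moment bound on oscillations over small balls). Your plan substitutes an inapplicable univariate tool for this step, so the key uniform bound on $A_{n}$ remains unproven.
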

\begin{proof}
We will show the statement for $u \in [\frac{\delta_n}{2},\tfrac{1}{2}]$. The proof would 
be completely analogous for $u \in [\tfrac{1}{2},1-\frac{\delta_n}{2}]$. 

Note that 
\[
 \hatF_{j\hateps}\big(F_{j\eps}^{-1}(u)\big)  = 
  \frac{1}{n} \sum_{i=1}^{n} \ind\Big\{\eps_{ji} \leq 
  \tfrac{m_{j}(\Xb_{i}; \hatthetab_j) - m_{j}(\Xb_{i}; \thetab_j)}{s_{j}(\Xb_{i}, \thetab_{j})}
 + \tfrac{F_{j\eps}^{-1}(u)s_{j}(\Xb_{i}; \hatthetab_j)}{s_{j}(\Xb_{i}; \thetab_j)}\Big\}. 
\]
In what follows we need to take care of the fact that the majorant $M_{j}(\xb)$ from 
assumption~$\boldsymbol{(ms)}$ can be unbounded. 
Let $a_{n} = n^{1/(\lambda_{x}r)}$, where $\lambda_{x}$ will be specified 
later. Then similarly as in the proof of Lemma~\ref{lemma bound on prob of large Mj} one can use Markov's inequality to bound 
\begin{align}
\notag 
 \Big|&\hatF_{j\hateps}\big(F_{j\eps}^{-1}(u)\big) - 
\frac{1}{n} \sum_{i=1}^{n} \ind\Big\{\eps_{ji} \leq 
  \tfrac{m_{j}(\Xb_{i}; \hatthetab_j) - m_{j}(\Xb_{i}; \thetab_j)}{s_{j}(\Xb_{i}, \thetab_{j})}
 + \tfrac{F_{j\eps}^{-1}(u)s_{j}(\Xb_{i}; \hatthetab_j)}{s_{j}(\Xb_{i}; \thetab_j)}, M_{j}(\Xb_i) \leq a_n\Big\} \Big|
\\
\notag 
& \qquad  \leq 
\frac{1}{n} \sum_{i=1}^{n} \ind\big\{M_{j}(\Xb_i) > a_n\big\} 
 \leq  \frac{1}{n} \sum_{i=1}^{n} \tfrac{M_{j}^{r}(\Xb_i)}{a_{n}^{r}} \ind\big\{M_{j}(\Xb_i) > a_n\big\} 
= o_{P}\big(\tfrac{1}{n^{1/\lambda_{x}} }\big).   
\end{align}
Note that thanks to the assumption $\lambda > 2(1 - \beta + \tfrac{1}{r-1})$ it is straightforward to verify that 
$
 \tfrac{1}{2} + \tfrac{\beta}{\lambda} 
 < r\big(\tfrac{1}{2} - \tfrac{1-\beta}{\lambda}\big).  
$
In the following we will take $\lambda_{x}$ such that 
\begin{equation} \label{eq: lambdax}
 \tfrac{1}{2} + \tfrac{\beta}{\lambda} 
 < \tfrac{1}{\lambda_{x}} 
 < r\big(\tfrac{1}{2} - \tfrac{1-\beta}{\lambda}\big).  
\end{equation}
Now with the help of \eqref{eq: lambdax} 
one can conclude that 
\begin{align}
 \notag 
 \hatF_{j\hateps}\big(F_{j\eps}^{-1}(u)\big) &= 
\frac{1}{n} \sum_{i=1}^{n} \ind\Big\{U_{ji} \leq 
  F_{j\eps}\big(\tfrac{m_{j}(\Xb_{i}; \hatthetab_j) - m_{j}(\Xb_{i}; \thetab_j)}{s_{j}(\Xb_{i}; \thetab_{j})}
 + \tfrac{F_{j\eps}^{-1}(u)s_{j}(\Xb_{i}; \hatthetab_j)}{s_{j}(\Xb_{i}; \thetab_j)}
\big), M_{j}(\Xb_i) \leq a_n\Big\} 
\\
\label{eq: approx of hatFjhateps} 
& \qquad + u^{(\beta - \gamma)_{+}}(1-u)^{(\beta - \gamma)_{+}}o_{P}(n^{-1/2}),      
\end{align}
for $u \in [\delta_n/2, 1/2]$.

Now for simplicity of notation introduce 
\begin{equation} \label{eq: yjx}
 y_{j\xb}(\tb,u) = \tfrac{m_{j}(\xb;\tb) - m_{j}(\xb;\thetab_j)}{s_{j}(\xb;\thetab_j)}   + F_{j\eps}^{-1}(u) \tfrac{s_{j}(\xb;\tb)}{s_{j}(\xb;\thetab_j)}.  
\end{equation}
Further for $u \in (0,1]$ and $\tb \in \RR^{p_{j}}$ put 
\begin{equation*} 
 w(u) = \min\{u, 1-u\}^{(\beta - \gamma)_{+}}, 
 \quad \text{and} \quad \tbn = \thetab_{j} + \tb/n^{1/2-\eta}, 
\end{equation*}
where $\eta > 0$ is sufficiently small. Note that the 
function $w$ is increasing on $(0,\frac{1}{2})$ and 
decreasing on $(\frac{1}{2},1)$ for $\beta - \gamma > 0$. 
Finally let
\[
 u^{(n)} = \max\{u,\delta_n/2\}
\]
and for $i \in \{1,\dotsc,n\}$ introduce 
the processes 
\begin{equation*}
  Z_{ni}(\tb,u) = \tfrac{1}{w(u^{(n)})\sqrt{n}}
 \ind\big\{U_{ji} \leq F_{j\eps}\big(y_{j\Xb_{i}}(\tbn,u^{(n)})\big), |M_{j}(\Xb_{i})| \leq a_n \big\}
\end{equation*}
that are indexed by the set $\mathcal{F} = T_{1} \times (0,1/2]$, 
where $T_{1} = \big\{\tb \in \RR^{p_j}: \|\tb\| \leq 1\big\}$. 

%
Note that assumption $\boldsymbol{(ms)}$ guarantees that 
$n^{1/2-\eta}(\widehat{\thetab}_{j} - \thetab_{j}) \inPr 0$ 
for each $\eta \in (0,\frac{1}{2})$, which further implies that  
$\Pr(\|n^{1/2-\eta}(\widehat{\thetab}_{j} - \thetab_{j})\| \leq  1) \ntoinfty 1$. 
Put 
\[
 \widehat{\boldsymbol{\vartheta}}_{n} = n^{1/2-\eta}(\widehat{\thetab}_{j} - \thetab_{j}).
\]
Then with the help of \eqref{eq: approx of hatFjhateps} one can (with probability 
going to one) write that for $u \in [\delta_n/2,1/2]$
\begin{equation} \label{eq: hatFhateps via empir process}
 \hatF_{j\hateps}\big(F_{j\eps}^{-1}(u)\big) = \frac{w(u)}{\sqrt{n}}\suman Z_{ni}(\widehat{\boldsymbol{\vartheta}}_{n},u) 
+ w(u)\,o_{P}(n^{-1/2}).   
\end{equation}
%
%

Now equip the space $\mathcal{F}$ with the semimetric $\rho$ given by 
\begin{equation} \label{eq: rho}
 \rho\big((\tb_1,u_1), (\tb_2,u_2)\big) = K\,\sqrt{\|\tb_1 - \tb_2\| 
 + \tfrac{u_{2} - u_{1}}{w^2(u_2)} + \big(\tfrac{1}{w(u_1)}-\tfrac{1}{w(u_2)}\big)^2 
  u_1}, \quad \text{for} \quad u_1 \leq u_2,  
\end{equation}
where $K$ is a finite constant that will be specified afterwards. 

Later we show that the assumptions of Theorem 2.11.11 of \cite{vaart_wellner} are satisfied 
for the empirical process indexed by $\mathcal{F}$, which implies that the 
process is asymptotically tight. Further as 
$\sup_{u \in (0,\frac{1}{2}]} 
 \rho\big((\widehat{\boldsymbol{\vartheta}}_{n},u), (\boldsymbol{0},u)\big) = o_{P}(1)$, 
one gets that uniformly in $u \in (0,1/2]$ 
%
\begin{equation} \label{eq: using asympt tightness}
  \suman Z_{ni}(\widehat{\boldsymbol{\vartheta}}_{n},u) - \suman Z_{ni}(\boldsymbol{0},u) 
 - \suman \E_{U,\Xb}\,\big[Z_{ni}(\widehat{\boldsymbol{\vartheta}}_{n},u)  - Z_{ni}(\boldsymbol{0},u)\big]  
 = o_{P}(1),   
\end{equation}
where $\E_{U,\Xb}$ stands for the expectation with respect 
to $U_{ji}$'s and $\Xb_{i}$'s (while 
considering $\widehat{\boldsymbol{\vartheta}}_{n}$ being fixed).  

In what follows we concentrate on $u \in [\delta_n/2,1/2]$. If not stated 
otherwise all the 
following results hold uniformly for $u$ from this interval. 

Note that similarly as in~\eqref{eq: hatFhateps via empir process} one can argue that 
%
%
\begin{equation*} 
 \hatF_{j\eps}\big(F_{j\eps}^{-1}(u)\big) = \frac{w(u)}{\sqrt{n}}\suman Z_{ni}(\boldsymbol{0},u) 
+ w(u)\,o_{P}\big(\tfrac{1}{\sqrt{n}}\big).   
\end{equation*}
This together with \eqref{eq: approx of hatFjhateps} and \eqref{eq: using asympt tightness} 
implies 
\begin{equation} \label{eq: Pn of f}
\hatF_{j\hateps}\big(F_{j\eps}^{-1}(u)\big) 
=  \hatF_{j\eps}\big(F_{j\eps}^{-1}(u)\big)
 + w(u)\,\sqrt{n}\,\E_{U,\Xb}\big[Z_{n1}(\widehat{\boldsymbol{\vartheta}}_{n},u)  - Z_{n1}(\boldsymbol{0},u)\big]   
%
 + w(u)\,o_{P}\big(\tfrac{1}{\sqrt{n}}\big).   
\end{equation}
Thus to finish the proof it remains to deal with the second term on the 
right-hand side of \eqref{eq: Pn of f}. As $\sqrt{n}\,(\hatthetab_j - \thetab_j) = O_{P}(1)$
one can use the mean value theorem which guarantees  
that (with probability going to one)  there exists $\tb_{*} \in T_{1}$ such that  
\begin{align}
\notag 
w(u)&\sqrt{n}\,\E_{U,\Xb}\big[Z_{n1}(\widehat{\boldsymbol{\vartheta}}_{n},u)  - Z_{n1}(\boldsymbol{0},u)\big]   
\\
\notag 
&=  \EX\Big[ \big[ F_{j\eps}\big(y_{j\Xb}(\hatthetab_j,u)\big) - u\big]\,\ind\big\{M_{j}(\Xb) 
 \leq a_{n} \big\}\Big]
\\
%
& = \E_{\Xb}\Big[ f_{j\eps}(y_{j\Xb}(\tbn_{*},u)) \Big(\tfrac{m'_{j}(\Xb;\tbn_{*})}{s_{j}(\Xb;\thetab_{j})} 
  + F_{j\eps}^{-1}(u) \tfrac{s'_{j}(\Xb;\tbn_{*})}{s_{j}(\Xb;\thetab_{j})}\Big)^{\!\!\mathsf{T}}
 \ind\big\{M_{j}(\Xb) 
 \leq a_{n} \big\} \Big] 
\big(\hatthetab_j - \thetab_{j}\big).  
%
\label{eq: P of f}
\end{align} 
Note that for $\xb$ such that $M_{j}(\xb) \leq a_{n}$ one 
has 
\begin{equation} \label{eq: mj at tb minus mj at theta}
 \big|\tfrac{m_{j}(\xb;\tbn) - m_{j}(\xb;\thetab_j)}{s_{j}(\xb;\thetab_j)}\big| 
 \leq M_{j}(\xb)\|\tbn - \thetab_{j}\| \leq a_{n}\, n^{-1/2+\eta} 
  = n^{1/(\lambda_{x} r) - 1/2+\eta} 
\end{equation}
and also 
\begin{equation} \label{eq: sj at tb dividided by sj at theta}
 \big|\tfrac{s_{j}(\xb;\tbn)}{s_{j}(\xb;\thetab_j)} - 1\big| 
\leq  M_{j}(\xb)\|\tbn - \thetab_{j}\| \leq a_{n}\, n^{-1/2+\eta} 
= n^{1/(\lambda_{x} r) - 1/2+\eta}, 
\end{equation}
where both inequalities hold uniformly in $\tb \in T_{1}$ 
and $\xb \in \{\tilde{\xb}: M_{j}(\tilde{\xb})\leq a_{n}\}$. Thus with the help 
of Lemma~\ref{lemma about cont of fjeps} 
\begin{equation} \label{eq: fjeps et yjx divided by u to beta}
 \sup_{\tb_{*} \in T_{1}} 
 \sup_{\xb \in \{\tilde{\xb}: M_{j}(\tilde{\xb})\leq a_{n}\}} 
\sup_{u \in [\delta_n/2,1/2]}  
\frac{\big|f_{j\eps}(y_{j\xb}(\tbn_{*},u)) - f_{j\eps}(F_{j\eps}^{-1}(u))\big|}
 {u^{(\beta - \gamma)_{+}}(1-u)^{(\beta - \gamma)_{+}}} = o_{P}(1) 
\end{equation}
and also 
\begin{equation} \label{eq: fjeps et yjx times Fjinv divided by u to beta}
 \sup_{\tb_{*} \in T_{1}} 
 \sup_{\xb \in \{\tilde{\xb}: M_{j}(\tilde{\xb})\leq a_{n}\}} 
\sup_{u \in [\delta_n/2,1/2]}  
\frac{\big|f_{j\eps}(y_{j\xb}(\tbn_{*},u))F_{j\eps}^{-1}(u) - f_{j\eps}(F_{j\eps}^{-1}(u))F_{j\eps}^{-1}(u)\big|}
 {u^{(\beta - \gamma)_{+}}(1-u)^{(\beta - \gamma)_{+}}} = o_{P}(1).
\end{equation}
Now combining the above findings with assumption $\boldsymbol{(ms)}$ yields that 
\eqref{eq: P of f} can be simplified to 
\begin{align*} 
w(u)\sqrt{n}&\,\E_{U,\Xb}\big[Z_{n1}(\widehat{\boldsymbol{\vartheta}}_{n},u)  - Z_{n1}(\boldsymbol{0},u)\big]    
\\
& =  f_{j\eps}\big(F_{j\eps}^{-1}(u)\big) 
\E_{\Xb}\,\Big[\tfrac{m'_{j}(\Xb;\thetab_{j})}{s_{j}(\Xb;\thetab_{j})} 
  + F_{j\eps}^{-1}(u) \tfrac{s'_{j}(\Xb;\thetab_{j})}{s_{j}(\Xb;\thetab_{j})} 
\Big]\tr \big(\hatthetab_j - \thetab_{j}\big)     
%
+ w(u)\,o_{P}(n^{-1/2}),  
\end{align*}
which together with \eqref{eq: Pn of f} implies \eqref{eq: approximation of hatFjhateps}. 

\bigskip 

\noindent \textbf{Verifying assumptions of Theorem 2.11.11 of \cite{vaart_wellner}}

\smallskip  
First of all we need to show that the semimetric~$\rho$ defined in \eqref{eq: rho} is 
\textit{Gaussian-dominated}. To prove that it is sufficient to show that 
\citep[see p.~212 of][]{vaart_wellner}
\begin{equation} \label{eq: gauss dominancy}
 \int_{0}^{\infty} \sqrt{\log N(\epsilon, \mathcal{F}, \rho)}\,d\epsilon < \infty, 
\end{equation}
where $N(\epsilon, \mathcal{F}, \rho)$ is the covering number of~$\mathcal{F}$. 

It is known \citep[see Example~2.11.15 of][]{vaart_wellner} 
that \eqref{eq: gauss dominancy} holds true if 
$\mathcal{F}$ is replaced with $(0,1/2]$ and $\rho$ with 
\begin{equation} \label{eq: rho0}
 \rho_{0}(u_1, u_2) = 
 \sqrt{\tfrac{u_{2} - u_{1}}{w^2(u_2)} + \big(\tfrac{1}{w(u_1)}-\tfrac{1}{w(u_2)}\big)^2 
  u_1}, \quad \text{for} \quad u_1 \leq u_2,  
\end{equation}
as $\rho_{0}$ is Gaussian. But from the definition 
of~$\rho$ in \eqref{eq: rho} it follows that one can bound 
\begin{align*}
 N\big(\epsilon, \mathcal{F}, \rho\big) &\leq N\big(\epsilon^2/(4K^2), T_{1}, \|\cdot\|\big)\, 
 N\big(\epsilon/(2K), (0,1/2], \rho_{0}\big) 
\\
&= O(\epsilon^{-2p_j})\, N\big(\epsilon/(2K), (0,1/2], \rho_{0}\big),  
\end{align*}
thus also $(\mathcal{F},\rho)$ satisfies \eqref{eq: gauss dominancy}.  

\medskip 

Next we need to check the three assumptions of  Theorem 2.11.11 of \cite{vaart_wellner}. 
As in our situations the processes $Z_{n1},\dotsc,Z_{nn}$ are identically distributed, the 
assumptions can be rewritten as follows. 
 
\noindent \textit{(I) For each $\zeta > 0$
\begin{equation} \label{eq: FL type assump}
 n\,\E \Big[\|Z_{n1}\|_{\mathcal{F}}\, \ind\big\{ \|Z_{n1}\|_{\mathcal{F}} > \zeta\big\} \Big] 
\ntoinfty 0.  
\end{equation}
}

\noindent \textit{(II) For each $(\tb_1,u_1), (\tb_2,u_2) \in \mathcal{F}$  
\begin{equation} \label{eq: L2 continuity in rho assump}
 n\,\E \big(Z_{n1}(\tb_2,u_2) -  Z_{n1}(\tb_1,u_1)\big)^2    
 \leq \rho^{2}\big((\tb_2,u_2), (\tb_1,u_1)\big). 
\end{equation}
}

\noindent \textit{(III) For every $\rho$-ball $B(\epsilon) \subset \mathcal{F}$ of radius less than $\epsilon$ 
\begin{equation} \label{eq: weak second moment}
 n\,\sup_{v > 0} v^{2}\, \Pr\bigg( \sup_{(\tb_1,u_1), (\tb_2,u_2) \in B(\epsilon)} 
 \big|Z_{n1}(\tb_2,u_2) -  Z_{n1}(\tb_1,u_1)\big|  
 >  v \bigg) \leq \epsilon^{2}. 
\end{equation}
}
%
 
 
Note that \textbf{the first assumption} \eqref{eq: FL type assump} is easy to check as 
\[
 \|Z_{n1}\|_{\mathcal{F}} \leq 
 \sup_{(\tb,u) \in \mathcal{F}} |Z_{n1}(\tb,u)| 
 \leq \sup_{u \in (0, \frac{1}{2}]} \tfrac{1}{\sqrt{n}\,w(u^{(n)})} 
 \leq \tfrac{1}{\sqrt{n}\,w(\delta_{n}/2)} \ntoinfty 0. 
\]
%

\medskip 

To verify \textbf{the second assumption} \eqref{eq: L2 continuity in rho assump} fix $\tb_1,\tb_2$ and $u_1,u_2$ (so that $u_1 \leq u_2$) and calculate 
\allowdisplaybreaks{ 
\begin{align}
\notag  \nobreak 
  n\, & \Es \big(Z_{n1}(\tb_2,u_2) -  Z_{n1}(\tb_1,u_1)\big)^2    
\\
\notag \nobreak 
 &=  \E\Big[\Big( \tfrac{\ind\{U_{j} \leq F_{j\eps}(y_{j\Xb}(\tbn_{2},u_{2}^{(n)}))\}}{w(u_2^{(n)})} 
- \tfrac{\ind\{U_{j} \leq F_{j\eps}(y_{j\Xb}(\tbn_{1},u_{1}^{(n)}))\}}{w(u_1^{(n)})} 
\Big)^{2} \ind\{M_{j}(\Xb) \leq a_n\}
 \Big]
\\
\notag 
 &\leq  2\,\E\Big[\Big( \tfrac{\ind\{U_{j} \leq F_{j\eps}(y_{j\Xb}(\tbn_{2},u_{2}^{(n)}))\}}{w(u_2^{(n)})} 
- \tfrac{\ind\{U_{j} \leq F_{j\eps}(y_{j\Xb}(\tbn_{1},u_{1}^{(n)}))\}}{w(u_{2}^{(n)})} 
\Big)^{2} \ind\{M_{j}(\Xb) \leq a_n\}
 \Big]
\\*
\notag 
&\quad  + 2\,\E\Big[\Big( \tfrac{\ind\{U_{j} \leq F_{j\eps}(y_{j\Xb}(\tbn_{1},u_{1}^{(n)}))\}}{w(u_{2}^{(n)})} 
- \tfrac{\ind\{U_{j} \leq F_{j\eps}(y_{j\Xb}(\tbn_{1},u_{1}^{(n)}))\}}{w(u_{1}^{(n)})} 
\Big)^{2} \ind\{M_{j}(\Xb) \leq a_n\}
 \Big]
\\
\label{eq: sum of second moments of Zni} 
& = \tfrac{2}{w^{2}(u_2^{(n)})}
 \,\E\big[\big|F_{j\eps}\big(y_{j\Xb}(\tbn_{2},u_{2}^{(n)})\big) 
 -  F_{j\eps}\big(y_{j\Xb}(\tbn_{1},u_{1}^{(n)})\big) \big| 
  \ind\{M_{j}(\Xb) \leq a_n\} \big]
\\*
\notag  
& \quad + 2\,\Big(\tfrac{1}{w(u_2^{(n)})} - \tfrac{1}{w(u_1^{(n)})}\Big)^{2} 
 \,\E\big[F_{j\eps}(y_{j\Xb}\big(\tbn_{1},u_{1}^{(n)})\big)  
  \ind\{M_{j}(\Xb) \leq a_n\} \big]. 
\end{align}
}

Now we will have a look at \textit{the first term} on the right-hand side 
of \eqref{eq: sum of second moments of Zni}.  
For a given $u \in [\delta_n/2,1/2]$ 
by the mean value theorem there exists $\tb_{*}$ between $\tb_{1}$  
and $\tb_{2}$ such that 
\begin{align}
\notag   
 \E&\big[\big|F_{j\eps}(y_{j\Xb}\big(\tbn_{2},u)\big) 
 -  F_{j\eps}(y_{j\Xb}\big(\tbn_{1},u)\big) \big| 
  \ind\{M_{j}(\Xb) \leq a_n\} \big]
\\
\label{eq: diff of Fjeps at different tb} 
& \leq \E\,\Big[ f_{j\eps}(y_{j\Xb}(\tb_{*}^{(n)},u_{})) 
\Big(M_{j}(\Xb)  
  + \big|F_{j\eps}^{-1}(u)
 \big|M_{j}(\Xb)\Big)\Big]
\|\tbn_{1} - \tbn_{2}\| 
\end{align}
Now with the help of \eqref{eq: yjx}, \eqref{eq: mj at tb minus mj at theta}, 
\eqref{eq: sj at tb dividided by sj at theta}  and 
Lemma~\ref{lemma about yjx} one can conclude that 
with probability going to one
\begin{equation} \label{eq: bound on yjx}
  \sup_{\tb \in T_{1}} 
 \sup_{\xb \in \{\tilde{\xb}: M_{j}(\tilde{\xb})\leq a_{n}\}} y_{j\xb}(\tbn,u) 
 \leq F_{j\eps}^{-1}(2\,u), 
\end{equation}
which together with \eqref{eq: diff of Fjeps at different tb} 
implies that 
\begin{align}
\notag 
\E&\big[\big|F_{j\eps}(y_{j\Xb}\big(\tbn_{2},u^{(n)})\big)  
 -  F_{j\eps}(y_{j\Xb}\big(\tbn_{1},u^{(n)})\big) \big| 
  \ind\{M_{j}(\Xb) \leq a_n\} \big] 
\\
\label{eq: diff of Fjeps at different tb final}   
& \leq 
O(n^{-1/2+\eta})\,\E M_{j}(\Xb)\, \|\tb_{1} - \tb_{2}\|\, 
 O\big((u^{(n)})^{\beta}\big)
 \leq O(n^{-1/2+\eta})\,\|\tb_{1} - \tb_{2}\|\, w(u^{(n)})
\end{align}
uniformly in $u$. 

Now fix $\tb$ and $\xb$.
Then by the mean value theorem there exists $\tilde{u}$  
between $u_1^{(n)}$ and $u_2^{(n)}$ such that 
\begin{multline} 
\label{eq: diff of Fjeps at different u work}
 \big|F_{j\eps}\big(y_{j\xb}(\tbn,u_1^{(n)})\big) 
 - F_{j\eps}\big(y_{j\xb}(\tbn,u_2^{(n)})\big) \big| 
\\
\leq f_{j\eps}(y_{j\xb}(\tbn,\tilde{u})) \tfrac{s_{j}(\xb;\tbn)}{s_{j}(\xb;\thetab_{j})}
\tfrac{1}{f_{j\eps}(F_{j\eps}^{-1}(\tilde{u}))}|u_{1}^{(n)} - u_{2}^{(n)}|, 
\end{multline}

which together with 
\[
 \Big|\frac{s_{j}(\xb;\tbn)}{s_{j}(\xb;\thetab_{j})} \Big|
 = \Big|1+\frac{s_{j}(\xb;\tbn)-s_{j}(\xb;\thetab_j)}{s_{j}(\xb;\thetab_{j})} \Big| 
 \leq 1 + M_{j}(\xb)\|\tbn - \thetab_j\| 
 \leq 1 + a_{n}\, n^{-1/2+\eta},  
\]
assumption $(\mathbf{F}_{j\eps})$ and \eqref{eq: diff of Fjeps at different u work} implies that 
\begin{equation} \label{eq: diff of Fjeps at different u}
 \big|F_{j\eps} \big(y_{j\xb}(\tbn,u_1^{(n)})\big) 
 - F_{j\eps}\big(y_{j\xb}(\tbn,u_2^{(n)})\big) \big| 
 \leq O(1)\,|u_1^{(n)} - u_{2}^{(n)}|  
\end{equation}
uniformly in $\tb$ and $\xb$. 

Now combining the inequalities 
\eqref{eq: bound on yjx}, \eqref{eq: diff of Fjeps at different tb final}  
and~\eqref{eq: diff of Fjeps at different u} implies that 
%
\begin{multline} \label{eq: bound on the first term in second moments of Zni}
\tfrac{1}{w^{2}(u_2^{(n)})}
 \,\E\big[\big|F_{j\eps}(y_{j\Xb}\big(\tbn_{2},u_{2}^{(n)})\big) 
 -  F_{j\eps}(y_{j\Xb}\big(\tbn_{1},u_{1}^{(n)})\big) \big| 
  \ind\{M_{j}(\Xb) \leq a_n\} \big]
\\
\leq 
\tfrac{ O(n^{-1/2+\eta}) \|\tb_{1} - \tb_{2}\|\,w(u_{2}^{(n)})}{w^{2}(u_2^{(n)})}
+ 
\tfrac{O(1)(u_{2}^{(n)} - u_{1}^{(n)})}{w^{2}(u_2^{(n)})}
= O(1)\Big(\|\tb_{1} - \tb_{2}\| + \tfrac{u_{2}^{(n)} - u_{1}^{(n)}}{w^{2}(u_2^{(n)})} \Big). 
\end{multline}

\smallskip 

Now turn our attention to \textit{the second term} on the right-hand side 
of \eqref{eq: sum of second moments of Zni}. Analogously as above one can bound 
\begin{align}
\notag 
\E&\,\big[F_{j\eps}(y_{j\Xb}\big(\tbn_{1},u_{1}^{(n)})\big)  
  \ind\{M_{j}(\Xb) \leq a_n\} \big]
\\ 
\notag 
& \leq \E\big[\big|F_{j\eps}(y_{j\Xb}\big(\tbn_{1},u_{1}^{(n)})\big)  
 - F_{j\eps}(y_{j\Xb}\big(\thetab_j,u_{1}^{(n)})\big) \big|
  \ind\{M_{j}(\Xb) \leq a_n\} \big]
\\
\notag 
 & \quad + \E\big[F_{j\eps}(y_{j\Xb}\big(\thetab_j,u_{1}^{(n)})\big)  
  \ind\{M_{j}(\Xb) \leq a_n\} \big] 
\\
\notag 
& \leq O(1) \|\tbn_{1}-\thetab_j\| + u_{1}^{(n)} = O(n^{-1/2+\eta})  + u_{1}^{(n)} 
 \leq 2\,u_{1}^{(n)}. 
\end{align}
Combining this with \eqref{eq: sum of second moments of Zni} and 
\eqref{eq: bound on the first term in second moments of Zni} 
one gets 
\begin{align*}
\notag  \nobreak 
  n\, & \Es \big(Z_{n1}(\tb_2,u_2) -  Z_{n1}(\tb_1,u_1)\big)^2    
\leq O(1)\Big[\|\tb_{1} - \tb_{2}\| + \tfrac{u_{2}^{(n)} - u_{1}^{(n)}}{w^{2}(u_2^{(n)})}  + 
\Big(\tfrac{1}{w(u_2^{(n)})} - \tfrac{1}{w(u_1^{(n)})}\Big)^{2}
u_{1}^{(n)} 
\Big]
\\
&\leq O(1)\Big[\|\tb_{1} - \tb_{2}\| 
 + \rho_{0}^{2}\big(u_1^{(n)}, u_2^{(n)}\big)\Big]
\leq O(1)\Big[\|\tb_{1} - \tb_{2}\| 
 + 2\rho_{0}^{2}\big(u_1, u_2\big)\Big],  
\end{align*}
where the last inequality follows by Lemma~\ref{lemma about rho0}(iii) 
in Appendix~D.

\medskip 

Finally we show that also \textbf{the third assumption} \eqref{eq: weak second moment} is 
satisfied. Let $B(\epsilon)$ be a fixed $\epsilon$-ball. 
Then from the properties of the Euclidean norm and 
the function~$\rho_{0}$ (see Lemma~\ref{lemma about rho0}(iv) in Appendix~D),  
there exist $\tb_{0} \in T_{1}$ and $u_{L},u_{U} \in (0, \tfrac{1}{2}]$ such 
that 
\[
 \mathcal{B}(\epsilon) \subset T_{\epsilon} \times [u_{L},u_{U}], 
\quad \text{where} \quad T_{\epsilon} = \big\{\tb: \sqrt{\|\tb - \tb_{0}\|} \leq \tfrac{\epsilon}{K} \big\} 
\quad \text{and} \quad \rho_{0}(u_{L},u_{U}) < \tfrac{2\epsilon}{K}.  
\]

Then one can bound 
\begin{align} 
%
%
\notag 
n\, &\sup_{v > 0} v^{2}\, \Pr\bigg( \sup_{(\tb_1,u_1), (\tb_2,u_2) \in B(\epsilon)} 
 \big|Z_{ni}(\tb_2,u_2) -  Z_{ni}(\tb_1,u_1)\big|  
 >  v \bigg)
\\
%
 &\leq  2 \sup_{v > 0} v^{2}\, \Pr\bigg( 
\sup_{(\tb,u) \in T_{\epsilon} \times [u_{L}, u_{U}]}  
 \sqrt{n}\,\big|Z_{ni}(\tb,u) -  Z_{ni}(\tb_{0},u_{U})\big|  
 >  v/2 \bigg)
%
\label{eq: bound on weak second moment} 
\end{align} 
%

To deal with the last probability introduce 
%
\[
 G_{j\xb}^{(L)} = \inf_{(\tb, u) \in T_{\epsilon} \times [u_{L},u_{U}]} F_{j\eps}\big(y_{j\xb}(\tbn,u^{(n)})\big), 
 \quad 
 G_{j\xb}^{(U)} = \sup_{(\tb, u) \in T_{\epsilon} \times [u_{L},u_{U}]} F_{j\eps}\big(y_{j\xb}(\tbn,u^{(n)})\big).   
\]
Then one can bound 
%
\begin{align}
\notag 
&\sup_{(\tb, u) \in T_{\epsilon} \times [u_{L},u_{U}]} 
 \sqrt{n}\,\big|Z_{ni}(\tb,u) -  Z_{ni}(\tb_0,u_{U})\big|  
\\*
\notag 
& = \sup_{(\tb, u) \in T_{\epsilon} \times [u_{L},u_{U}]} 
 \Big| 
\tfrac{\ind\{U_{ji} \leq F_{j\eps}(y_{j\Xb_{i}}(\tbn,u^{(n)}))\}}{w(u^{(n)})}
- \tfrac{\ind\{U_{ji} \leq F_{j\eps}(y_{j\Xb_{i}}(\tbn_{0},u_{U}^{(n)}))\}}{w(u_{U}^{(n)})}
 \Big|\,  
\ind\big\{M_{j}(\Xb_{i}) \leq a_n\big\} 
\\
\notag 
& \leq \sup_{(\tb, u) \in T_{\epsilon} \times [u_{L},u_{U}]} 
 \Big| 
\tfrac{\ind\{U_{ji} \leq F_{j\eps}(y_{j\Xb_{i}}(\tbn,u^{(n)}))\} 
- \ind\{U_{ji} \leq F_{j\eps}(y_{j\Xb_{i}}(\tbn_{0},u_{U}^{(n)}))\}}{w(u_{U}^{(n)})}
\Big|\,\ind\big\{M_{j}(\Xb_{i}) \leq a_n\big\}  
\\
\notag 
& 
\quad +  \sup_{(\tb, u) \in T_{\epsilon} \times [u_{L},u_{U}]} 
 \Big| 
\ind\{U_{ji} \leq F_{j\eps}(y_{j\Xb_{i}}(\tbn,u^{(n)}))\}
\Big(\tfrac{1}{w(u^{(n)})}
 - \tfrac{1}{w(u_{U}^{(n)})}\Big)
 \Big| \,\ind\big\{M_{j}(\Xb_{i}) \leq a_n\big\}  
\\
\label{eq: bound on diff of Zn1}
& \leq 
\Big[
\tfrac{\ind\{G_{j\Xb_{i}}^{(L)} \leq U_{ji} \leq G_{j\Xb_{i}}^{(U)}\}}{w(u_{U}^{(n)})}
 + \ind\{U_{ji} \leq u_{L}\} \Big(\tfrac{1}{w(u_{L}^{(n)})}- 
\tfrac{1}{w(u_{U}^{(n)})}\Big) \Big]
\ind\big\{M_{j}(\Xb_{i}) \leq a_n\big\}.  
\\
\notag 
& + \sup_{(\tb, u) \in T_{\epsilon} \times [u_{L},u_{U}]} 
 \ind\{u_{L} \leq U_{ji} \leq F_{j\eps}(y_{j\Xb_{i}}(\tbn,u^{(n)}))\}
\Big(\tfrac{1}{w(u^{(n)})}
 - \tfrac{1}{w(u_{U}^{(n)})}\Big)
\ind\big\{M_{j}(\Xb_{i}) \leq a_n\big\}   
\\
\notag  
& = V_{n1} + V_{n2}, 
\end{align}
where $V_{n1}$, $V_{n2}$ stand for the first and second term on the right-hand side of 
\eqref{eq: bound on diff of Zn1} respectively.

Now similarly as 
in~\eqref{eq: bound on the first term in second moments of Zni} 
one can bound the second moment of~$V_{n1}$ as  
%
\begin{align}
\notag 
\E\, V_{n1}^{2} & \leq \E\,\Big[ 
\tfrac{2(G_{j\Xb_{i}}^{(U)} - G_{j\Xb_{i}}^{(L)})}{w^{2}(u_{U}^{(n)})}
 \ind\big\{M_{j}(\Xb_{i}) \leq a_n\big\} \Big] 
 + 2\,u_{L} \Big(\tfrac{1}{w(u_{L}^{(n)})}- 
\tfrac{1}{w(u_{U}^{(n)})}\Big)^{2}
\\
\notag 
& \leq \tfrac{O(1)}{w^{2}(u_{U}^{(n)}) } \sup_{(\tb, u) \in T_{\epsilon} \times [u_{L},u_{U}]} \big[\|\tb - \tb_{0}\|\, 
w(u_{U}^{(n)})\, O(n^{-1/2+\eta}) +|u_{U}-u_{L}|\big]
\\
& \notag \qquad +
2\,u_{L}\Big(\tfrac{1}{w(u_{L}^{(n)})} - \tfrac{1}{w(u_{U}^{(n)})}\Big)^2
\\
\notag 
& = 
O\big(1\big)\big[\tfrac{\epsilon^{2}}{K}  + \tfrac{u_{U}-u_{L}}{w^2(u_{U})}\big] 
 + 2\,u_{L}\Big(\tfrac{1}{w(u_{L})} - \tfrac{1}{w(u_{U})}\Big)^2 
 = O\big(\tfrac{\epsilon^{2}}{K}\big)  + O\big(\tfrac{\rho_{0}^{2}(u_{L}, u_{U})}{K}\big) 
 \leq  \tfrac{\epsilon^{2}}{64},  
\end{align}
provided that $K$ in the definition of the semimetric~\eqref{eq: rho} is taken sufficiently large. 

Thus also by Markov's inequality 
\begin{equation} \label{eq: weak second moment of Vn1} 
 \sup_{v >0} v^{2}\, \Pr(V_{n1} > \tfrac{v}{4}) \leq \tfrac{\epsilon^{2}}{4}.   
\end{equation}

Now we can concentrate on the second term in~\eqref{eq: bound on diff of Zn1}. 
To do so note that from the definition of the semimetric~$\rho_{0}$ 
in~\eqref{eq: rho0} it follows that for each $u \in [u_{L},u_{U}]$
\[
\big(\tfrac{1}{w(u)}-\tfrac{1}{w(u_{U})}\big)^2  
 \leq \tfrac{\rho_{0}^{2}(u, u_{U})}{u} \leq \tfrac{4\epsilon^{2}}{K^{2}\,u}, 
\]
which further implies that 
%
\[
 \Big(\tfrac{1}{w(u^{(n)})}
 - \tfrac{1}{w(u_{U}^{(n)})}\Big) \leq \tfrac{2\epsilon}{K\sqrt{u}}\,. 
\]
Using the above inequality 
one can bound (with probability going to one)
\begin{align}
\notag 
V_{n2} & \leq 
\tfrac{2\epsilon\,\sup_{(\tb, u) \in T_{\epsilon} \times [u_{L},u_{U}]} 
 \ind\{u_{L} \leq U_{ji} \leq F_{j\eps}(y_{j\Xb_{i}}(\tbn,u^{(n)}))\}
}{K \sqrt{u^{(n)}}}
\ind\big\{M_{j}(\Xb_{i}) \leq a_n\big\}  
\\
\notag  
& \leq 
\tfrac{2\epsilon\,\sup_{(\tb, u) \in T_{\epsilon} \times [u_{L},u_{U}]} 
 \ind\{u_{L} \leq U_{ji} \leq F_{j\eps}(y_{j\Xb_{i}}(\tbn,u^{(n)}))\}
}{K \sqrt{F_{j\eps}(y_{j\Xb_{i}}(\tbn,u^{(n)}))}}\, 
\sqrt{\tfrac{F_{j\eps}(y_{j\Xb_{i}}(\tbn,u^{(n)}))}{u^{(n)}}}
\ind\big\{M_{j}(\Xb_{i}) \leq a_n\big\}  
\\
\notag 
& \leq 
\tfrac{2\epsilon}{K\sqrt{U_{ji}}}\, 
\sqrt{2}, 
\end{align}
where we have used that thanks to \eqref{eq: bound on yjx} 
\[
 \sqrt{\tfrac{F_{j\eps}(y_{j\Xb_{i}}(\tbn,u^{(n)}))}{u^{(n)}}} 
 \leq \sqrt{\tfrac{2u^{(n)}}{u^{(n)}}} \leq \sqrt{2}
\]
and for each $\tb, u$  
\[
 \tfrac{\ind\{u_{L} \leq U_{ji} \leq F_{j\eps}(y_{j\Xb_{i}}(\tbn,u^{(n)}))\}
}{\sqrt{F_{j\eps}(y_{j\Xb_{i}}(\tbn,u^{(n)}))}} 
\leq 
\tfrac{\ind\{u_{L} \leq U_{ji} \leq F_{j\eps}(y_{j\Xb_{i}}(\tbn,u^{(n)}))\}
}{\sqrt{U_{ji}}} \leq \tfrac{1}{\sqrt{U_{ji}}}\,. 
\]

Thus we can bound 
\begin{equation} \label{eq: second weak moment of Vn2}
 \sup_{v >0} v^{2}\, \Pr\big(V_{n2} > \tfrac{v}{4}\big) 
 \leq \sup_{v >0} v^{2}\, \Pr\Big(\tfrac{2\epsilon\,\sqrt{2}}{K\sqrt{U_{ji}}}  > \tfrac{v}{4} \Big) 
 = \sup_{v >0} v^{2}\, \Pr\Big( U_{ji} < \tfrac{128\,\epsilon^{2}}{K^{2}\,v^2} \Big) 
 \leq \tfrac{\epsilon^{2}}{4}  
\end{equation}
for a sufficiently large~$K$. 
Now combining \eqref{eq: weak second moment of Vn1} and 
\eqref{eq: second weak moment of Vn2} yields that 
\[
2 \sup_{v > 0} v^{2}\, \Pr\bigg( \sup_{(\tb,u) \in T_{\epsilon} \times [u_{L},u_{U}]} 
 \sqrt{n}\,\big|Z_{ni}(\tb,u) -  Z_{ni}(\tb_{0},u_{U})\big|  
 >  v/2 \bigg) \leq \epsilon^2, 
\] 
which together with \eqref{eq: bound on weak second moment} implies 
that also \eqref{eq: weak second moment} is satisfied.  

\end{proof}

Note that while $\lambda_{x}$ is only a cleverly chosen constant 
in Lemma~\ref{lemma about hatFjhateps} that is not 
involved in the statement, in the following lemmas 
we will speak about $\mathrm{J}_{jn}^{X}$ and thus we need 
to be more specific about $\lambda_{x}$. Thus in what follows 
we often assume that 
\begin{equation} \label{eq: lambdax lower bound}
\tfrac{1}{\lambda_{x} r} 
 < \tfrac{1}{2} - \tfrac{1-\beta}{\lambda}.  
\end{equation}

\begin{lemma} \label{lemma Ujhat minus Ujtilde}
Suppose that the assumptions of Lemma~\ref{lemma about hatFjhateps} are satisfied and $\lambda_{x}$ satisfies~\eqref{eq: lambdax lower bound}.  
Then it holds uniformly in $k  \in \mathrm{J}_{jn}^{X}$
 \begin{align*} 
\notag 
  \widehat{U}_{jk} - \widetilde{U}_{jk}
 &=  f_{j\eps}(\eps_{jk})\,\E_{\Xb}\Big[\tfrac{m'_{j}(\Xb;\thetab_{j})}{s_{j}(\Xb;\thetab_{j})}
+ \eps_{jk}\,\tfrac{s'_{j}(\Xb;\thetab_{j})}{s_{j}(\Xb;\thetab_{j})}\Big]\tr
 (\widehat{\thetab}_{j}-\thetab_{j}) + f_{j\eps}(\eps_{jk}) \big(\widehat{\eps}_{jk} - \eps_{jk} \big) 
\\
 & \quad  + U_{jk}^{(\beta - \gamma)_{+}}(1-U_{jk})^{(\beta - \gamma)_{+}}
   [M_{j}(\Xb_{k})+1]\,o_{P}\big(\tfrac{1}{\sqrt{n}}\big) 
\end{align*}
for each $\gamma > 0$ and $j \in \{1,\dotsc,d\}$. 
%
\end{lemma}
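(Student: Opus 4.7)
The plan is to write
\[
 \Uhat_{jk} - \Utilde_{jk} = \tfrac{n}{n+1}\,\big[\hatF_{j\hateps}(\hateps_{jk}) - \hatF_{j\eps}(\eps_{jk})\big]
\]
and decompose the bracketed difference as $A_k + B_k + C_k$, where
$A_k = \hatF_{j\hateps}(\hateps_{jk}) - \hatF_{j\eps}(\hateps_{jk})$,
$B_k = [\hatF_{j\eps}(\hateps_{jk}) - F_{j\eps}(\hateps_{jk})] - [\hatF_{j\eps}(\eps_{jk}) - F_{j\eps}(\eps_{jk})]$, and
$C_k = F_{j\eps}(\hateps_{jk}) - F_{j\eps}(\eps_{jk})$.
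The piece $A_k$ should supply the $\E_{\Xb}$-adjustment, $C_k$ the term $f_{j\eps}(\eps_{jk})(\hateps_{jk}-\eps_{jk})$, and $B_k$ must be absorbed into the stated remainder. The factor $n/(n+1)$ differs from~$1$ by $O(1/n)$ and is harmless.

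First, I would apply Lemma~\ref{lemma about hatFjhateps} at the random point $u_k = F_{j\eps}(\hateps_{jk})$ to expand $A_k$. The prerequisite $u_k \in [\delta_n/2, 1-\delta_n/2]$ is verified on $\mathrm{J}_{jn}^{X}$ from the Taylor bound
\[
 |\hateps_{jk} - \eps_{jk}| \leq (1+|\eps_{jk}|)\,M_j(\Xb_k)\,\|\hatthetab_j - \thetab_j\|\,(1+o_P(1)) \leq a_n(1+|\eps_{jk}|)\,O_P(n^{-1/2}),
\]
combined with $U_{jk} \in [\delta_n, 1-\delta_n]$ and the regularity of $f_{j\eps}\circ F_{j\eps}^{-1}$ near $0$ and $1$ encoded in $(\mathbf{F}_{j\eps})$. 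Lemma~\ref{lemma about hatFjhateps} then delivers the main term $f_{j\eps}(\hateps_{jk})\,\E_{\Xb}\big[\tfrac{m'_j}{s_j} + \hateps_{jk}\tfrac{s'_j}{s_j}\big]\tr(\hatthetab_j - \thetab_j)$ plus a remainder of the advertised order, after comparing $u_k^{(\beta-\gamma)_+}$ with $U_{jk}^{(\beta-\gamma)_+}$ using $(\mathbf{F}_{j\eps})$. For $C_k$ the mean value theorem gives $C_k = f_{j\eps}(\eps^*_k)(\hateps_{jk} - \eps_{jk})$ for some $\eps^*_k$ between $\eps_{jk}$ and $\hateps_{jk}$; replacing $f_{j\eps}(\eps^*_k)$ by $f_{j\eps}(\eps_{jk})$ via a continuity estimate analogous to \eqref{eq: fjeps et yjx divided by u to beta} produces the second main term at the stated cost. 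Substituting $f_{j\eps}(\eps_{jk})$ and $\eps_{jk}$ for $f_{j\eps}(\hateps_{jk})$ and $\hateps_{jk}$ in the main term of $A_k$ costs only another remainder of the same form, because the replacement error is multiplied by $\hatthetab_j - \thetab_j = O_P(n^{-1/2})$.

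The main obstacle is controlling $B_k$ uniformly over $k \in \mathrm{J}_{jn}^{X}$. After the change of variable $t = F_{j\eps}(y)$, $B_k$ is an increment of the centred empirical process $\sqrt n\,(G_n(t) - t)$ with $G_n = \hatF_{j\eps}\circ F_{j\eps}^{-1}$, taken over the random interval with endpoints $U_{jk}$ and $F_{j\eps}(\hateps_{jk})$, whose length is of order $n^{-1/2}$ times a factor involving $M_j(\Xb_k)$ and, through $(\mathbf{F}_{j\eps})$, a weight of order $U_{jk}^{\beta}(1-U_{jk})^{\beta}$. A Shorack--Wellner-type modulus of continuity inequality for the uniform empirical process, applied on $[\delta_n, 1-\delta_n]$, then yields $B_k = U_{jk}^{(\beta-\gamma)_+}(1-U_{jk})^{(\beta-\gamma)_+}[M_j(\Xb_k)+1]\,o_P(n^{-1/2})$ uniformly in $k$. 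Matching the advertised weight exactly is the delicate step: it is where the slack $\gamma > 0$ and the specific tuning of $\lambda$ and $\lambda_x$ imposed by \eqref{eq: deltan and an} and \eqref{eq: lambdax lower bound} are consumed, and where the standing $\beta > 0$ is essential so that a positive power at the endpoints is available to absorb the heterogeneous contribution of $M_j(\Xb_k) \leq a_n$.
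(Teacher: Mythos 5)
Your decomposition $A_k+B_k+C_k$ is exactly the one the paper uses: $A_k$ is handled by substituting the random point $u=F_{j\eps}(\hateps_{jk})$ into Lemma~\ref{lemma about hatFjhateps} (after checking $u\in[\delta_n/2,1-\delta_n/2]$ and trading $\hateps_{jk}$, $f_{j\eps}(\hateps_{jk})$ and $u^{(\beta-\gamma)_+}$ for their $\eps_{jk}$- and $U_{jk}$-counterparts at the cost of the stated remainder), $C_k$ by the mean value theorem together with the continuity estimates \eqref{eq: fjeps et yjx divided by u to beta}--\eqref{eq: fjeps et yjx times Fjinv divided by u to beta}, and $B_k$ --- the increment of the centred marginal empirical process --- by stochastic equicontinuity. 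The only difference is one of packaging: for $B_k$ the paper re-runs the weighted-process tightness argument of Lemma~\ref{lemma about hatFjhateps} (Theorem 2.11.11 of van der Vaart and Wellner, applied to a process indexed by the perturbation parameter and $u$) rather than invoking a Shorack--Wellner oscillation modulus, but both tools deliver the same $U_{jk}^{(\beta-\gamma)_+}(1-U_{jk})^{(\beta-\gamma)_+}[M_j(\Xb_k)+1]\,o_P(n^{-1/2})$ control.
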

\begin{proof}
The lemma will be shown by substitution of 
$u = F_{j\eps}(\hateps_{jk})$  into the approximation 
\eqref{eq: approximation of hatFjhateps} stated 
in Lemma~\ref{lemma about hatFjhateps}. Note that all the following 
statements holds uniformly in $k  \in \mathrm{J}_{jn}^{X}$.  

The proof will be divided into \textbf{four steps}. First we 
show that with probability going to one 
%
\begin{equation} \label{eq: Feps at hateps}
 F_{j\eps}(\hateps_{jk}) \in [\delta_n/2, 1 - \delta_n/2] 
\end{equation}
 to justify the substitution into~\eqref{eq: approximation of hatFjhateps}. 
Second 
\begin{equation} \label{eq: Feps at hateps to beta}
F_{j\eps}(\hateps_{jk})^{(\beta - \gamma)_{+}}\big(1-F_{j\eps}(\hateps_{jk})\big)^{(\beta - \gamma)_{+}}\,o_{P}(1) 
= U_{jk}^{(\beta - \gamma)_{+}}(1-U_{jk})^{(\beta - \gamma)_{+}}o_{P}(1).  
\end{equation}
Next we show that 
\begin{align} \label{eq: feps at hateps}
 f_{j\eps}(\hateps_{jk}) &= f_{j\eps}(\eps_{jk}) +  U_{jk}^{(\beta - \gamma)_{+}}(1-U_{jk})^{(\beta - \gamma)_{+}} o_{P}(1), 
\\
\label{eq: feps at hateps times Fjinv}
f_{j\eps}(\hateps_{jk})\hateps_{jk} &= f_{j\eps}(\eps_{jk})F_{j\eps}^{-1}(U_{jk}) +  U_{jk}^{(\beta - \gamma)_{+}}(1-U_{jk})^{(\beta - \gamma)_{+}} o_{P}(1), 
\end{align}
and finally we derive 
\begin{align} 
\notag 
  \hatF_{j\eps}(\hateps_{jk}) &= \hatF_{j\eps}(\eps_{jk}) 
 + f_{j\eps}(\eps_{jk})(\hateps_{jk} - \eps_{jk}) 
\\
\label{eq: hatFeps at hateps}
& \qquad   
+ U_{jk}^{(\beta - \gamma)_{+}}(1-U_{jk})^{(\beta - \gamma)_{+}}( M_{j}(\Xb_{k})+1) o_{P}\big(\tfrac{1}{\sqrt{n}}\big)
\end{align}
and realise that $\widehat{U}_{jk} = \hatF_{j\hateps}(\hateps_{jk})$ and $\widetilde{U}_{jk} = \hatF_{j\eps}(\eps_{jk})$. 

\medskip 

\noindent \textit{Showing \eqref{eq: Feps at hateps}.} 

Analogously as in \eqref{eq: diff of Fjeps at different tb final}   
for $k \in \mathrm{J}_{jn}^{X}$
%
\begin{align*} 
\notag 
 \big|F_{j\eps}(\hateps_{jk}) - U_{jk}\big| 
 & = \big|F_{j\eps}(y_{j\Xb_{k}}\big(\hatthetab_{j}, F_{j\eps}^{-1}(U_{jk}))\big)
 - F_{j\eps}(y_{j\Xb_{k}}\big(\thetab_{j}, F_{j\eps}^{-1}(U_{jk}))\big) \big|
\\
\notag 
& \leq O_{P}(1)\,M_{j}(\Xb_{k}) \|\hatthetab_j -  \thetab_j\|\, U_{jk}^{\beta} 
(1-U_{jk})^{\beta}
\\
%
 &\leq O_{P}(n^{1/(\lambda_{x} r) - 1/2})\, U_{jk}^{\beta} (1-U_{jk})^{\beta}, 
\end{align*}
%
This further implies that 
%
\begin{equation} \label{eq: Fjeps at hateps minus Ujk}
  \tfrac{|F_{j\eps}(\hateps_{jk}) - U_{jk}|}{U_{jk}(1-U_{jk})} 
  \leq O_{P}(n^{1/(\lambda_{x} r) -1/2})\, \delta_{n}^{\beta - 1} 
  = O_{P}(n^{1/(\lambda_{x} r) -1/2  + (1-\beta)/\lambda}) = o_{P}(1), 
\end{equation}
 where we have used that $\lambda_{x}$ satisfies~\eqref{eq: lambdax lower bound}.  
Thus for a sufficiently large~$n$  one gets that 
\[
 F_{j\eps}(\hateps_{jk}) \geq \tfrac{U_{jk}}{2} \geq \tfrac{\delta_{n}}{2} 
\]
and analogously also 
\[ 
 F_{j\eps}(\hateps_{jk}) \leq U_{jk} + \tfrac{1}{2}(1-U_{jk}) \leq 1 - \delta_{n}  
 + \tfrac{\delta_{n}}{2} = 1 - \tfrac{\delta_{n}}{2}.   
\]

\medskip 

\noindent \textit{Showing \eqref{eq: Feps at hateps to beta}.} 
 
Note that with the help of \eqref{eq: Fjeps at hateps minus Ujk} one can conclude that 
\[
 F_{j\eps}(\hateps_{jk}) \in \big(\tfrac{1}{2}\,U_{jk}, \tfrac{3}{2}\,U_{jk}\big), 
 \quad \text{and} \quad  
 1- F_{j\eps}(\hateps_{jk}) \in \big(\tfrac{1}{2}(1-U_{jk}), \tfrac{3}{2}(1-U_{jk})\big), 
\]
 which implies~\eqref{eq: Feps at hateps to beta}.

\medskip 

\noindent \textit{Showing \eqref{eq: feps at hateps} and~\eqref{eq: feps at hateps times Fjinv}.}
This follows from \eqref{eq: Feps at hateps}, \eqref{eq: fjeps et yjx divided by u to beta} 
and \eqref{eq: fjeps et yjx times Fjinv divided by u to beta}.

\medskip 

\noindent \textit{Showing \eqref{eq: hatFeps at hateps}.} 
\nopagebreak 

Without loss of generality consider only those $k \in \mathrm{J}_{jn}^{X}$ 
for which  $U_{jk} \leq \tfrac{1}{2}$.  
Now for $\eta \in (0, \tfrac{1}{2}-\tfrac{\beta}{\lambda}-\tfrac{1}{\lambda_{x}r}$) 
introduce 
\[
 r_{n} =  n^{1/2 - 1/(\lambda_{x} r) - \eta}\,. 
\]
%
Similarly as in the proof of Lemma~\ref{lemma about hatFjhateps} 
define for $i \in \{1,\dotsc,n\}$ the processes 
\[
  Z_{ni}(\tb,u) = \tfrac{1}{w(u^{(n)})\sqrt{n}}
 \,\ind\Big\{U_{ji} \leq  F_{j\eps}\Big(F_{j\eps}^{-1}(u^{(n)})\big(1+\tfrac{t_{1}}{r_{n}}\big)+\tfrac{t_{2}}{r_{n}}\Big)\Big\}
\]
that are indexed by the set $\mathcal{F} = [-1,1]^2 \times (0,\tfrac{1}{2}]$. Now one 
can write $\hatF_{j\eps}(\hateps_{jk})$ as 
\[
 \hatF_{j\eps}(\hateps_{jk}) = \frac{w(\hat{u}^{(n)})}{\sqrt{n}}\suman Z_{ni}(\hat{\tb}_{n},\hat{u}),  
\]
where 
\[
 \hat{\tb}_{n} =  
 \Big(
r_{n}\Big[\tfrac{s_{j}(\Xb_{k};\thetab_j)}{s_{j}(\Xb_{k};\hatthetab_j)}-1\Big], 
\tfrac{r_{n}[m_{j}(\Xb_{k};\thetab_j) - m_{j}(\Xb_{k};\hatthetab_j)]}{s_{j}(\Xb_{k};\hatthetab_j)} 
\Big), \quad
\hat{u} = F_{j\eps}(\eps_{jk}).    
\]
Note that for $k \in \mathrm{J}_{jn}^{X}$ 
\begin{equation} \label{eq: hatU minus tilde as empir mean}
 \hatF_{j\eps}(\hateps_{jk}) - \hatF_{j\eps}(\eps_{jk}) 
 = \frac{w(\hat{u}_{n})}{\sqrt{n}}\suman \big[Z_{ni}(\hat{\tb}_{n},\hat{u}_{n}) - Z_{ni}(\boldsymbol{0},\hat{u}_{n})
 \big]. 
\end{equation}

Now equip the space $\mathcal{F}$ with the semimetric $\rho$ given by 
\begin{equation*} 
 \rho\big((\tb_1,u_1), (\tb_2,u_2)\big) = K\,\sqrt{\|\tb_1 - \tb_2\| 
 + \tfrac{u_{2} - u_{1}}{w^2(u_2)} + \big(\tfrac{1}{w(u_1)}-\tfrac{1}{w(u_2)}\big)^2 
  u_1}, \quad \text{for} \quad u_1 \leq u_2,  
\end{equation*}
where $K$ is a sufficiently large but finite constant. Then completely analogously 
as in the proof of Lemma~\ref{lemma about hatFjhateps} one can verify 
the assumptions of Theorem 2.11.11 of \cite{vaart_wellner}. Thus 
$\sup_{u \in (0,\frac{1}{2}]} 
 \rho\big((\hat{\tb}_{n},u), (\boldsymbol{0},u)\big) = o_{P}(1)$, 
implies that 
%
\begin{align*} \label{eq: using asympt tightness}
  \suman Z_{ni}(\hat{\tb}_{n},\hat{u}_{n})  - Z_{ni}(\boldsymbol{0},\hat{u}_{n})  
   &= \suman \E_{U,\Xb} \big[Z_{ni}(\hat{\tb}_{n},\hat{u}_{n}) - Z_{ni}(\boldsymbol{0},\hat{u}_{n} )\big] + o_{P}(1)
\\ 
  &= \frac{\sqrt{n}}{w(U_{jk})} \big[ F_{j\eps}(\hateps_{jk}) - F_{j\eps}(\eps_{jk})\big] + o_{P}(1),
\end{align*}
which together with \eqref{eq: hatU minus tilde as empir mean} implies that 
\begin{equation} \label{eq: diff of hatFeps}
 \hatF_{j\eps}(\hateps_{jk}) - \hatF_{j\eps}(\eps_{jk}) 
  = F_{j\eps}(\hateps_{jk}) - F_{j\eps}(\eps_{jk}) + w(U_{jk})\,o_{P}\big(\tfrac{1}{\sqrt{n}}\big).   
\end{equation}
Now the right-hand side of the above equations can be 
with the help of \eqref{eq: fjeps et yjx divided by u to beta} 
and \eqref{eq: fjeps et yjx times Fjinv divided by u to beta} rewritten as 
\[
 F_{j\eps}(\hateps_{jk}) - F_{j\eps}(\eps_{jk})
 = f_{j\eps}(\eps_{jk})\big(\hateps_{jk}- \eps_{jk}\big) +  
    w(U_{jk}) \,M_{j}(\Xb_{k}) o_{P}\big(\tfrac{1}{\sqrt{n}}\big), 
\]
which combined with \eqref{eq: diff of hatFeps} implies \eqref{eq: hatFeps at hateps}.
\end{proof}

\begin{lemma} \label{lemma linear bound for hatU}
Suppose that the assumptions of Lemma~\ref{lemma Ujhat minus Ujtilde} are satisfied 
and $\beta > 0$. Then for each $\epsilon > 0$ there exists $L_{\epsilon} > 0$ such that 
for each $j \in \{1,\dotsc,d\}$ for all sufficiently large~$n$   
\begin{equation*} 
 \Pr\big( \forall_{k \in \mathrm{J}_{jn}^{X}} \;  \; L_{\epsilon}\,U_{jk} \leq \widehat{U}_{jk} \leq
     1 - L_{\epsilon}\,(1 - U_{jk}) \big)
  \geq 1 - \epsilon.
\end{equation*}
\end{lemma}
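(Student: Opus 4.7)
The plan is to combine two ingredients: the pointwise expansion of $\widehat{U}_{jk}-\widetilde{U}_{jk}$ supplied by Lemma~\ref{lemma Ujhat minus Ujtilde} with a Shorack--Wellner-type comparison between $\widetilde{U}_{jk}$ and $U_{jk}$. The role of the hypothesis $\beta>0$ will be to damp out the $\delta_{n}^{-1}$ blow-up that would otherwise arise when dividing the remainder in the first ingredient by $U_{jk}\ge \delta_n$.

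First I would apply Lemma~\ref{lemma Ujhat minus Ujtilde}. Using $(\mathbf{F}_{j\eps})$ to bound both $f_{j\eps}(\eps_{jk})$ and $|f_{j\eps}(\eps_{jk})\eps_{jk}|$ by a constant multiple of $U_{jk}^{\beta}(1-U_{jk})^{\beta}$, together with $\boldsymbol{(ms)}$ and the elementary estimate $|\widehat{\eps}_{jk}-\eps_{jk}|\le C\,M_{j}(\Xb_{k})(1+|\eps_{jk}|)\|\widehat{\thetab}_{j}-\thetab_{j}\|$, all three summands in the expansion are dominated, uniformly in $k\in \mathrm{J}_{jn}^{X}$ and for any fixed $\gamma\in(0,\beta)$, by
\begin{equation*}
U_{jk}^{\beta-\gamma}(1-U_{jk})^{\beta-\gamma}\bigl(1+M_{j}(\Xb_{k})\bigr)\,O_{P}(n^{-1/2}).
\end{equation*}
On $\mathrm{J}_{jn}^{X}$ one has $M_{j}(\Xb_{k})\le a_{n}=n^{1/(\lambda_{x}r)}$ and $U_{jk}\ge \delta_{n}=n^{-1/\lambda}$, so dividing by $\min\{U_{jk},1-U_{jk}\}$ yields
\begin{equation*}
\sup_{k\in \mathrm{J}_{jn}^{X}}\frac{|\widehat{U}_{jk}-\widetilde{U}_{jk}|}{\min\{U_{jk},1-U_{jk}\}}\le n^{(1-\beta+\gamma)/\lambda+1/(\lambda_{x}r)-1/2}\,O_{P}(1)=o_{P}(1),
\end{equation*}
where the last equality follows from~\eqref{eq: lambdax lower bound} after choosing $\gamma>0$ small enough to keep the exponent strictly negative. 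This is precisely where the assumption $\beta>0$ enters: if $\beta=0$ no admissible $\gamma>0$ can preserve $\beta-\gamma>0$, and the factor $\delta_{n}^{-1}=n^{1/\lambda}$ would dominate.

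Second, writing $\widetilde{U}_{jk}=\tfrac{n}{n+1}\hatF_{j\eps}(\eps_{jk})=\tfrac{n}{n+1}G_{n}(U_{jk})$, where $G_{n}$ denotes the empirical cdf of the iid uniform random variables $U_{ji}$, the classical Shorack--Wellner inequality (cf.\ Lemma~A3 of \citet{shorack1972functions}) delivers: for every $\eps>0$ there exists $L_{1}>0$ such that, on an event of probability at least $1-\eps/2$,
\begin{equation*}
\frac{\widetilde{U}_{jk}}{U_{jk}}\ge \frac{1}{L_{1}}\qquad\text{and}\qquad \frac{1-\widetilde{U}_{jk}}{1-U_{jk}}\ge \frac{1}{L_{1}},\qquad \text{uniformly over all $k$ with $U_{jk}\in[\delta_{n},1-\delta_{n}]$},
\end{equation*}
which applies since $\delta_{n}\ge 1/n$ for $n$ sufficiently large. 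Combining the two estimates on the intersection event (of probability at least $1-\eps$ for $n$ large),
\begin{equation*}
\frac{\widehat{U}_{jk}}{U_{jk}}\ge \frac{\widetilde{U}_{jk}}{U_{jk}}-\frac{|\widehat{U}_{jk}-\widetilde{U}_{jk}|}{U_{jk}}\ge \frac{1}{L_{1}}-o_{P}(1)\ge \frac{1}{2L_{1}},
\end{equation*}
uniformly over $k\in\mathrm{J}_{jn}^{X}$, and the upper bound $\widehat{U}_{jk}\le 1-L_{\eps}(1-U_{jk})$ follows symmetrically from the analogous comparison for $1-\widetilde{U}_{jk}$. Setting $L_{\eps}:=1/(2L_{1})$ concludes.

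The main obstacle is the uniform-in-$k$ remainder control of the first step; the delicate bookkeeping is to verify that the composite exponent $(1-\beta+\gamma)/\lambda+1/(\lambda_{x}r)-1/2$ is strictly negative, which jointly requires $\beta>0$, the lower bound on $\lambda$ provided by Lemma~\ref{lemma about hatFjhateps}, and the constraint~\eqref{eq: lambdax lower bound} on $1/(\lambda_{x}r)$. The complementary case $\beta=0$ is necessarily handled by a different route (the one underlying Lemma~\ref{lemma: as normality of functions of ranks second}), consistent with the explicit appearance of $\beta>0$ in the hypothesis.
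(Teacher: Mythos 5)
Your proof is correct and follows essentially the same route as the paper's: a Shorack--Wellner (Lemma~A3) comparison of $\widetilde{U}_{jk}$ with $U_{jk}$ on an event of probability at least $1-\epsilon/2$, combined with the expansion of Lemma~\ref{lemma Ujhat minus Ujtilde} to show that $|\widehat{U}_{jk}-\widetilde{U}_{jk}|/\min\{U_{jk},1-U_{jk}\}=o_P(1)$ uniformly over $\mathrm{J}_{jn}^{X}$ via the bounds $U_{jk}\ge\delta_n$, $M_j(\Xb_k)\le a_n$ and~\eqref{eq: lambdax lower bound}. The only cosmetic difference is that the paper bounds the three summands of the remainder term separately rather than through a single common envelope.
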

\begin{proof}
We concentrate on the inequality $L_{\epsilon}\,U_{jk} \leq \widehat{U}_{jk}$. 
Showing the upper inequality for $\widehat{U}_{jk}$ would be analogous. 

By Lemma~\ref{lemma Ujhat minus Ujtilde} one gets 
$\widehat{U}_{jk} \geq \widetilde{U}_{jk} - |R_{jk}|$, where 
\begin{align}
\notag 
 R_{jk}&=  f_{j\eps}(\eps_{jk})\,\E_{\Xb}\Big[\tfrac{m'_{j}(\Xb;\thetab_{j})}{s_{j}(\Xb;\thetab_{j})}
+ \eps_{jk}\,\tfrac{s'_{j}(\Xb;\thetab_{j})}{s_{j}(\Xb;\thetab_{j})}\Big]\tr
 (\widehat{\thetab}_{j}-\thetab_{j}) + f_{j\eps}(\eps_{jk}) 
\big(\widehat{\eps}_{jk} - \eps_{jk} \big) 
\\
\label{eq: Rij}
 & \quad 
 + U_{jk}^{\beta-\gamma}(1-U_{jk})^{\beta-\gamma}
   [M_{j}(\Xb_{k})+1] o_{P}\big(\tfrac{1}{\sqrt{n}}\big).  
\end{align}
and $\gamma > 0$ can be taken arbitrarily small. 

Now by Lemma~A3 of \citet{shorack1972functions}  for each $\epsilon > 0$ there 
exists $\widetilde{L} \in (0,1)$ such that 
\[
 \Pr\big(\forall_{k \in \{1,\dotsc,n\}}: 
 \widetilde{U}_{jk} \geq \widetilde{L}\, U_{jk} \big) \geq 1 - \epsilon/2.  
\]
Thus one can take $L_{\epsilon} = \widetilde{L}/2$  provided we show that 
\[
 \Pr\big(\forall_{k \in \mathrm{J}_{jn}^{X}} : 
 |R_{jk}| \leq \tfrac{\widetilde{L}\,U_{jk}}{2} \big) \geq 1 - \epsilon/2.  
\]
%
To do that one can consider each of the summands on the right-hand side 
of~\eqref{eq: Rij} separately. Thus for instance one has that uniformly 
in $k \in \mathrm{J}_{jn}^{X}$ 
\begin{align*}
 \Big|\tfrac{f_{j\eps}(\eps_{jk})(\widehat{\eps}_{jk} - \eps_{jk}) }{U_{jk}}\Big|  
 &\leq U_{jk}^{\beta-1} M_{j}(\Xb_{k}) O_{P}\big(\tfrac{1}{\sqrt{n}}\big) 
  \leq n^{(1-\beta)/\lambda}\, a_{n}\,O_{P}\big(\tfrac{1}{\sqrt{n}}\big)  
\\
   &= O_{P}\big(n^{(1-\beta)/\lambda + 1/(\lambda_{x} r) - 1/2} \big)
   = o_{P}(1),
\end{align*}
as $\lambda_{x}$ satisfies~\eqref{eq: lambdax lower bound}. 
The other summands on the right-hand side 
of~\eqref{eq: Rij} can be handled analogously. 
\end{proof}

\subsection*{Some results useful when 
\texorpdfstring{$(\mathbf{F}_{j\eps})$ holds 
with $\beta = 0$}{Fjeps holds with beta = 0}}

\begin{lemma} \label{lemma about hatFjhateps second}
Suppose that assumptions $(\mathbf{F}_{j\eps})$ and $\boldsymbol{(ms)}$ 
hold. Then for each $j \in \{1,\dotsc,d\}$ 
\begin{equation} \label{eq: Fjhateps at Fjeps at sqrt rate}
 \sup_{u \in (0,1)} \sqrt{n}\,\big|\hatF_{j\hateps}\big(F_{j\eps}^{-1}(u)\big) 
 - \hatF_{j\eps}\big(F_{j\eps}^{-1}(u)\big)  \big| = O_{P}(1). 
\end{equation}
%
\end{lemma}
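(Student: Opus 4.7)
The plan is to adapt the proof of Lemma~\ref{lemma about hatFjhateps} to the case $\beta = 0$, dropping the boundary weight function~$w$: since we only need the uniform rate $O_{P}(1)$ over the full interval $(0,1)$ (rather than an $o_{P}(1)$ refinement on a shrinking subinterval), no weighting is required and the argument simplifies accordingly. As in the earlier proof, I would first rewrite
\[
\hatF_{j\hateps}\big(F_{j\eps}^{-1}(u)\big) - \hatF_{j\eps}\big(F_{j\eps}^{-1}(u)\big) = \tfrac{1}{n}\suman \big[\ind\{U_{ji}\leq F_{j\eps}(y_{j\Xb_{i}}(\widehat{\thetab}_{j},u))\} - \ind\{U_{ji}\leq u\}\big]
\]
with $y_{j\xb}(\tb,u)$ as in~\eqref{eq: yjx}, and then apply Markov's inequality to $M_{j}^{r}$ to restrict to the event $\{M_{j}(\Xb_{i})\leq a_{n}\}$ with $a_{n} = n^{1/(\lambda_{x}r)}$ for some $\lambda_{x} \in (1,2)$, incurring a uniform error of order $o_{P}(1/\sqrt n)$.

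Next, I would set up the empirical process indexed by $(\tb,u)\in T_{1}\times(0,1)$, where $T_{1} = \{\tb:\|\tb\|\leq 1\}$ and $\tbn = \thetab_{j}+\tb/\sqrt n$:
\[
Z_{ni}(\tb,u) = \tfrac{1}{\sqrt n}\big[\ind\{U_{ji}\leq F_{j\eps}(y_{j\Xb_{i}}(\tbn,u))\} - \ind\{U_{ji}\leq u\}\big]\ind\{M_{j}(\Xb_{i})\leq a_{n}\}.
\]
By $\sqrt n$-consistency of $\widehat{\thetab}_{j}$, the random parameter $\widehat{\boldsymbol{\vartheta}}_{n} = \sqrt n\,(\widehat{\thetab}_{j}-\thetab_{j})$ lies in $T_{1}$ with probability tending to one. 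I would then verify the three assumptions of Theorem~2.11.11 of \cite{vaart_wellner} for this class, equipped with the Gaussian-dominated semimetric $\rho((\tb_{1},u_{1}),(\tb_{2},u_{2})) = K\sqrt{\|\tb_{1}-\tb_{2}\|+|u_{1}-u_{2}|}$ for a sufficiently large constant~$K$, mimicking the computations of the earlier proof but with $w\equiv 1$. The key $L^{2}$-modulus bound reduces via the mean value theorem to uniform control of $f_{j\eps}(y^{*})(1+|F_{j\eps}^{-1}(u)|)$, which is exactly what $(\mathbf{F}_{j\eps})$ provides even when $\beta=0$; the envelope and Lindeberg-type conditions are immediate since the summands are bounded indicators. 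This yields asymptotic tightness, and hence
\[
\sup_{u\in(0,1)}\Big|\suman Z_{ni}(\widehat{\boldsymbol{\vartheta}}_{n},u) - \sqrt n\,\E_{U,\Xb}\big[Z_{n1}(\widehat{\boldsymbol{\vartheta}}_{n},u)\big]\Big| = O_{P}(1).
\]

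Finally, the deterministic bias $\sqrt n\,\E_{U,\Xb}[Z_{n1}(\widehat{\boldsymbol{\vartheta}}_{n},u)] = \sqrt n\,\E_{\Xb}\{[F_{j\eps}(y_{j\Xb}(\widehat{\thetab}_{j},u)) - u]\ind\{M_{j}(\Xb)\leq a_{n}\}\}$ is handled exactly as in the earlier proof by a first-order Taylor expansion about $\thetab_{j}$: it is $O_{P}(1)$ uniformly in $u$ because $\sqrt n(\widehat{\thetab}_{j}-\thetab_{j}) = O_{P}(1)$, assumption~$\boldsymbol{(ms)}$ gives integrability of $m'_{j}/s_{j}$ and $s'_{j}/s_{j}$, and $(\mathbf{F}_{j\eps})$ yields the crucial bound $\sup_{u}f_{j\eps}(F_{j\eps}^{-1}(u))(1+|F_{j\eps}^{-1}(u)|)<\infty$. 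The main obstacle is verifying the second-moment bound uniformly over the full range $u\in(0,1)$ when the support of $\eps_{j}$ is unbounded: a scale perturbation in $y_{j\xb}(\tb,u)$ introduces a term of magnitude $|F_{j\eps}^{-1}(u)|/\sqrt n$ that diverges at the boundary, and it is absorbed precisely by the factor $f_{j\eps}(F_{j\eps}^{-1}(u))$ thanks to~$(\mathbf{F}_{j\eps})$.
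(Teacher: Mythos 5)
Your overall architecture (restrict to $\{M_j(\Xb_i)\le a_n\}$, split into a centered empirical-process fluctuation plus a drift term, control the drift by the mean value theorem and the bound $\sup_u f_{j\eps}(F_{j\eps}^{-1}(u))(1+|F_{j\eps}^{-1}(u)|)<\infty$) is sound, and the drift analysis matches the paper's. But there is a genuine gap in the equicontinuity step. Condition (II) of Theorem~2.11.11 requires the \emph{pointwise} domination $n\,\E\big(Z_{n1}(\tb_2,u_2)-Z_{n1}(\tb_1,u_1)\big)^2\le\rho^2$, and with your unweighted semimetric $K\sqrt{\|\tb_1-\tb_2\|+|u_1-u_2|}$ this fails near the boundary of $(0,1)$. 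Take $\tb_1=\tb_2=\tb\neq\boldsymbol{0}$ and $u_1<u_2$ both small; then the left-hand side equals $\E_{\Xb}\big[|F_{j\eps}(y_{j\Xb}(\tbn,u_2))-F_{j\eps}(y_{j\Xb}(\tbn,u_1))|\ind\{M_j(\Xb)\le a_n\}\big]$, and bounding this by $O(|u_1-u_2|)$ requires the density-ratio bound $f_{j\eps}(y_{j\xb}(\tbn,\tilde u))/f_{j\eps}(F_{j\eps}^{-1}(\tilde u))=O(1)$. That bound is obtained in the paper only via the sandwich $F_{j\eps}^{-1}(\tilde u/2)\le y_{j\xb}(\tbn,\tilde u)\le F_{j\eps}^{-1}(2\tilde u)$ of Lemma~\ref{lemma about yjx} together with the doubling conditions in $(\mathbf{F}_{j\eps})$, and Lemma~\ref{lemma about yjx} is valid only for $\tilde u\ge\delta_n/2$ (its proof needs $u^{\beta-1}\le O(n^{(1-\beta)/\lambda})$). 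For $u$ below $\delta_n$ the perturbation of the quantile can overshoot $F_{j\eps}^{-1}(2u)$, and the ratio is genuinely unbounded (e.g.\ a scale contraction of a normal gives $F_{j\eps}(y_{j\xb}(\tbn,u))\asymp u^{1-cM_j(\xb)/\sqrt n}$, so the increment over $[u,2u]$ exceeds any constant multiple of $u$ once $\log(1/u)\gg\sqrt n/M_j(\xb)$). This is exactly why Lemma~\ref{lemma about hatFjhateps} clips the index at $u^{(n)}=\max\{u,\delta_n/2\}$; dropping the clipping to cover all of $(0,1)$, as your statement requires, is not a cosmetic change, and no separate argument for $u<\delta_n/2$ is offered.

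The paper avoids this entirely by a different device: it treats $\mathcal F=\{f_{\tb,u}:\tb\in U(\thetab_j),\,u\in(0,1)\}$ as a \emph{fixed} $P$-Donsker class (Donskerness argued as in Theorem~4 of \cite{ogv_sjs_2015}) and invokes asymptotic equicontinuity with respect to the intrinsic $L_2(P)$ semimetric. Then one only needs $\sup_{u}\var_{U,\Xb}(f_{\hatthetab_j,u}-f_{\thetab_j,u})=o_P(1)$, i.e.\ a comparison at the \emph{same} $u$, which follows from the mean value theorem in $\tb$ alone and never requires a $u$-direction Lipschitz estimate. If you want to salvage your route you would either have to switch to this fixed-class argument, or clip at $\delta_n$ and supply a separate bound for $\sup_{u<\delta_n/2}\sqrt n\,|\hatF_{j\hateps}(F_{j\eps}^{-1}(u))-\hatF_{j\eps}(F_{j\eps}^{-1}(u))|$, which is not immediate since $\sqrt n\,\hatF_{j\eps}(F_{j\eps}^{-1}(\delta_n))$ itself diverges. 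A further, minor, point: $\sqrt n\,(\widehat{\thetab}_j-\thetab_j)=O_P(1)$ does not put $\widehat{\boldsymbol{\vartheta}}_n$ in the unit ball with probability tending to one; you need either to index by $\{\|\tb\|\le C\}$ for arbitrary $C$ and let $C\to\infty$, or to localize at the slower rate $n^{1/2-\eta}$ as in the proof of Lemma~\ref{lemma about hatFjhateps}.
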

\begin{proof}
 Let $U(\thetab_j)$ be the neighborhood of $\thetab_{j}$ introduced 
in~$\boldsymbol{(ms)}$. Now consider the set of functions 
\begin{align}
 \notag  
 \mathcal{F} &= \Big\{(\xb,e) \mapsto 
 \ind\big\{e \leq \tfrac{m_{j}(\xb;\tb) - m_{j}(\xb;\thetab_j)}{s_{j}(\xb;\thetab_j)} + F_{j\eps}^{-1}(u) \tfrac{s_{j}(\xb;\tb)}{s_{j}(\xb;\thetab_j)}
\big\};\,  
 u \in (0,1), 
 \tb \in U(\thetab_j)   
 \Big\}  
\end{align}
and denote its elements as $f_{\tb, u}$. Then one can write 
\[
 \hatF_{j\hateps}\big(F_{j\eps}^{-1}(u)\big) 
 = \frac{1}{n} \suman f_{\hatthetab_{j},u}(\Xb_{i},\eps_{ji}) 
 \stackrel{Say}{=} \mathsf{P}_{n} \big(f_{\hatthetab_{j},u}\big). 
\]
Similarly as in the proof of Theorem~4 of \cite{ogv_sjs_2015} one can 
argue that the set~$\mathcal{F}$ is $P$-Donsker. Further 
similarly as in the proof of Lemma~\ref{lemma about hatFjhateps} 
one can show that  
\[
 \sup_{u \in (0,1)}\var_{U,\Xb}\big(f_{\hatthetab_{j},u} - f_{\thetab_{j},u}\big) 
  \leq \EX \big|F_{j\eps}(y_{j\Xb}(\hatthetab_j, u)) - F_{j\eps}(y_{j\Xb}(\thetab_j, u))\big|
 =  o_{P}(1), 
\]
which further implies that uniformly in $u \in (0,1)$
\begin{equation} \label{eq: hafFjhateps thanks to asympt tightness}
 \sqrt{n}\big[\hatF_{j\hateps}\big(F_{j\eps}^{-1}(u)\big) 
 - \hatF_{j\eps}\big(F_{j\eps}^{-1}(u)\big) \big]
 = \sqrt{n}\big[\mathsf{P} \big(f_{\hatthetab_{j},u}\big) 
 - \mathsf{P} \big(f_{\thetab_{j},u}\big)\big] 
 + o_{P}\big(\tfrac{1}{\sqrt{n}}\big).   
\end{equation}
Now by the mean value theorem there exists $\tb_{*}$ between 
$\hatthetab_{j}$ and $\thetab_{j}$ such that 
\begin{align*}
\notag   
 \sup_{u \in (0,1)}& \Big|\sqrt{n}\big[\mathsf{P} \big(f_{\hatthetab_{j},u}\big) 
 - \mathsf{P} \big(f_{\thetab_{j},u}\big)\big]\Big|
\\ 
 & = \sqrt{n}\,\sup_{u \in (0,1)}\EX\big[\big|F_{j\eps}(y_{j\Xb}\big(\hatthetab_{j},u)\big) 
 -  F_{j\eps}(y_{j\Xb}\big(\thetab_{j},u)\big) \big| 
\\
\notag 
& \leq \sqrt{n}\,\sup_{u \in (0,1)}\EX\,\Big[ f_{j\eps}(y_{j\Xb}(\tb_{*},u)) 
\Big(M_{j}(\Xb)  
  + \big|F_{j\eps}^{-1}(u)
 \big|M_{j}(\Xb)\Big)\Big]
\|\hatthetab_{j} - \thetab_{j}\| 
\\
\notag   
& \leq 
\sqrt{n}\,\|\hatthetab_{j} - \thetab_{j}\|\,  
\sup_{u \in (0,1)} 
\EX \Big[f_{j\eps}(y_{j\Xb}(\tb^{*},u))(1 + |y_{j\Xb}(\tb^{*},u)|) 
\tfrac{M_{j}(\Xb) 
(1 + |F_{j\eps}^{-1}(u)|)
}{1 + |y_{j\Xb}(\tb^{*},u)|}\Big] 
\\
\label{eq: diff of Fjeps at different tb} 
& \leq O_{P}(1)\, \E\, M_{j}(\Xb)\, 
 = O_{P}(1)\,O(1) = O_{P}(1),
\end{align*}
%
%
which together with \eqref{eq: hafFjhateps thanks to asympt tightness} 
implies \eqref{eq: Fjhateps at Fjeps at sqrt rate}. 
\end{proof}

\begin{lemma} \label{lemma Ujhat minus Ujtilde second}
Suppose that the assumptions of Lemma~\ref{lemma about hatFjhateps second} are satisfied. 
Then for each $j \in \{1,\dotsc,d\}$ 
\begin{equation*}
\max_{k \in \{1,\dotsc,n\}} 
\Big|\frac{\widehat{U}_{jk} - \widetilde{U}_{jk}}{1+M_{j}(\Xb_{k})}\Big| = 
O_{P}\big(\tfrac{1}{\sqrt{n}}\big).  
\end{equation*}
%
%
\end{lemma}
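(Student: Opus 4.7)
The plan is to start from
\[
\widehat{U}_{jk} - \widetilde{U}_{jk} = \tfrac{n}{n+1}\big[\hatF_{j\hateps}(\hateps_{jk}) - \hatF_{j\eps}(\eps_{jk})\big]
\]
and telescope via the intermediate quantities $\hatF_{j\eps}(\hateps_{jk})$ and $F_{j\eps}(\hateps_{jk})$, decomposing the bracket as $A_k+B_k+D_k$ with
\begin{align*}
A_k &= \hatF_{j\hateps}(\hateps_{jk}) - \hatF_{j\eps}(\hateps_{jk}),\\
B_k &= (\hatF_{j\eps}-F_{j\eps})(\hateps_{jk}) - (\hatF_{j\eps}-F_{j\eps})(\eps_{jk}),\\
D_k &= F_{j\eps}(\hateps_{jk}) - F_{j\eps}(\eps_{jk}).
\end{align*}
The strategy is to confine the entire dependence on $M_j(\Xb_k)$ to the deterministic term $D_k$, while showing that $A_k$ and $B_k$ are $O_P(1/\sqrt n)$ \emph{uniformly in $k$} with no $M_j(\Xb_k)$ factor at all.

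For $A_k$ I would apply Lemma~\ref{lemma about hatFjhateps second} with the substitution $u=F_{j\eps}(\hateps_{jk})$ to obtain $\max_k|A_k|=O_P(1/\sqrt n)$. For $B_k$ each of the two summands is bounded by the classical Kolmogorov--Smirnov distance $\sup_x|\hatF_{j\eps}(x)-F_{j\eps}(x)|=O_P(1/\sqrt n)$, so $\max_k|B_k|=O_P(1/\sqrt n)$. Since $1+M_j(\Xb_k)\geq 1$, dividing either bound by that factor preserves the rate.

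For $D_k$ I would use the representation
\[
\hateps_{jk} - \eps_{jk} = \tfrac{m_j(\Xb_k;\thetab_j) - m_j(\Xb_k;\hatthetab_j)}{s_j(\Xb_k;\hatthetab_j)} + \eps_{jk}\,\tfrac{s_j(\Xb_k;\thetab_j) - s_j(\Xb_k;\hatthetab_j)}{s_j(\Xb_k;\hatthetab_j)}
\]
together with the mean value theorem and assumption $\boldsymbol{(ms)}$ to deduce, on the high-probability event $\hatthetab_j\in U(\thetab_j)$, an inequality of the form $|\hateps_{jk}-\eps_{jk}|\leq C\,M_j(\Xb_k)(1+|\eps_{jk}|)\|\hatthetab_j-\thetab_j\|$ for some finite~$C$. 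Assumption $(\mathbf{F}_{j\eps})$ yields $\sup_x f_{j\eps}(x)(1+|x|)\leq K<\infty$. I would then split according to whether $\delta_k:=C\,M_j(\Xb_k)\,\|\hatthetab_j-\thetab_j\|\leq 1/2$ or $>1/2$. In the first case $1+|\xi|\geq \tfrac12(1+|\eps_{jk}|)$ for every $\xi$ between $\eps_{jk}$ and $\hateps_{jk}$, so integrating $f_{j\eps}(\xi)\leq K/(1+|\xi|)$ gives $|D_k|\leq 2KC\,M_j(\Xb_k)\|\hatthetab_j-\thetab_j\|=O_P(M_j(\Xb_k)/\sqrt n)$, whence $|D_k|/(1+M_j(\Xb_k))=O_P(1/\sqrt n)$. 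In the second case $\delta_k>1/2$ forces $1+M_j(\Xb_k)\geq 1/(2C\|\hatthetab_j-\thetab_j\|)$, which is of order $\sqrt n$ in probability, so together with the trivial estimate $|D_k|\leq 1$ one again has $|D_k|/(1+M_j(\Xb_k))=O_P(1/\sqrt n)$.

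The main obstacle is exactly this deterministic term in the regime where $M_j(\Xb_k)$ is so large that the mean value estimate becomes vacuous; the key observation is that this regime only occurs when $M_j(\Xb_k)$ itself is already of order $\sqrt n$, so the normalising factor $1+M_j(\Xb_k)$ absorbs the worst-case bound. Assembling $A_k$, $B_k$, $D_k$ and using $n/(n+1)\leq 1$ delivers the claimed rate uniformly in $k\in\{1,\dotsc,n\}$.
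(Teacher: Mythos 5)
Your decomposition is correct and the conclusion follows, but you reach it by a genuinely different route from the paper for half of the argument. The first step is the same: both you and the paper substitute $u=F_{j\eps}(\hateps_{jk})$ into \eqref{eq: Fjhateps at Fjeps at sqrt rate} to control $A_k=\hatF_{j\hateps}(\hateps_{jk})-\hatF_{j\eps}(\hateps_{jk})$. For the remaining piece $\hatF_{j\eps}(\hateps_{jk})-\hatF_{j\eps}(\eps_{jk})$, the paper says to ``follow the proof of Lemma~\ref{lemma Ujhat minus Ujtilde}'', i.e.\ to run the weighted empirical-process oscillation argument (Theorem~2.11.11 of van der Vaart and Wellner) that yields the expansion $\hatF_{j\eps}(\hateps_{jk})-\hatF_{j\eps}(\eps_{jk})=F_{j\eps}(\hateps_{jk})-F_{j\eps}(\eps_{jk})+o_P(n^{-1/2})$ with an identified leading term. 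You instead split this piece into $B_k+D_k$ and bound $B_k$ by twice the Kolmogorov--Smirnov distance $\sup_x|\hatF_{j\eps}(x)-F_{j\eps}(x)|=O_P(n^{-1/2})$, which is legitimate here precisely because the lemma only asserts a rate $O_P(n^{-1/2})$ and not an asymptotic representation; this is a real simplification that avoids the empirical-process machinery entirely. Your treatment of $D_k$ --- the mean value bound $f_{j\eps}(\xi)\le K/(1+|\xi|)$ combined with the dichotomy on whether $M_j(\Xb_k)\|\hatthetab_j-\thetab_j\|$ exceeds a fixed threshold, so that either the increment is genuinely small or the normaliser $1+M_j(\Xb_k)$ is itself of order $\sqrt{n}$ --- is exactly the device needed to handle the unbounded majorant, and it is cleaner than the paper's restriction to the index set where $M_j(\Xb_k)\le a_n$. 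Two small points of care: the constant $C$ in your bound on $|\hateps_{jk}-\eps_{jk}|$ should be fixed \emph{after} the case split (in the regime $M_j(\Xb_k)\|\hatthetab_j-\thetab_j\|\le c_0$ the ratio $s_j(\Xb_k;\tb_*)/s_j(\Xb_k;\hatthetab_j)$ is within $e^{c_0}$ of one, which is what justifies the constant; stating $C$ first is slightly circular), and the substitution $u=F_{j\eps}(\hateps_{jk})$ into Lemma~\ref{lemma about hatFjhateps second} needs a word when $\hateps_{jk}$ falls outside the support of $\eps_j$ (so that $F_{j\eps}(\hateps_{jk})\in\{0,1\}$ or $F_{j\eps}^{-1}\circ F_{j\eps}(\hateps_{jk})\ne\hateps_{jk}$); a monotonicity/sandwiching argument closes this, and the paper's own proof is silent on the same point, so neither issue is a substantive gap.
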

\begin{proof}
 The proof follows by substitution of 
$u = F_{j\eps}(\hateps_{jk})$  into \eqref{eq: Fjhateps at Fjeps at sqrt rate} 
and following the proof of Lemma~\ref{lemma Ujhat minus Ujtilde}. 
%
\end{proof}

\renewcommand{\theequation}{C\arabic{equation}}
\setcounter{equation}{0}

\section{Further auxiliary results}
\begin{lemma} \label{lemma about yjx}
Suppose that assumption $(\mathbf{F}_{j\eps})$ holds.   
Let $\lambda$ satisfy $\lambda > 2(1 - \beta + \tfrac{1}{r-1})$   
and $\lambda_{x}$ satisfies~\eqref{eq: lambdax lower bound}. 
Further for $\eta > 0$ 
introduce $b_{n} = n^{\frac{1}{\lambda_{x} r} - \frac{1}{2} - \eta}$. 
Then there exists $\eta > 0$ such that for all sufficiently large~$n$ 
for all  $u \in [\tfrac{\delta_n}{2}, \tfrac{1}{2}]$ for each $j \in \{1,\dotsc,d\}$ 
\[
  F_{j\eps}^{-1}(\tfrac{u}{2}) \leq  b_n +  \big[1+ b_n\sign\big(F_{j\eps}^{-1}(u)\big)\big]F_{j\eps}^{-1}(u)
 \leq F_{j\eps}^{-1}(2u), 
\]
%

and
\[
 F_{j\eps}^{-1}(1- \tfrac{u}{2}) \geq  -b_{n} + \big[1 - b_{n}\sign\big(F_{j\eps}^{-1}(1-u)\big)\big] F_{j\eps}^{-1}(1-u) \geq F_{j\eps}^{-1}(1-2u).  
\]
%
\end{lemma}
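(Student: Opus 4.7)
The plan is to rewrite the middle quantities in a more transparent form, dispose of the trivial halves of the two double inequalities by monotonicity, and then reduce the remaining one-sided estimate to a density bound via the mean value theorem.

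First I would use the identity $[1\pm b_n\sign(F_{j\eps}^{-1}(u))]F_{j\eps}^{-1}(u)=F_{j\eps}^{-1}(u)\pm b_n|F_{j\eps}^{-1}(u)|$ to rewrite the middles of the two displays as $F_{j\eps}^{-1}(u)+b_n(1+|F_{j\eps}^{-1}(u)|)$ and $F_{j\eps}^{-1}(1-u)-b_n(1+|F_{j\eps}^{-1}(1-u)|)$, respectively. Since $b_n>0$ and $1+|F_{j\eps}^{-1}(\cdot)|>0$, monotonicity of $F_{j\eps}^{-1}$ immediately yields $F_{j\eps}^{-1}(u/2)\leq F_{j\eps}^{-1}(u)\leq{}$middle for the first display and middle${}\leq F_{j\eps}^{-1}(1-u)\leq F_{j\eps}^{-1}(1-u/2)$ for the second. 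The nontrivial half of the second display reduces to that of the first by applying the first to the random variable $-\eps_j$, whose density still satisfies $(\mathbf{F}_{j\eps})$ with the same $\beta$. So the real task is to show, for every $u\in[\delta_n/2,1/2]$,
\[
 b_n\bigl(1+|F_{j\eps}^{-1}(u)|\bigr)\;\leq\;\Delta_u\;:=\;F_{j\eps}^{-1}(2u)-F_{j\eps}^{-1}(u).
\]

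Writing $\Delta_u=\int_u^{2u}[f_{j\eps}(F_{j\eps}^{-1}(v))]^{-1}dv$ and applying assumption~$(\mathbf{F}_{j\eps})$ with the constant $C$ from~\eqref{eq: fjeps at quantile} (together with $(1-v)^\beta\leq 1$ for $v\leq 1$), I obtain
\[
 \Delta_u\;\geq\;\frac{1}{C(2u)^\beta}\int_u^{2u}\bigl(1+|F_{j\eps}^{-1}(v)|\bigr)\,dv.
\]
I would then split into three cases according to the sign pattern of $F_{j\eps}^{-1}$ on $[u,2u]$. Case~A ($F_{j\eps}^{-1}(u)\geq 0$): $1+|F_{j\eps}^{-1}(v)|\geq 1+|F_{j\eps}^{-1}(u)|$ there, giving $\Delta_u\geq u^{1-\beta}(C2^\beta)^{-1}(1+|F_{j\eps}^{-1}(u)|)$. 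Case~C ($F_{j\eps}^{-1}(2u)\leq 0$): $1+|F_{j\eps}^{-1}(v)|$ is non-increasing on $[u,2u]$, so $\Delta_u\geq u^{1-\beta}(C2^\beta)^{-1}(1+|F_{j\eps}^{-1}(2u)|)$; combined with the identity $|F_{j\eps}^{-1}(u)|=|F_{j\eps}^{-1}(2u)|+\Delta_u$ (valid because $F_{j\eps}^{-1}$ has constant non-positive sign on $[u,2u]$), the goal becomes $\Delta_u(1-b_n)\geq b_n(1+|F_{j\eps}^{-1}(2u)|)$. Case~B (sign change): $|F_{j\eps}^{-1}(u)|\leq\Delta_u$ directly, so it suffices to show $\Delta_u(1-b_n)\geq b_n$, which follows from the trivial lower bound $\Delta_u\geq u^{1-\beta}/(C2^\beta)$ coming from $1+|F_{j\eps}^{-1}(v)|\geq 1$.

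In each case the requirement collapses to $b_n\leq c_\beta\,u^{1-\beta}$ for a constant $c_\beta>0$ depending on $\beta$ and $C$. At the worst point $u=\delta_n/2=n^{-1/\lambda}/2$ this becomes $n^{1/(\lambda_x r)-1/2-\eta}\leq c'n^{-(1-\beta)/\lambda}$, which by condition~\eqref{eq: lambdax lower bound} (forcing $1/(\lambda_x r)+(1-\beta)/\lambda-1/2<0$) is satisfied for any fixed $\eta>0$ once $n$ is sufficiently large. The main obstacle is Case~C, where the natural lower bound on $\Delta_u$ features $1+|F_{j\eps}^{-1}(2u)|$ rather than $1+|F_{j\eps}^{-1}(u)|$; the clean way around this is to exploit sign-constancy of $F_{j\eps}^{-1}$ on $[u,2u]$ via the identity $|F_{j\eps}^{-1}(u)|=|F_{j\eps}^{-1}(2u)|+\Delta_u$, which allows the unwanted $b_n\Delta_u$ term to be absorbed into the left-hand side.
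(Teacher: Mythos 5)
Your proposal is correct and follows essentially the same route as the paper: both reduce the claim to $b_n\big(1+|F_{j\eps}^{-1}(u)|\big)\le F_{j\eps}^{-1}(2u)-F_{j\eps}^{-1}(u)$, lower-bound this quantile increment via the quantile density and \eqref{eq: fjeps at quantile} to arrive at the requirement $b_n\lesssim u^{1-\beta}$, and close with the exponent comparison at $u=\delta_n/2$ using \eqref{eq: lambdax lower bound}. The only difference is cosmetic: the paper applies the mean value theorem once and invokes assumption $(\mathbf{F}_{j\eps})$ with the density evaluated at an intermediate point $\tilde u\in[u,2u]$ but $|F_{j\eps}^{-1}|$ at $u$ (implicitly using the ratio/monotonicity parts of the assumption to reconcile the mismatch), whereas your integral representation plus the three-way sign analysis pairs $f_{j\eps}(F_{j\eps}^{-1}(v))$ with $|F_{j\eps}^{-1}(v)|$ at the same argument and so needs only the bound \eqref{eq: fjeps at quantile} itself.
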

\begin{proof}
 We show only that 
\[
b_n +  \big[1+ b_n\sign\big(F_{j\eps}^{-1}(u)\big)\big]F_{j\eps}^{-1}(u)
 \leq F_{j\eps}^{-1}(2u), 
\]
as the remaining inequalities could be proved analogously. Thus we need to show that 
\begin{equation} \label{eq: bound on perturbed quantile}
 b_{n} + 
 b_{n}\,|F_{j\eps}^{-1}(u)|
 \leq F_{j\eps}^{-1}(2u) - F_{j\eps}^{-1}(u). 
\end{equation}
Now by the mean value theorem
\[
 F_{j\eps}^{-1}(2u) - F_{j\eps}^{-1}(u)  
 = \frac{u}{f_{j\eps}\big(F_{j\eps}^{-1}(\tilde{u})\big)},  
\] 
where $\tilde{u}$ is between $u$ and $2u$. Thus with the help of
\eqref{eq: bound on perturbed quantile} it remains to show that 
\[
 \frac{f_{j\eps}\big(F_{j\eps}^{-1}(\tilde{u})\big)
 (1+|F_{j\eps}^{-1}(u)|)}{u} \leq \frac{1}{b_{n}} = n^{\frac{1}{2} - \frac{1}{\lambda_{x} r} - \eta}. 
\] 
Now by assumption $(\mathbf{F}_{j\eps})$ and using the fact that $u \geq \delta_{n}$
\[
 \frac{f_{j\eps}\big(F_{j\eps}^{-1}(\tilde{u})\big)
 (1+|F_{j\eps}^{-1}(u)|)}{u} = O(u^{\beta-1}) 
 \leq O(n^{\frac{1-\beta}{\lambda}}) = o(n^{\frac{1}{2} - \frac{1}{\lambda_{x} r}-\eta}),  
\]
where we have used that $\lambda_{x}$ satisfies~\eqref{eq: lambdax lower bound}, 
which guarantees that one can find  $\eta > 0$ sufficiently small 
so that 
$\frac{1}{2} - \frac{1}{\lambda_{x} r}-\eta > \frac{1-\beta}{\lambda}$ holds. 
%

\end{proof}

\begin{lemma} \label{lemma about cont of fjeps} 
Suppose that the assumptions of Lemma~\ref{lemma about yjx} are satisfied. 
Then there exists  $\eta > 0$ such that for all $\gamma > 0$ 
for each $j \in \{1,\dotsc,d\}$ 
\begin{equation*} 
 \sup_{s_1,s_2 \in \{-1,1\}}\sup_{u \in [\frac{\delta_n}{2},1-\frac{\delta_n}{2}]} 
 \bigg|\frac{f_{j\eps}\big(s_{1}b_{n} + (1+s_{2}b_{n})F_{j\eps}^{-1}(u)\big) 
  - f_{j\eps}\big(F_{j\eps}^{-1}(u)\big)}
 {u^{(\beta-\gamma)_{+}}(1-u)^{(\beta-\gamma)_{+}}}\bigg| = o(1) 
\end{equation*}
and also 
\begin{equation*} 
 \sup_{s_1,s_2 \in \{-1,1\}}\sup_{u \in [\frac{\delta_n}{2},1-\frac{\delta_n}{2}]} 
 \bigg|\frac{\big[f_{j\eps}\big(s_{1}b_{n} + (1+s_{2}b_{n})F_{j\eps}^{-1}(u)\big) 
  - f_{j\eps}\big(F_{j\eps}^{-1}(u)\big)\big]\,F_{j\eps}^{-1}(u)}
 {u^{(\beta-\gamma)_{+}}(1-u)^{(\beta-\gamma)_{+}}} 
 \bigg| = o(1)  
\end{equation*}
as $n \to \infty$. 
\end{lemma}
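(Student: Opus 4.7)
The starting observation is Lemma~\ref{lemma about yjx}: for $u \in [\delta_n/2, 1/2]$ the perturbed argument $z_n(u) := s_1 b_n + (1 + s_2 b_n) F_{j\eps}^{-1}(u)$ satisfies $z_n(u) \in [F_{j\eps}^{-1}(u/2), F_{j\eps}^{-1}(2u)]$ uniformly in $s_1, s_2 \in \{-1,1\}$, with the symmetric inclusion on $[1/2, 1-\delta_n/2]$. Thus $F_{j\eps}(z_n(u))$ never leaves the ratio-$2$ window around $u$ (respectively around $1-u$), which is the lever on which the whole proof hangs.

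I would fix $\varepsilon > 0$ and split the range of $u$ into a compact middle piece $[u_L, 1-u_L]$ and two tails $[\delta_n/2, u_L]$, $[1-u_L, 1-\delta_n/2]$, for a small $u_L \in (0, \min\{u_1, 1-u_2\})$ chosen last in dependence on $\varepsilon$. On the middle piece $F_{j\eps}^{-1}(u)$ is bounded, so $z_n(u) - F_{j\eps}^{-1}(u) = O(b_n)$ uniformly, and the uniform continuity of $f_{j\eps}$ on a compact subset of its support makes the numerator $o(1)$; the denominator is bounded below by a positive constant, and in the second display the extra factor $|F_{j\eps}^{-1}(u)|$ is likewise bounded.

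On the left tail I would invoke the monotonicity clause of $(\mathbf{F}_{j\eps})$: for $u_L \leq u_1$ the function $f_{j\eps}(F_{j\eps}^{-1}(\cdot))$ is non-decreasing on $(0, 2u_L]$, so Lemma~\ref{lemma about yjx} squeezes $f_{j\eps}(z_n(u))$ between $f_{j\eps}(F_{j\eps}^{-1}(u/2))$ and $f_{j\eps}(F_{j\eps}^{-1}(2u))$. The ratio-$2$ clause of $(\mathbf{F}_{j\eps})$ then gives $f_{j\eps}(F_{j\eps}^{-1}(2u)) \leq M f_{j\eps}(F_{j\eps}^{-1}(u))$, and the principal bound $f_{j\eps}(F_{j\eps}^{-1}(u))(1+|F_{j\eps}^{-1}(u)|) \leq C u^{\beta}(1-u)^{\beta}$ yields
\[
 \frac{|f_{j\eps}(z_n(u)) - f_{j\eps}(F_{j\eps}^{-1}(u))|}{u^{(\beta-\gamma)_+}(1-u)^{(\beta-\gamma)_+}} \leq \frac{C' u^{\beta - (\beta-\gamma)_+}}{1 + |F_{j\eps}^{-1}(u/2)|}.
\]
For $\beta > 0$ and $0 < \gamma < \beta$ this is $O(u^\gamma) \leq O(u_L^\gamma) \to 0$ as $u_L \to 0$; for $\beta = 0$ the exponent equals $0$ and the right-hand side tends to $0$ as $u \to 0$ either by Remark~\ref{remark about beta equal to zero} (when $F_{j\eps}^{-1}(0_+) = -\infty$) or, for bounded support, by direct continuity of $f_{j\eps}$ at the endpoint. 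The second display is handled in exactly the same way, absorbing the factor $|F_{j\eps}^{-1}(u)|$ into the sharper principal bound $f_{j\eps}(F_{j\eps}^{-1}(u))|F_{j\eps}^{-1}(u)| \leq C u^{\beta}(1-u)^{\beta}$. The right tail is treated symmetrically using the second ratio bound and the second monotonicity statement in $(\mathbf{F}_{j\eps})$.

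The main obstacle is to make the numerator small \emph{relative to} the vanishing denominator uniformly on the tails, where $F_{j\eps}^{-1}$ may explode; this is precisely what the ratio-$2$ and monotonicity clauses of $(\mathbf{F}_{j\eps})$ were crafted to provide, since they allow one to replace a pointwise-continuity estimate on $f_{j\eps}$ by a comparison with $f_{j\eps}(F_{j\eps}^{-1}(u))$, which is then controlled by the quantile-scale bound. The borderline case $\beta = 0$ is the only point that requires separate attention and relies on Remark~\ref{remark about beta equal to zero}.
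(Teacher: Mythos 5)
Your proof is correct and follows essentially the same route as the paper's: a compact middle piece handled by uniform continuity of $f_{j\eps}$, and tails handled by combining Lemma~\ref{lemma about yjx} with the monotonicity and ratio-$2$ clauses of $(\mathbf{F}_{j\eps})$ and the quantile-scale bound, with the $\beta=0$ case resolved via Remark~\ref{remark about beta equal to zero}. The only differences are organizational (the paper dichotomizes upfront on whether $\lim_{u\to 0_+}f_{j\eps}(F_{j\eps}^{-1}(u))$ is positive and bounds the two terms of the difference separately in the tail), plus trivial slips in your write-up (you need $2u_L\leq u_1$ for the monotonicity squeeze, and the denominator in your displayed bound should be $1+|F_{j\eps}^{-1}(u)|$ rather than $1+|F_{j\eps}^{-1}(u/2)|$), neither of which affects the argument.
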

 
\begin{proof}
We will prove only that 
\begin{equation*} 
 \sup_{s_1,s_2 \in \{-1,1\}}\sup_{u \in [\frac{\delta_n}{2},\frac{1}{2}]} 
 \bigg|\frac{\big[f_{j\eps}\big(s_{1}b_{n} + (1+s_{2}b_{n})F_{j\eps}^{-1}(u)\big) 
  - f_{j\eps}\big(F_{j\eps}^{-1}(u)\big)\big]\,F_{j\eps}^{-1}(u)}
 {u^{(\beta-\gamma)_{+}}(1-u)^{(\beta-\gamma)_{+}}} 
 \bigg| = o(1)  
\end{equation*}
as the remaining cases can be shown analogously. 

\smallskip 

First suppose that $\lim_{u \to 0_{+}} f_{j\eps}\big(F_{j\eps}^{-1}(u)\big) >  0$. 
Then from Remark~\ref{remark about beta equal to zero} one can conclude that  
$\lim_{u \to 0_{+}} F_{j\eps}^{-1}(u) > -\infty$ and $\beta = 0$. 
Thus also $(\beta-\gamma)_{+} = 0$  
and the statement follows from the continuity of $f_{j\eps}$. 

\smallskip 

Now suppose that $\lim_{u \to 0_{+}} f_{j\eps}\big(F_{j\eps}^{-1}(u)\big) =  0$. 
Note that for a given $u_{U} \in (0,\frac{1}{2})$ 
\begin{equation*} 
 \sup_{s_1,s_2 \in \{-1,1\}}\sup_{u \in [\frac{u_{U}}{2},\frac{1}{2}]} 
 \bigg|\frac{\big[f_{j\eps}\big(s_{1}b_{n} + (1+s_{2}b_{n})F_{j\eps}^{-1}(u)\big) 
  - f_{j\eps}\big(F_{j\eps}^{-1}(u)\big)\big]\,F_{j\eps}^{-1}(u)}
 {u^{(\beta-\gamma)_{+}}(1-u)^{(\beta-\gamma)_{+}}} 
 \bigg| = o(1),   
\end{equation*}
which follows from the continuity of the function $f_{j\eps}$. 

Now let $\epsilon > 0$ be given and $\gamma > 0$ fixed. 
Thanks to assumption $(\mathbf{F}_{j\eps})$ 
one can choose $u_{U}$ so that 
\[
 \sup_{u \in (0, 2\,u_{U}]} 
 \bigg| \frac{f_{j\eps}\big(F_{j\eps}^{-1}(u)\big)F_{j\eps}^{-1}(u)}{u^{(\beta - \gamma)_{+}}(1-u)^{(\beta - \gamma)_{+}}}\bigg| < \frac{\epsilon}{M},   
\]
where 
$
 M = \sup_{u \in (0,1/2)}\frac{f_{j\eps}(F_{j\eps}^{-1}(2u))}{f_{j\eps}(F_{j\eps}^{-1}(u))}.  
%
$
Now thanks to Lemma~\ref{lemma about yjx} one can conclude that also 
\begin{align*} 
 \sup_{s_1,s_2 \in \{-1,1\}}\sup_{u \in [\frac{\delta_{n}}{2},u_{U}]} 
 &\bigg|\frac{f_{j\eps}\big(s_{1}b_{n} + (1+s_{2}b_{n})F_{j\eps}^{-1}(u)\big) 
  \,F_{j\eps}^{-1}(u)}
 {u^{(\beta-\gamma)_{+}}(1-u)^{(\beta-\gamma)_{+}}} 
 \bigg| 
\\ 
& 
\leq 
\sup_{u \in (0,2u_{U}]} 
 \bigg| \frac{M f_{j\eps}\big(F_{j\eps}^{-1}(u)\big)F_{j\eps}^{-1}(u)}{u^{(\beta - \gamma)_{+}}(1-u)^{(\beta - \gamma)_{+}}}\bigg| < \epsilon, 
\end{align*}
which finishes the proof of the lemma. 
\end{proof}

\begin{lemma} \label{lemma density small at infinity}
 Suppose that the density $f_{j\eps}$ satisfies assumption $(\mathbf{F}_{j\eps})$. Then 
\[
 \lim_{|x| \to \infty} |x|f_{j\eps}(x) = 0. 
\]
\end{lemma}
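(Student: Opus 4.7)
The plan is to reduce everything to the classical fact that a monotone integrable function on a half-line must have vanishing tail at rate faster than $1/x$. Assumption $(\mathbf{F}_{j\eps})$ provides exactly the monotonicity needed via its last clause.

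First I would dispose of the trivial case. If the right endpoint $x^{+} = \lim_{u\to 1^{-}} F_{j\eps}^{-1}(u)$ is finite, then $f_{j\eps}(x)=0$ for all $x>x^{+}$ and $\lim_{x\to\infty}xf_{j\eps}(x)=0$ automatically; an analogous remark handles a finite left endpoint. So it suffices to treat, say, the case $\lim_{u\to 1^{-}} F_{j\eps}^{-1}(u)=\infty$.

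By the last clause of $(\mathbf{F}_{j\eps})$, there exists $u_{2}\in(0,1)$ such that $u\mapsto f_{j\eps}\!\left(F_{j\eps}^{-1}(u)\right)$ is non-increasing on $(u_{2},1)$. Since $F_{j\eps}^{-1}$ is non-decreasing, composition implies that $f_{j\eps}$ itself is non-increasing on $(x_{0},\infty)$, where $x_{0} := F_{j\eps}^{-1}(u_{2})$. Now for any $x\geq 2x_{0}$ the monotonicity yields
\[
\int_{x/2}^{x} f_{j\eps}(t)\,dt \;\geq\; \frac{x}{2}\, f_{j\eps}(x).
\]
Because $f_{j\eps}$ is a probability density, the left-hand side is at most $1-F_{j\eps}(x/2)$, which tends to $0$ as $x\to\infty$. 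Hence $x f_{j\eps}(x)\to 0$.

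The argument for $x\to-\infty$ is symmetric: the assumption gives some $u_{1}\in(0,1)$ such that $f_{j\eps}(F_{j\eps}^{-1}(u))$ is non-decreasing on $(0,u_{1})$, whence $f_{j\eps}$ is non-decreasing on $(-\infty, F_{j\eps}^{-1}(u_{1}))$, and for $x$ sufficiently negative one applies the same sandwich $\int_{x}^{x/2} f_{j\eps}(t)\,dt \geq (|x|/2)\, f_{j\eps}(x)$. There is no real obstacle here; the only thing to watch out for is making sure that the support case with a finite endpoint is treated separately before invoking monotonicity on an unbounded interval.
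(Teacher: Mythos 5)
Your proof is correct and rests on exactly the same two ingredients as the paper's: the eventual monotonicity of $f_{j\eps}$ in the tails supplied by the last clause of $(\mathbf{F}_{j\eps})$, and the integrability of the density, with the finite-endpoint case disposed of first in both arguments. The only difference is one of presentation: you give the direct form of the classical estimate, $\tfrac{x}{2}\,f_{j\eps}(x)\le \int_{x/2}^{x}f_{j\eps}(t)\,dt\le 1-F_{j\eps}(x/2)\to 0$, whereas the paper argues by contradiction along a sequence $z_{n+1}\ge 2z_{n}$ and shows $\int f_{j\eps}$ would diverge; your version is, if anything, slightly cleaner.
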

\begin{proof}
 We will consider only $x \to \infty$. The remaining case would be handled analogously. 

First, note that one can assume that $\lim_{u \to 1_{-}} F_{j\eps}^{-1}(u) = \infty$, 
otherwise the proof is trivial. Now suppose that 
\[
 \lim_{x \to \infty} x f_{j\eps}(x) \neq 0. 
\]
Then one can find a positive constant $a$ and a sequence $\{z_{n}\}_{n=1}^{\infty}$ monotonically going 
to infinity such that 
\[
 z_{n} f(z_{n}) \geq a , \qquad \forall n \in \NN. 
\]
Note that by assumption $(\mathbf{F}_{j\eps})$ the function $f_{j\eps}(x)$  is non-increasing 
for $x > F_{j\eps}^{-1}(u_{2})$. In what follows we will assume that  $z_{1} >  F_{j\eps}^{-1}(u_{2})$ 
and that $z_{n+1} \geq 2 z_{n}$ (otherwise one can take an appropriate subsequence of $\{z_{n}\}$). 
Now one can bound 
\begin{align*}
 \int_{z_{1}}^{\infty} f_{j\eps}(x) dx 
 & = \sum_{n=1}^{\infty} \int_{z_n}^{z_{n+1}} \tfrac{x f_{j\eps}(x)}{x}\, dx 
\geq  \sum_{n=1}^{\infty} \int_{z_n}^{z_{n+1}} \tfrac{a}{z_{n+1}}\, dx    
\\
 & 
 = a \sum_{n=1}^{\infty} \tfrac{z_{n+1}-z_{n}}{z_{n+1}}
 = a \sum_{n=1}^{\infty} \big(1 - \tfrac{z_{n}}{z_{n+1}}\big)
 \geq a \sum_{n=1}^{\infty} \big(1 - \tfrac{1}{2}\big) = \infty, 
\end{align*}
which is in contradiction with the fact, that $f_{j\eps}$ is a density. 
\end{proof}

%
\renewcommand{\theequation}{D\arabic{equation}}
\setcounter{equation}{0}

\section{Some properties of \texorpdfstring{$\rho_{0}$}{rho0} function}
Recall the definition of~$\rho_{0}$ in~\eqref{eq: rho0} and 
for simplicity of notation put $b = (\beta - \gamma)_{+}$.  
Then for each $u_1,u_2$ satisfying $0 < u_{1} \leq u_{2} \leq \frac{1}{2}$ 
one has $\rho_{0}(u_1,u_2) = r_{0}(u_1,u_2)$, 
where   
\begin{equation*} 
r_{0}(u_1, u_2) = 
 \sqrt{\tfrac{u_{2} - u_{1}}{u_2^{2b}} + \big(\tfrac{1}{u_1^{b}}-\tfrac{1}{u_2^{b}}\big)^2 u_1}\,. 
\end{equation*}

\renewcommand{\labelenumi}{(\roman{enumi}).}

\begin{lemma} \label{lemma about rho0}
 Let $u_{0} \in (0, \frac{1}{2})$ and $b \in [0,\tfrac{1}{2})$ be fixed. Then 
the following statements hold. 
\begin{enumerate}
 \item The function $g_{R}(u) = r_{0}^{2}(u_{0},u)$ is increasing for 
 $u \in (u_{0}, \frac{1}{2})$.
 \item For $b > 0$ the function $g_{L}(u) = r_{0}^{2}(u, u_{0})$ is 
increasing on $(0,u_{*})$  and decreasing on  $(u_{*},u_{0})$, where 
$u_{*} = u_{0}\big(\frac{1-2b}{2(1-b)}\big)^{1/b}$. 
\item For each $0 \leq u_{1} < u_{2} < u_{0} \leq \tfrac{1}{2}$ 
it holds that $ r_{0}^{2}(u_2, u_{0}) \leq 2\, r_{0}^{2}(u_1, u_{0})$.  
 \item For each $\epsilon > 0$ the set 
 $U(u_{0}, \epsilon) = \big\{u \in [0,\tfrac{1}{2}]: \rho_{0}(u,u_{0}) 
\leq \epsilon \big\} $ is contained in a set $[u_{L},u_{U}]$ such that 
 $r_{0}(u_{L},u_{U}) \leq 2 \epsilon$. 
\end{enumerate}
\begin{proof}
 The proof of \textit{(i)} follows directly from the definition of the function $g$, as 
\[
 g_{R}(u) = r_{0}^{2}(u_{0},u) 
= \tfrac{u - u_{0}}{u^{2b}} 
 + \big(\tfrac{1}{u_0^{b}}-\tfrac{1}{u^{b}}\big)^2 u_{0} 
= u^{1-2b} + u_{0}^{1-2b} - \tfrac{2u_{0}^{1-b}}{u^{b}},  
\]
which is evidently an increasing function on $(u_{0},\frac{1}{2}]$.

\medskip 

For the proof of \textit{(ii)} rewrite 
\[
 g_{L}(u) = r_{0}^{2}(u,u_{0}) 
= \tfrac{u_{0} - u}{u_{0}^{2b}} 
 + \big(\tfrac{1}{u_{0}^{b}} - \tfrac{1}{u_{0}^{b}}\big)^2 u 
= u_{0}^{1-2b} + u^{1-2b} - \tfrac{2u^{1-b}}{u_{0}^{b}}.   
\]
Now it is straightforward to find that the function~$g_{L}$ has exactly  
one local maximum in the point $u_{*}$ and meets the claimed properties.  

\medskip 

Now we show \textit{(iii)}. Note that thanks to \textit{(ii)} 
the function~$g_{L}(u)$ is decreasing on $(u_{*},u_{0})$, 
thus the inequality trivially holds if $u_{*}\leq u_1 < u_2$. 
 
Thus suppose that $u_{1} < u_{*}$. From \textit{(ii)} 
we further know that $u_{*} = u_{0}\,a$, where $a < 1$. 
Thus we can bound 
\begin{align*}
 r_{0}^{2}\big(u_2, u_{0}\big)  
 &\leq 
 r_{0}^{2}\big(u_{*}, u_{0}\big) 
 = u_{0}^{1-2b}\big[1 - a  
+ (1-a^{b})^2 a^{1-2b}\big]
\\
 &\leq 2\,u_{0}^{1-2b} 
 = 2 r_{0}^{2}\big(0, u_{0}\big) \leq 2 r_{0}^{2}\big(u_{1}, u_{0}\big), 
\end{align*}
which was to be proved.

\medskip 

To prove \textit{(iv)} first note that from \textit{(i)} there 
exists $u_{U}$ such that 
\[
 \big\{u \in [u_{0},\tfrac{1}{2}]: \rho_{0}(u,u_{0}) \leq \epsilon \big\} 
 = [u_{0},u_{U}] 
 \quad \text{and} \quad  \rho_{0}(u,u_{U}) \leq \epsilon. 
\]
When searching for $u_{L}$ one has to be more careful as the function 
$g_{L}$ is not decreasing on $(0,u_{0})$. We need to distinguish two 
cases. First, let $\epsilon < r_{0}(0,u_{0})$. Then one can find $u_{L}$ in a similar 
way as $u_{U}$ was found. Second, suppose that $\epsilon \geq  r_{0}(0,u_{0})$. Then we take simply $u_{L}=0$. 

Now it remains to check that $r_{0}(u_{L},u_{U}) \leq 2 \epsilon$. To do that 
bound 
\begin{align*}
 r_{0}^{2}(u_{L},u_{U}) &=  \tfrac{u_{U} - u_{L}}{u_{U}^{2b}} 
 + \big(\tfrac{1}{u_{L}^{b}} - \tfrac{1}{u_{U}^{b}}\big)^2 u_{L} 
\\
&\leq  \tfrac{u_{U} - u_{0}}{u_{U}^{2b}} 
 +  \tfrac{u_{0}-u_{L}}{u_{0}^{2b}} 
 +  2 \big(\tfrac{1}{u_{L}^{b}} - \tfrac{1}{u_{0}^{b}}\big)^2 u_{L} 
 + 2 \big(\tfrac{1}{u_{0}^{b}} - \tfrac{1}{u_{U}^{b}}\big)^2 u_{0} 
\\
& \leq 2\,r_{0}^{2}(u_{L},u_{0}) + 2\,r_{0}^{2}(u_{0},u_{U}) \leq 4 \epsilon^{2}. 
\end{align*}

%

\end{proof}

\end{lemma}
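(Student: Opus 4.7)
The plan is to rely on the algebraic identity obtained by expanding the square in the definition of $r_0^2$: for $0 \leq u_1 \leq u_2 \leq \tfrac{1}{2}$ one has
\[
 r_0^2(u_1, u_2) = u_1^{1-2b} + u_2^{1-2b} - 2\, u_1^{1-b}\, u_2^{-b},
\]
a form in which both arguments enter only through pure powers. This expansion, a short and routine calculation, makes the subsequent differentiation arguments transparent; in particular the boundary value $r_0^2(0,u_0) = u_0^{1-2b}$ is recovered by continuity (using $1-2b > 0$).

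For \textit{(i)} I would substitute $u_1 = u_0$ and differentiate in $u$, obtaining $g_R'(u) = (1-2b)\, u^{-2b} + 2b\, u_0^{1-b}\, u^{-b-1}$; since $b \in [0, 1/2)$ both summands are nonnegative and the first is strictly positive, hence $g_R$ is strictly increasing. For \textit{(ii)} I would substitute $u_2 = u_0$ and differentiate in $u$, getting
\[
 g_L'(u) = u^{-2b}\bigl[(1-2b) - 2(1-b)\,(u/u_0)^b\bigr].
\]
Setting the bracket to zero solves to $u_*^{\,b} = \tfrac{(1-2b) u_0^b}{2(1-b)}$, exactly the announced critical point; the sign of the bracket changes from positive to negative across $u_*$, which yields the claimed monotonicity.

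For \textit{(iii)} I would distinguish two cases. If $u_* \leq u_1$, monotonicity from (ii) yields $g_L(u_2) \leq g_L(u_1)$ directly. If $u_1 < u_*$, I would first note that from the expansion $g_L(0) = u_0^{1-2b}$, so monotonicity of $g_L$ on $(0, u_*)$ gives $g_L(u_1) \geq u_0^{1-2b}$; then
\[
 r_0^2(u_2, u_0) \leq g_L(u_*) = u_0^{1-2b}\bigl[(1-a) + (1-a^b)^2\, a^{1-2b}\bigr]
\]
with $a = (\tfrac{1-2b}{2(1-b)})^{1/b} \in (0,1)$. Each of the three factors $(1-a)$, $(1-a^b)^2$, $a^{1-2b}$ lies in $(0,1)$, so the bracket is $< 2$ and I conclude $r_0^2(u_2, u_0) \leq 2\, u_0^{1-2b} \leq 2\, g_L(u_1) = 2\, r_0^2(u_1, u_0)$.

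For \textit{(iv)} I would take $u_L = \inf U(u_0,\epsilon)$ and $u_U = \sup U(u_0,\epsilon)$; since $u_0 \in U(u_0,\epsilon)$ one has $u_L \leq u_0 \leq u_U$, and continuity of $r_0(\cdot, u_0)$ (including at $0$, via the expansion) forces $r_0(u_L, u_0) \leq \epsilon$ and $r_0(u_0, u_U) \leq \epsilon$. The decisive step is the splitting
\[
 r_0^2(u_L, u_U) \leq 2\, r_0^2(u_L, u_0) + 2\, r_0^2(u_0, u_U),
\]
obtained from $u_U^{2b} \geq u_0^{2b}$ (giving $\tfrac{u_U - u_L}{u_U^{2b}} \leq \tfrac{u_U - u_0}{u_U^{2b}} + \tfrac{u_0 - u_L}{u_0^{2b}}$) combined with $(x+y)^2 \leq 2x^2 + 2y^2$ applied to $\bigl(\tfrac{1}{u_L^b} - \tfrac{1}{u_U^b}\bigr)^2 = \bigl(\tfrac{1}{u_L^b} - \tfrac{1}{u_0^b} + \tfrac{1}{u_0^b} - \tfrac{1}{u_U^b}\bigr)^2$ and the inequality $u_L \leq u_0$. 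Plugging the bounds yields $r_0^2(u_L, u_U) \leq 4\epsilon^2$. The main obstacle is not conceptual but notational: one must keep careful track of which of $u_L, u_0, u_U$ dominates in each summand so that the inequalities all point the same way.
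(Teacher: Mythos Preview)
Your proposal is correct and follows essentially the same approach as the paper: the same algebraic expansion $r_0^2(u_1,u_2)=u_1^{1-2b}+u_2^{1-2b}-2u_1^{1-b}u_2^{-b}$, the same derivative analysis for (i)--(ii), the same case split and bound via $g_L(u_*)\le 2u_0^{1-2b}=2g_L(0)$ for (iii), and the same splitting inequality $r_0^2(u_L,u_U)\le 2r_0^2(u_L,u_0)+2r_0^2(u_0,u_U)$ for (iv). Your treatment of (iv) via $u_L=\inf U(u_0,\epsilon)$, $u_U=\sup U(u_0,\epsilon)$ and closedness of the sublevel set is in fact slightly cleaner than the paper's case distinction on whether $\epsilon<r_0(0,u_0)$.
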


\bibliographystyle{apalike}
\bibliography{short,ReferencesU}

\end{document}